\documentclass[11pt]{article}

\usepackage[a4paper, left=2.5cm,right=2.5cm, top=2.5cm, bottom =3cm]{geometry} 

\usepackage{amsmath,amssymb,amsthm, comment, array, colonequals}
\usepackage{parskip}
\usepackage{multirow}
\usepackage[all]{xy}

\usepackage[cal=boondoxo]{mathalpha} %For the fancy lowercase u_i
\DeclareMathAlphabet{\mathcalcm}{OMS}{cmsy}{m}{n}
\newcommand{\cI}{\mathcalcm{I}}

\usepackage{setspace}
\usepackage{mathtools}
\usepackage{tikz-cd}
\usetikzlibrary{calc}

\usepackage[colorlinks=true, linkcolor=blue, citecolor=blue, urlcolor=blue,
pagebackref=true]{hyperref}

\newtheorem{theorem}{Theorem}
\newtheorem{proposition}[theorem]{Proposition}
\newtheorem{corollary}[theorem]{Corollary}
\newtheorem{lemma}[theorem]{Lemma}

\newtheorem{conjecture}[theorem]{Conjecture}
\newtheorem*{definition*}{Definition}

\theoremstyle{remark}
\newtheorem{remark}[theorem]{Remark}
\newtheorem{example}[theorem]{Example}
\newtheorem{question}[theorem]{Question}

\def\A{\mathbb{A}}
\def\P{\mathbb{P}}
\def\Z{\mathbb{Z}}

\def\R{\mathbb{R}}
\def\C{\mathbb{C}}
\def\Q{\mathbb{Q}}

\def\F{\mathbb{F}}

\DeclareMathOperator{\Hom}{Hom}
\DeclareMathOperator{\Spec}{Spec}
\DeclareMathOperator{\SL}{SL}
\DeclareMathOperator{\GL}{GL}
\DeclareMathOperator{\SO}{SO}
\DeclareMathOperator{\OO}{O}
\DeclareMathOperator{\Sp}{Sp}

\DeclareMathOperator{\rank}{rank}
\DeclareMathOperator{\spn}{span}
\DeclareMathOperator{\diag}{diag}

\DeclareMathOperator{\Alt}{Alt}
\DeclareMathOperator{\End}{End}

\def\l({\left(}
\def\r){\right)}

\newcommand{\defi}[1]{\textsf{#1}} % for defined terms

\newcommand{\Gr}{\operatorname{Gr}}

\begin{document}

\title{Sharp bounds for frame counts and setwise stabilizers in classical groups}

\author{Kaloyan Slavov}

\maketitle

\begingroup
\renewcommand{\thefootnote}{}
\footnotetext{%
2020 Mathematics Subject Classification. Primary
20G15; % Linear algebraic groups over arbitrary fields
Secondary 
14L35, % Classical groups, algebro-geometric aspects. 
05E18, % Group actions on combinatorial structures
20B25. % Finite automorphism groups of algebraic, geometric, or combinatorial structures
\\
\makebox[1.8em]{}\textit{Key words and phrases.} frames, classical algebraic groups, stabilizers, finite sets, Brascamp--Lieb inequalities.}
\endgroup

\begin{abstract}
Let $k$ be a field, let $V=k^n$, and let $G$ be a classical group acting on $V$. For a finite subset $E\subset V$ and a basis $u=(u_1,\dots,u_n)$ of $V$, we study the set of $G$-frames of type $u$ contained in $E$, or, equivalently, the set $T_{E,u}^G:=\{g\in G(k)\ |\ g u_i\in E\text{ for each $i$}\}$. In the cases described below, we prove estimates of the form
$|T_{E,u}^G|\ll_n |E|^\alpha$ that are uniform over all fields, with sharp exponents in this uniform setting. For $G=\SL_n$, the uniform sharp exponent is $n-1/n$. For orthogonal groups in dimensions $n=2,3$, with $\operatorname{char}(k)\neq 2$, the uniform sharp exponent is $n/2$.  

We propose an
algebro-geometric Brascamp--Lieb inequality which would lead to the
orthogonal exponent $n/2$ in all dimensions. 

For the related setwise stabilizer
$R_E^G$ of $E$ in $G(k)$, where
$E\subset V$ is finite and spans
$V$, we also prove the sharp characteristic-zero bound
$|R_E^G|\ll_n |E|^{\operatorname{rank}G}$ for the
special linear, orthogonal, and symplectic groups, where
$\operatorname{rank}G$ denotes the absolute rank.
\end{abstract}

\section{Introduction}

Let $k$ be a field, and let $V=k^n$.  We consider $G$ among the following classical groups acting on $V$:
\[
\GL_n,\qquad \SL_n,\qquad \OO(V,B),\qquad \SO(V,B),\qquad \Sp(V,\omega)
\]
(the orthogonal groups are defined when $\operatorname{char}(k)\neq 2$ with respect to a non-degenerate symmetric bilinear form $B$ on $V$, and the symplectic group is defined when $n$ is even with respect to a non-degenerate alternating bilinear form $\omega$). 

Let $u=(u_1,\dots,u_n)$ be an ordered basis of $V$. 
A \defi{$G$-frame} of type $u$ is an ordered basis $(x_1,\dots,x_n)$ of $V$ such that there exists a $g\in G(k)$ with $x_i=gu_i$ for all $i=1,\dots, n$. 

For example, an $\SL_n$-frame of type $u$ is an ordered tuple $(x_1,\dots,x_n)$ of elements in $V$ such that $\det(x_1,\dots,x_n)=\tau$, where $\tau=\det(u_1,\dots,u_n)$. An $\OO(V,B)$-frame is an ordered tuple $(x_1,\dots,x_n)$ of elements in $V$ whose Gram matrix $B(x_i,x_j)_{1\leq i,j\leq n}$ equals the Gram matrix $B(u_i,u_j)_{1\leq i,j\leq n}$ of $u_1,\dots,u_n$. 

Given a finite subset $E\subset V$ and $G$ among the classical groups listed above, we are interested in $G$-frames $(x_1,\dots,x_n)$ of type $u$ with $x_i\in E$ for every $i=1,\dots,n$. The set of such frames is in bijection with 
\[T_{E,u}^G:=\{g\in G(k)\ |\ g u_i\in E\text{ for each $i$}\}.\]
There is an injective map
$T_{E,u}^G \longrightarrow E^n$ given by
$g \longmapsto (g u_1,\dots,g u_n)$; therefore,
$T_{E,u}^G$ is finite, and we have the trivial bound
\begin{equation}
	|T_{E,u}^G|\leq |E|^n.
	\label{eq:trivial_bound}
\end{equation}

In this paper, $X\ll Y$ means $X\leq CY$ for a constant $C$; a subscript indicates what the constant is allowed to depend on. Define $X\gg Y$ similarly. Also, $X\asymp Y$ means $X\ll Y$ and $X\gg Y$. 

We ask the following
\begin{question}
	\label{Que:size_frame}
How large can $|T_{E,u}^G|$ be in terms of $|E|$?
Concretely, what is the smallest exponent $\alpha$ such that 
\[|T_{E,u}^G|\ll_n |E|^\alpha?\] In other words, how much power saving from the trivial bound \eqref{eq:trivial_bound} can we gain? 
\end{question}

The motivation for the current paper is a recent work of  Pham, Le Quang-Hung and Slavov \cite{Vietnam_BLMS}. For a finite field $k$ and a subset $E\subset k^2$ that spans $k^2$, they bound the size of the setwise stabilizer $R_E$ of $E$ in $\SL_2(k)$. Namely, they establish $|R_E|\ll |E|^{3/2}$, with the exponent $3/2$ being sharp. 
They ask for higher-dimensional analogues.

The present paper addresses this question in a broader setting. We work over an arbitrary field, treat other classical groups as well, and pass from setwise stabilizers to the associated frame-counting problem described above. The comparison between the two problems is recorded in
\eqref{eq:comparison_R_E_T_E}. See also Remark \ref{Rem:comparison_R_E_versus_T_E_level_sets} for the comparison explicitly in the case of $\SL_n$.

For $G=\GL_n$, the bound (\ref{eq:trivial_bound}) is sharp (in every characteristic) up to a multiplicative constant depending only on $n$. See Example \ref{Exa:sharp_GL_n}. All examples in the paper are assembled in Section \ref{Sec:examples}. However, the exponent in \eqref{eq:trivial_bound} can be improved for the other groups that we treat. 

The table below summarizes the exponents that arise in the present paper.
In each row, the listed exponent cannot be
replaced by any smaller exponent in a bound valid across all fields under
consideration. In the upper-bound column, $\checkmark$ means that the upper
bound for $T_{E,u}^G$ with the indicated exponent is proved unconditionally, $\dagger$ means that it is conditional
on Conjecture \ref{Conj:Brascamp_Lieb}, and $?$ means that it is
conjectural. We always assume $\operatorname{char}(k)\neq 2$ when
$G$ is $\OO(V,B)$ or $\SO(V,B)$.

\begin{table}[htbp]
\centering
	\begingroup
	\renewcommand{\arraystretch}{1.35}
	\begin{tabular}{|c|c|c|c|}
		\hline
		$G$
		&
	exponent
		&
		upper bound status
		&
lower-bound	example
		\\ \hline
		
		$\mathrm{GL}_n$
		& $n$
		& $\checkmark$ trivial bound \eqref{eq:trivial_bound}
		& Example \ref{Exa:sharp_GL_n}
		\\ \hline
		
		$\mathrm{SL}_n$
		& $n-\dfrac{1}{n}$
		& $\checkmark$ Theorem \ref{Thm:main_high_dimensions}
		& Example \ref{Exa:positive_char_SL_n}
		\\ \hline
		
		$\begin{array}{c}
			\mathrm{O}(V,B)\\
			\mathrm{SO}(V,B)
		\end{array}$
		& $n/2$
		&
		\begin{tabular}{@{}l@{}}
			$\checkmark$ $n=2,3$, Theorem \ref{Thm_orthogonal_cases_n_2_3}\\
			$\dagger$ $n\geq 4$, Corollary \ref{Cor:General_case_orthogonal}
		\end{tabular}
		& Example \ref{Exa:positive_char_SO_n}
		\\ \hline
		
		$\mathrm{Sp}(V,\omega)$
		& $3n/4$
		&
		\begin{tabular}{@{}l@{}}
			$\checkmark$ $n=2$, Theorem \ref{Thm:main_high_dimensions}\\
			$?$ $n\geq 4$, Conjecture \ref{Conj_symplectic}
		\end{tabular}
		& Example \ref{Exa:sharpness_symplectic}
		\\ \hline
	\end{tabular}
	\endgroup
	\caption{Exponents forced by examples and status of the corresponding upper bounds}
		\label{Tab:summary_results}	
\end{table}

\begin{remark}
One can read the entries in the above table in terms of the power savings from the exponent $n$ in the trivial bound \eqref{eq:trivial_bound}. For example, in the case of $\SL_n$, the power saving is only $1/n$, while in the case of the orthogonal groups,
there is a substantial power saving of $n/2$.   
\end{remark}

\subsection{The special linear group}
\label{Sec:Intro_SL_n}

The result below gives the upper bound in the case $G=\SL_n$. 

\begin{theorem}
\label{Thm:main_high_dimensions}
Let $k$ be a field. Let $E\subset k^n$ be a finite subset. Let $u=(u_1,\dots,u_n)$ be a basis of $V$. Then
\[
|T_{E,u}^{\SL_n}|\ll_n |E|^{n-1/n}.
\]
\end{theorem}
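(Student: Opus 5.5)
The plan is to reduce the count to incidences between points of $E$ and affine hyperplanes missing the origin, and then balance two estimates. Write $\tau=\det(u_1,\dots,u_n)\neq 0$. Then $|T_{E,u}^{\SL_n}|$ equals the number $N$ of tuples $(x_1,\dots,x_n)\in E^n$ with $\det(x_1,\dots,x_n)=\tau$.

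Fix the first $n-1$ vectors. They must be linearly independent, and the condition on $x_n$ becomes $\varphi(x_n)=\tau$ for the nonzero linear form $\varphi=\det(x_1,\dots,x_{n-1},\cdot)$. So $x_n$ lies in an affine hyperplane $H$ not through $0$, and $H$ is parallel to $W=\spn(x_1,\dots,x_{n-1})$. Set
\[
M:=\max\{|E\cap H| : H \text{ an affine hyperplane with } 0\notin H\}.
\]
This gives the first bound $N\leq |E|^{n-1}M$.

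The second bound should improve as $M$ grows, ideally of the shape $N\ll_n |E|^{n}M^{-1/(n-1)}$. Balancing the two at $M=|E|^{(n-1)/n}$ gives exactly
\[
N\ll_n |E|^{\,n-1+(n-1)/n}=|E|^{\,n-1/n},
\]
which is the claimed exponent. So the whole proof is organised around establishing some version of this second estimate. Most likely it will not hold globally, but only after splitting the tuples dyadically according to the richness $a=|E\cap H|$ of the hyperplane containing $x_n$.

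For the dyadic version, let $N_a$ count the tuples for which $|E\cap H|\in[a,2a)$. Group these tuples by the linear hyperplane $W$ spanned by $x_1,\dots,x_{n-1}$. The tuples $(x_1,\dots,x_{n-1})\in (E\cap W)^{n-1}$ with a prescribed volume (relative to a volume form on $W$ induced by $\varphi$) are exactly frames of $\SL_{n-1}$ on $W$. Hence the inductive hypothesis of the theorem in dimension $n-1$ bounds each level by $\ll |E\cap W|^{\,n-1-1/(n-1)}$. The base case $n=1$ is trivial, with $N\le 1$.

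On the other side, each rich parallel class is controlled in two ways:
\[
\sum_{c\neq 0}|E\cap\varphi^{-1}(c)|\leq |E|,
\]
so a given $W$ has at most $|E|/a$ levels $c$ with $|E\cap \varphi^{-1}(c)|\geq a$. Combining the inductive level bound, this count of rich levels, and the trivial bound $\sum_W |E\cap W|^{n-1}\leq |E|^{n-1}$ (over $W$ spanned by points of $E$) via Hölder, I expect to get $N_a\ll |E|^{n-1}\min(a, |E|a^{-1/(n-1)})$ up to constants. Summing over dyadic $a$ then yields the result.

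The main obstacle is precisely this second estimate. Over an arbitrary field no Szemerédi--Trotter type incidence bound is available, so everything must come from the algebraic structure. That means using the induction on $n$ for the $\SL_{n-1}$-frames inside $W$, together with the fact that points of $E$ lying in distinct parallel translates of $W$ are disjoint. If the Hölder bookkeeping across different $W$ does not close, the first alternative is to exploit symmetry in the coordinates. Each $x_i$ lies in the affine hyperplane determined by the others, so one can choose the index $i$ for which that hyperplane is poorest and apply the first bound with that index. The sharpness Example \ref{Exa:positive_char_SL_n} indicates the extremal configuration: $E$ is spread over about $|E|^{1/n}$ parallel rich hyperplanes, each containing about $|E|^{(n-1)/n}$ points. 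That is the regime in which the two bounds must meet.
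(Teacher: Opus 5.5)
Your reduction and the first bound $N\le |E|^{n-1}M$ are correct. The dyadic balancing would also give the exponent $n-1/n$ if you had $N_a\ll_n |E|^{n}a^{-1/(n-1)}$ for $a\ge |E|^{(n-1)/n}$. But that estimate is the whole content of the theorem in the critical range, and the ingredients you list cannot produce it.

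Fix a linear hyperplane $W$. It has at most $|E|/a$ rich levels. Each contributes fewer than $2a$ choices of $x_n$ and, by induction, $\ll |E\cap W|^{n-1-1/(n-1)}$ choices of $(x_1,\dots,x_{n-1})$. The two factors of $a$ cancel, leaving $N_a\ll |E|\sum_W |E\cap W|^{n-1-1/(n-1)}$. The only way this chain can yield $a^{-1/(n-1)}$ is a lower bound $|E\cap W|\gtrsim a$, and nothing gives that. Richness of the translate $x_n+W$ says nothing about $W$ itself, which may contain only $n-1$ points of $E$. So H\"older gives at best $N_a\ll |E|^n$.

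Moreover, the ``trivial'' bound $\sum_W|E\cap W|^{n-1}\le |E|^{n-1}$ is false. In $k^3$, take $m$ nonzero points on a line through $0$ and $m$ generic points. The $m$ planes spanned by the line and one generic point each contain $m+1$ points of $E$, so the sum is at least $m^3$, while $|E|^2=4m^2$.

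\textbf{The missing idea.} You need an incidence input bounding how many directions $W$ can carry both a rich translate and many frames.
\begin{itemize}
\item For $n=2$ this is the pair count: for $a\ge 2$, at most $\binom{|E|}{2}/\binom{a}{2}$ lines contain $a$ or more points. That gives $N_a\ll |E|^2/a$, but it is not among your ingredients.
\item For $n\ge 3$ the naive analogue (few hyperplanes contain many points) fails over arbitrary fields. Points concentrated on a lower-dimensional affine flat lie on a whole pencil of rich hyperplanes.
\item Such configurations genuinely inflate the count. If $x_n$ runs over an affine line $c+kb$ and $b\in\spn(x_1,\dots,x_{n-1})$, then $\det(x_1,\dots,x_{n-1},x_n)$ does not depend on the point of the line.
\end{itemize}

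Controlling exactly this is the heart of the paper's proof. It strengthens the statement to a weighted count with auxiliary fixed independent vectors $p_1,\dots,p_r$ and an extra condition $u\in\spn(x_1,\dots,x_q)$. It then splits according to whether some affine line has weight exceeding $T^{1/d}\mu_{\max}^{(d-1)/d}$, where $T$ is the total weight and $\mu_{\max}$ the maximal point weight.
\begin{itemize}
\item If no line is that heavy, the proof fixes $x_1$ and passes to $V/\spn(x_1)$. Fibres there are lines, so the induced point weights stay below the threshold, and the $(d-1)$-dimensional weighted bound applies. The factor $\mu_{\max}^{1/d}$ is what makes the induction on dimension close.
\item If such a line exists, it is removed and handled by induction on total weight. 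Tuples meeting the line either have $x_1$ pinned, because all conditions are affine-linear along the line, or force $u$ or the line's direction into $\spn(x_2,\dots,x_q)$. The latter is the $(d,r+1)$ case.
\end{itemize}

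Your proposal has no counterpart of either the weights or the rich-line pruning, so as written it does not prove the theorem. Your fallback of choosing the poorest facet hyperplane only relocates the same missing estimate.
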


Setting $\tau:=\det(u_1,\dots,u_n)$, Theorem \ref{Thm:main_high_dimensions} is equivalent to the following

\begin{theorem}\label{Thm:SL_n_statement_frames}
Let $k$ be a field. Let $E\subset k^n$ be a finite subset. Let $\tau\in k^*$. Then
\[\#\{(x_1,\dots,x_n)\in E^n\ |\ \det(x_1,\dots,x_n)=\tau\}\ll_n |E|^{n-1/n}.\]	
\end{theorem}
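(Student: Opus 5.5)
Write $N:=|E|$ and
\[
S:=\#\{(x_1,\dots,x_n)\in E^n\ |\ \det(x_1,\dots,x_n)=\tau\}.
\]
The plan is to combine two bounds, each good in a different regime, and optimize according to how concentrated $E$ is on hyperplanes through the origin. Let
\[
M:=\max_{H}|E\cap H|,
\]
where $H$ ranges over linear hyperplanes of $k^n$.

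\emph{Bound A.} Fix $(x_1,\dots,x_{n-1})$ linearly independent. The condition $\det(x_1,\dots,x_{n-1},x_n)=\tau$ confines $x_n$ to an affine hyperplane parallel to $H_0=\spn(x_1,\dots,x_{n-1})$ and not through the origin. The number of such $x_n\in E$ is at most the size of $E$ on one translate of $H_0$. To control translates rather than linear hyperplanes, we will use the dilation trick: if $x_n$ and $x_n'$ both lie on the translate, then $x_n-x_n'\in H_0$. Cleaner is to define $M$ over all affine hyperplanes instead. With that definition we get $S\le N^{n-1}M$.

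\emph{Bound B.} This bound exploits that $E$ cannot be concentrated on a hyperplane $H$ and still produce many frames. Take an affine hyperplane $H$ with $|E\cap H|=M$, and partition $E$ into the parallel translates $H_c=H+c v$ of $H$. Every frame with $\det=\tau\neq0$ spans $V$, so it must contain a vector outside $H_0$ (the linear hyperplane parallel to $H$). The hope is an induction on $n$: group the frames according to how many coordinates lie in each translate, and apply the $(n-1)$-dimensional theorem inside the quotient/translate structure. Concretely, the map $V\to V/H_0\cong k$ sends each $x_i$ to a scalar $c_i$, and $\det(x_1,\dots,x_n)$ expands as a linear combination of $(n-1)$-dimensional determinants of the projections to $H_0$. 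Choosing the index $i$ with $c_i\neq0$ and performing a column operation reduces to an $(n-1)$-dimensional determinant condition on the projected vectors.

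\emph{Combining.} The extremal example (Example \ref{Exa:positive_char_SL_n}) presumably has $E$ a product-like set over a subfield of size roughly $N^{1/n}$, with hyperplane sections of size $N^{1-1/n}$. This suggests the target inequality $S\ll N^{n-1}\cdot N^{1-1/n}$ in the spread regime $M\le N^{1-1/n}$, which Bound A gives directly. In the concentrated regime $M>N^{1-1/n}$, we need a separate argument showing $S\ll N^{n-1/n}$. There, the number of translates of $H$ meeting $E$ substantially is at most $N/M<N^{1/n}$. The idea is to iterate: pick the index set of frame vectors lying in $H$ itself versus off it, and bound recursively using induction on the dimension applied to $E\cap H$ viewed inside an $(n-1)$-dimensional affine space.

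The main obstacle is precisely this concentrated case. A naive recursion loses factors, so the bookkeeping of the exponents will need care. I would first attempt an entropy or Hölder-type argument, namely a Loomis--Whitney style inequality for the determinant map, treating the counts along each translate as weights. I expect the exponent $n-1/n$ to emerge from balancing $N^{n-1}M$ against a term of the form $N^{n}/M^{1/(n-1)}$.
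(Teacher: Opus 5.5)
Your Bound A is correct. With $M$ the maximal number of points of $E$ on an affine hyperplane, fixing independent $x_1,\dots,x_{n-1}$ confines $x_n$ to one affine hyperplane, so $S\le N^{n-1}M$, which settles the regime $M\le N^{1-1/n}$. But the concentrated regime is the whole content of the theorem, and there you give no argument. Moreover, the second bound you plan to balance against cannot hold in general. Take $k=\overline{\mathbb{F}_p}$ and $E=\mathbb{F}_q^n\cup P$, where $P$ is any set of $q^n$ points on the affine hyperplane $\{x_n=t\}$ with $t\notin\mathbb{F}_q$. Then $N=2q^n$ and $M\ge q^n$. With $\tau=1$ we get $S\ge|\SL_n(\mathbb{F}_q)|\asymp q^{n^2-1}$, while $N^n/M^{1/(n-1)}\ll_n q^{n^2-n/(n-1)}$ is smaller by a factor $q^{1/(n-1)}$. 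So a heavy hyperplane forces no saving by itself: the frames here avoid it entirely. Likewise, ``induction on $E\cap H$'' says nothing about frames lying only partly in $H$, or off it.

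Your Bound B hides the essential difficulty. The column operation $x_i\mapsto x_i-(c_i/c_1)x_1$ is the projection $V\to V/\spn(x_1)$. It identifies all points of $E$ on a common line parallel to $x_1$, so the projected set is a multiset, and the unweighted $(n-1)$-dimensional theorem cannot be applied to it.

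The paper's proof is built around exactly this issue. It proves a stronger weighted statement, $N_{d,r}\le C_{d,r}W^{q-1/d}M^{1/d}$, where $W$ is the total weight and $M$ now denotes the maximal point weight (not your hyperplane count). It then splits according to whether some affine \emph{line}, not hyperplane, has weight exceeding $W^{1/d}M^{(d-1)/d}$; for the original unweighted set this means more than $N^{1/n}$ points.

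If no line is rich, one fixes $x_1=a$ and projects along $a$. The quotient weights are then at most $W^{1/d}M^{(d-1)/d}$, and the $(d-1)$-dimensional weighted bound gives exactly $W^{q-1-1/d}M^{1/d}$ per choice of $a$. If a rich line $L$ exists, it is deleted and one uses strong induction on $W$. The tuples meeting $L$ are controlled by a lemma: when the determinant and span conditions hold identically along $L$, the remaining vectors span either $u$ or the direction of $L$. This is why the proposition carries fixed independent vectors $p_1,\dots,p_r$ and the span condition $u\in\spn(x_1,\dots,x_q)$, with descending induction on $r$.

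Your outline has no counterpart to the weighted hypothesis or to this pruning mechanism, so as it stands it does not reach the exponent $n-1/n$.
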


See Section \ref{Sec:SL_n} for the proof.  

Example \ref{Exa:positive_char_SL_n} shows that the exponent $n-1/n$ is sharp, in the sense that it cannot be replaced by any smaller exponent in a bound valid across all fields. In fact, the example shows that the exponent is sharp for every fixed algebraically closed field of positive characteristic. However, the exponent is not sharp when $k=\R$; see \cite[Section~5.2]{Do}.

\subsection{Orthogonal groups in dimensions $2$ and $3$} 
\label{Sec:Intro_orthogonal_n_2_3}

In this section $\operatorname{char} k\ne 2$ and $\OO(V,B)$ is the orthogonal group of a fixed non-degenerate symmetric bilinear form $B$ on $V$. 

The following result treats the cases $n=2,3$. 

\begin{theorem}\label{Thm_orthogonal_cases_n_2_3}
Let $n\in\{2,3\}$. 	
Let $k$ be a field with $\operatorname{char}(k)\neq 2$, and let $V=k^n$. Let $B$ be a non-degenerate symmetric bilinear form on $V$. Let $G$ be one of the groups $\OO(V,B)$ or $\SO(V,B)$. Let $u=(u_1,\dots,u_n)$ be a basis of $V$, and let $E\subset k^n$ be a finite subset. Then
\[
|T_{E,u}^{G}|\ll_n |E|^{n/2}.
\] 
\end{theorem}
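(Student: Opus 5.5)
We bound $|T_{E,u}^G|$ by counting tuples $(x_1,\dots,x_n)\in E^n$ whose Gram matrix equals the fixed matrix $M=(B(u_i,u_j))_{i,j}$. Since $\SO(V,B)(k)\subseteq\OO(V,B)(k)$, it suffices to treat $G=\OO(V,B)$. The basic input is an incidence count. For fixed $y\in V$ and $c\in k$, the set $\{x: B(x,y)=c\}$ is an affine hyperplane when $y\neq0$. For fixed $q\in k$, the set $\{x: B(x,x)=q\}$ is a quadric. Intersecting such constraints cuts down the candidates for each $x_i$, and we will exploit this by organizing the count so that each step costs at most about $|E|^{1/2}$ on average.

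\emph{Case $n=2$.} Fix $x_1\in E$ with $B(x_1,x_1)=M_{11}$. Then $x_2$ must satisfy $B(x_1,x_2)=M_{12}$ and $B(x_2,x_2)=M_{22}$. This is a line meeting a nondegenerate conic $\{B(x,x)=M_{22}\}$; the case $M_{22}=0$ is the isotropic cone, a union of at most two lines. So there are at most $2$ solutions unless the line lies inside the conic. That happens only in the degenerate situation where the line is an isotropic line; but then $x_1$ is orthogonal to that isotropic direction, forcing specific relations that cannot hold since $M$ is nondegenerate. Hence the count is at most $2|E|$, which is worse than the claimed $|E|$. The fix is that once $x_1$ is chosen, $x_2$ is determined up to the finite stabilizer, so $g$ is determined by $x_1$ up to $|\mathrm{Stab}|\le 2$. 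A cleaner way to say this: $T\ni g\mapsto gu_1\in E$ has fibers of size at most $|\{h\in\OO: hu_1=u_1\}|\le 2$ when $u_1$ is anisotropic. If $u_1$ is isotropic, use $u_1+u_2$ or another anisotropic combination; in dimension $2$ one of $u_1$, $u_2$ works after a further Cauchy--Schwarz-free argument, choosing whichever $u_i$ is anisotropic. If both are isotropic, $u_1,u_2$ form a hyperbolic pair, and $g$ is determined by $gu_1$ up to scaling-type ambiguity that is fixed by $gu_2\in E$ with $B(gu_1,gu_2)=M_{12}$, which is at most one point on a line. So $|T|\ll|E|$.

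\emph{Case $n=3$.} This needs genuine saving: we want $|E|^{3/2}$, not $|E|^2$. Count pairs $(x_1,x_2)\in E^2$ with prescribed Gram block. Given $(x_1,x_2)$ independent, $x_3$ lies on the intersection of two hyperplanes (a line) with a quadric, giving at most $2$ choices; if the line lies inside the quadric, argue that this contradicts nondegeneracy as above. So it suffices to show
\[
N:=\#\{(x_1,x_2)\in E^2: B(x_1,x_1)=a,\ B(x_2,x_2)=b,\ B(x_1,x_2)=c\}\ll|E|^{3/2}.
\]
Set $A=E\cap\{B(x,x)=a\}$ and $A'=E\cap\{B(x,x)=b\}$; these are points on quadric surfaces. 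For each $x_1$, the admissible $x_2$ lie on the conic $C_{x_1}=\{B(x,x)=b,\ B(x_1,x)=c\}$. So $N$ is an incidence count between $|E|$ points and $|E|$ conics on a quadric surface. Two distinct conics $C_{x_1},C_{x_1'}$ meet in at most $2$ points, because the planes $B(x_1,\cdot)=c$ and $B(x_1',\cdot)=c$ differ when $x_1\neq x_1'$ and $c\neq0$, giving a line meeting the quadric. This incidence structure contains no $K_{2,3}$, so the Kővári--Sós--Turán bound gives $N\ll|E|\,|E|^{1/2}+|E|\ll|E|^{3/2}$. The case $c=0$ requires handling $x_1'=-x_1$-type coincidences, where the planes coincide only for proportional $x_1$. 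Proportional points on the quadric number at most $2$ per line, so each conic is repeated at most a bounded number of times and the bound survives.

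The main obstacle is the degenerate configurations: isotropic lines contained in the quadric (the quadric surface over $k$ may be ruled), and conics that degenerate into pairs of lines. In those cases the intersection counts above can fail. I would handle this by first choosing the ordering of the basis, or replacing $u$ by a suitable basis of anisotropic vectors, which changes $T$ only through bounded reindexing. I would then verify directly that the rulings force relations incompatible with the nondegenerate Gram matrix, so that the "at most $2$ common points" property holds outside a set contributing $O(|E|^{3/2})$.
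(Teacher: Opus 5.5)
\paragraph{Case $n=2$.} Your argument is fine as soon as you reach the bound $2|E|$. That bound is already $\ll|E|$, and it is exactly the paper's argument, so the later detour through stabilizers and isotropic vectors is unnecessary.

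\paragraph{Case $n=3$: the gap.} Your scheme is to count pairs $(x_1,x_2)$ with a prescribed $2\times2$ Gram block via K\H{o}v\'ari--S\'os--Tur\'an, and then allow at most $2$ choices of $x_3$. This only works when that block $\left(\begin{smallmatrix}a&c\\ c&b\end{smallmatrix}\right)$ is nonsingular.

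In that case your line-in-quadric worry is easily settled. Suppose the line $y_0+\spn(v)$, with $v$ spanning $\spn(x_1,x_1')^\perp$, lay inside $\{B(y,y)=b\}$. Then $(x_1,y_0)$ would have the nonsingular block as Gram matrix. So $\spn(x_1,y_0)^\perp=\spn(v)$ would be a non-degenerate line, contradicting $B(v,v)=0$. Reordering lets you arrange a nonsingular $(1,2)$ block whenever some principal $2\times2$ minor of the Gram matrix $S$ of $u$ is nonzero.

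But nonsingularity of $S$ does not guarantee such a minor exists. Take $S=\left(\begin{smallmatrix}1&1&1\\ 1&1&-1\\ 1&-1&1\end{smallmatrix}\right)$, which has $\det S=-4$ while all three principal $2\times2$ minors vanish. It is realized by $w_1=(1,0,0)$, $w_2=(1,1,0)$, $w_3=(1,0,-2)$ for $Q(X,Y,Z)=X^2+2YZ$.
\begin{itemize}
\item Every pair block is singular, and the sections $C_{x_1}$ are tangent-plane sections, i.e.\ pairs of ruling lines.
\item Your pair count is genuinely of order $|E|^2$. All points $(1,t,0)$ satisfy $Q=1$ and pairwise $\langle x,y\rangle=1$. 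So if $E$ contains $N$ points of this line, there are at least $N^2$ pairs and the incidence graph contains $K_{2,N}$.
\item The step bounding $x_3$ given $(x_1,x_2)$ still works here, but there are too many pairs. No bookkeeping of an ``exceptional set'' can save the pair-then-extend strategy.
\item Nor can you change $u$. The set $T_{E,u}^G$ is not preserved when $u$ is replaced by a different basis; only permutations of $u$ are harmless.
\item In this example the $u_i$ are already anisotropic. The obstruction is that every pair spans a degenerate plane.
\end{itemize}

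\paragraph{The missing idea.} What is missing is an argument that uses all three constraints simultaneously. The paper treats this case separately.
\begin{itemize}
\item It rescales $x_i\mapsto\lambda_i x_i$ to reach the Gram matrix above.
\item It parametrizes $\{Q=1\}$ by an open subset of $\P^1\times\P^1$ via the two rulings. In these coordinates $\langle\Phi(r,s),\Phi(r',s')\rangle=1$ if and only if $r=r'$ or $s=s'$.
\item The Gram conditions then force, up to a factor $2$, $x=\Phi(r,s)$, $y=\Phi(r,t)$, $z=\Phi(u,s)$, with $t$ determined by $u$.
\item Triples therefore correspond to $(r,s,u)$ with $(r,s)\in B_1$, $(r,u)\in B_2$, $(s,u)\in B_3$ and $|B_i|\le|E|$.
\item The discrete Loomis--Whitney inequality then gives the bound $2|E|^{3/2}$.
\end{itemize}
Some such three-dimensional projection inequality is what your proposal lacks.
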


See Section \ref{Sec:orthogonal_n_2_3} for the proof. 

Example \ref{Exa:positive_char_SO_n} shows that the exponent $n/2$ cannot be
replaced by any smaller exponent in a bound valid across all fields of characteristic different from $2$. In fact, the example shows that the exponent is sharp over every fixed algebraically closed field of positive characteristic different from $2$. 

\subsection{Orthogonal groups: a Brascamp--Lieb approach}

We describe a possible route to the bound in the orthogonal case in arbitrary dimension. Let $G=\OO(V,B)$.
For each $i=1,\dots,n$, consider the map
\[\pi_i:G\to \A^n,\quad g\mapsto gu_i.\]
The set $T_{E,u}^G$ consists precisely of all $g\in G(k)$ such that $\pi_i(g)\in E$ for all $i$. Thus the bound we are trying to establish takes the form 
\begin{equation}\label{eq:O_n_goal_bound}
	\#\{g\in G(k)\ |\ \pi_i(g)\in E\text{\ for each $i$}\}\ll_{n,k} |E|^{n/2}
\end{equation}
(we allow dependence on $k$ here as well; see Remark \ref{Rem:Orthogonal_BL_dependence_on_k}). 

We now state the classical Brascamp--Lieb inequality from harmonic analysis and then formulate an algebro-geometric version which, if established, would allow us, after a verification specific to the orthogonal group, to deduce \eqref{eq:O_n_goal_bound}.

Let $H,H_1,\dots,H_m$ be finite-dimensional real Hilbert spaces, 
with induced Lebesgue measures respectively
$dx, dy_1,\dots dy_m$, and let $\pi_j:H\to H_j$ be linear maps ($j=1,\dots,m$). Let $p_1,\dots,p_m>0$. The Brascamp--Lieb inequality states that
under certain assumptions, there exists a constant $C>0$ such that for all measurable functions $f_j:H_j\to\R_{\geq 0}$ with $\int_{H_j}f_j(y_j)\, dy_j<\infty$, the following holds:
\begin{equation}
\label{eq:BL_integrals}	
\int_H \prod_{j=1}^m (f_j\circ\pi_j (x))^{p_j}\, dx\leq C\prod_{j=1}^m\left(\int_{H_j}f_j(y_j)\,dy_j\right)^{p_j}.
\end{equation}

Bennett, Carbery, Christ, and Tao prove \cite[Theorem~1.15]{BCCT} that under a natural scaling condition, \eqref{eq:BL_integrals} holds provided that for every subspace $V\subset H$, 
\begin{equation}
\label{eq:BL_subspaces_V_of_Hilb_space}
\dim V\leq \sum_{j=1}^m p_j\dim\pi_j(V).
\end{equation}	 

In \cite{BCCT2}, the same authors also establish the following discrete version for finitely generated abelian groups. The $p_j$ in the statement below correspond to $1/p_j$ in \cite{BCCT2}, and the $f_j$ below correspond to $f_j^{p_j}$ from \cite{BCCT2}. 

\begin{theorem}[Theorem~2.4 in \cite{BCCT2}]
Let $G,G_1,\dots,G_m$ be finitely generated abelian groups, and let
$\pi_j:G\to G_j$ be group homomorphisms ($j=1,\dots,m$). Let $p_1,\dots,p_m\in (0,1]$. Suppose that
\begin{equation}
	\label{eq:BL_ab_subgroups_condition}
\rank H\leq \sum_{j=1}^m p_j \rank \pi_j(H)\quad\text{for all subgroups $H\subset G$}.
\end{equation} 
Then there exists a $C>0$ such that for all functions $f_j: G_j\to \R_{\geq 0}$ with $\sum_{y_j\in G_j} f_j(y_j)<\infty$, the following holds: 
\begin{equation}
\label{eq:BL_discrete_ab_gps_conclusion}	
\sum_{x\in G}\prod_{j=1}^m (f_j\circ\pi_j(x))^{p_j}\leq C \prod_{j=1}^m \left(\sum_{y_j\in G_j} f_j(y_j)\right)^{p_j}.
\end{equation}
In particular, taking the $f_j$'s to be the indicator functions of finite subsets $A_j\subset G_j$, 
\begin{equation*}
\#\{x\in G\ |\ \pi_j(x)\in A_j\text{ for all }j\}
\leq C\prod_{j=1}^m |A_j|^{p_j}.
\end{equation*}
\end{theorem}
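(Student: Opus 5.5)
The plan is to follow the strategy of Bennett--Carbery--Christ--Tao: reduce to free abelian groups, interpolate to the vertices of the polytope of admissible exponents, and at each vertex either split along a critical subgroup or find that the inequality is trivial. Concretely, I will prove \eqref{eq:BL_discrete_ab_gps_conclusion} by induction on $\rank G$, and I regard the exponent vector $p=(p_1,\dots,p_m)$ as a point of the polytope
\[
P:=\Bigl\{p\in[0,1]^m\ \Bigm|\ \rank H\leq \textstyle\sum_j p_j\rank\pi_j(H)\text{ for all subgroups } H\subset G\Bigr\}.
\]
This is a genuine polytope: the ranks take finitely many values, so only finitely many distinct linear constraints occur.

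\textbf{Step 1: reduction to torsion-free groups.} Write $G=\Z^d\oplus T$ with $T$ finite. Summing over the finitely many cosets of $\Z^d$, the left side of \eqref{eq:BL_discrete_ab_gps_conclusion} is at most $|T|$ times the supremum over $t\in T$ of the same sum for the translated maps $x\mapsto\pi_j(x+t)$ on $\Z^d$. Translation does not change $\sum_{y_j} f_j(y_j)$. Next, replace each $G_j$ by $\pi_j(\Z^d)$ and push $f_j$ forward, which does not increase the total mass. Then quotient $G_j$ by its torsion: since $p_j\le 1$, we have $\bigl(\sum_{\text{fiber}} f_j\bigr)^{p_j}\ge f_j^{p_j}$ pointwise, so taking fiber sums only enlarges the left side. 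After these moves, $G$ and all $G_j$ are free and the ranks are unchanged, so the hypothesis \eqref{eq:BL_ab_subgroups_condition} persists.

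\textbf{Step 2: interpolation.} For fixed $f_j$, the left side of \eqref{eq:BL_discrete_ab_gps_conclusion} is a log-convex function of $p$, by H\"older's inequality. The right side is log-linear in $p$. Hence if the inequality holds at every vertex of $P$, it holds on all of $P$, with $C$ the maximum of the vertex constants. This is where I use that $P$ has finitely many vertices.

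\textbf{Step 3: the vertices.} Let $p$ be a vertex of $P$. Then $p$ is cut out by $m$ tight constraints, and there are two cases.
\begin{itemize}
\item[(a)] Some proper nonzero subgroup $H$ is \emph{critical}, meaning $\rank H=\sum_j p_j\rank\pi_j(H)$. I may take $H$ saturated, so that $G/H$ is free. I then split the sum over $x\in G$ as a sum over cosets $x+H$. On each coset I apply the inductive hypothesis for $\pi_j|_H$, with $f_j$ restricted to translates of $\pi_j(H)$. This bounds each coset by $\prod_j F_j(\bar x)^{p_j}$, where $F_j$ is the fiber-sum of $f_j$ over $\pi_j(H)$-cosets, a function on $G_j/\pi_j(H)$. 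Finally I apply the inductive hypothesis on $G/H$ with the induced maps $G/H\to G_j/\pi_j(H)$. The needed verifications are the rank conditions: for $H$ they are inherited directly, and for $G/H$ they follow from criticality of $H$ together with the additivity $\rank(K)=\rank(K\cap H)+\rank(K/(K\cap H))$ and the corresponding inequality for images.
\item[(b)] No proper nonzero subgroup is critical. Then the tight constraints come only from the bounds $p_j\in\{0,1\}$ and from $H=G$. Let $S=\{j: p_j=1\}$. Applying the condition to $H=\bigcap_{j\in S}\ker\pi_j$ gives $\rank H\le 0$, so the map $x\mapsto(\pi_j(x))_{j\in S}$ is injective on the free group $G$. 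The left side is then at most $\sum_{x}\prod_{j\in S}f_j(\pi_j(x))\le\prod_{j\in S}\sum f_j$, after bounding the factors with $p_j=0$ by their supremum. To handle those factors cleanly, I would normalize to $\sup f_j\le\sum f_j$, or simply treat indicator functions, which is the case needed in the paper.
\end{itemize}

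\textbf{Main obstacle.} I expect the hardest part to be the bookkeeping in Step 3(a). One must check that the quotient datum satisfies \eqref{eq:BL_ab_subgroups_condition}, which requires subadditivity of image ranks under the criticality equality. One must also ensure that the constants do not degenerate, which is why the passage from vertices to all of $P$ via finiteness of the vertex set in Step 2 is essential.
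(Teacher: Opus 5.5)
The paper gives no proof of this statement. It is quoted from \cite{BCCT2}, with the change of exponents and functions recorded just before it, so there is no in-paper argument to compare against. Judged on its own, your outline is the standard Bennett--Carbery--Christ--Tao scheme: torsion reduction, interpolation to the finitely many vertices of the exponent polytope, factorization through a critical subgroup, and injectivity at the remaining vertices. It goes through. Step 2 is just H\"older applied to $\prod_j f_j^{p_j}=\prod_v\bigl(\prod_j f_j^{v_j}\bigr)^{\theta_v}$ for $p=\sum_v\theta_v v$. In 3(a) the constants multiply, $C\le C_H\,C_{G/H}$, with one $H$ per vertex. A few points need repair, and one is easier than you expect.

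\textbf{Case split in Step 3.} The dichotomy must be by rank rather than by properness. A critical subgroup of finite index imposes exactly the constraint of $G$, and its saturation is $G$ itself. So in (a) you need $0<\rank H<\rank G$ for the induction to lower the rank.

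\textbf{Quotient verification.} What you call the main obstacle is immediate if you test subgroups of $G/H$ through their preimages $K\supseteq H$. Then $\rank(K/H)=\rank K-\rank H$ and $\rank\bar\pi_j(K/H)=\rank\pi_j(K)-\rank\pi_j(H)$ hold exactly. So the condition for $K/H$ is the condition for $K$ minus the critical equality for $H$, and no subadditivity is needed.

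\textbf{Case (b).} The bound $\sup f_j\le\sum f_j$ holds automatically for nonnegative functions on a discrete group. No normalization or restriction to indicators is needed. The same bound handles a possible coordinate $p_{j_0}\in(0,1)$, which you do not mention. The vertex count allows at most one such coordinate, and $p_{j_0}<1$ is exactly what makes $\rank\bigcap_{j\in S}\ker\pi_j=0$. Such a coordinate in fact cannot occur. It would force the constraint for $G$ to be tight, while injectivity gives $\rank G\le\sum_{j\in S}\rank\pi_j(G)$. Hence $\rank\pi_{j_0}(G)=0$, no tight constraint involves $p_{j_0}$, and these vertices lie in $\{0,1\}^m$.

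\textbf{Conventions.} Fix a convention for $f_j^{0}$ at vertices with $p_j=0$, e.g.\ sum only over $x$ with all $f_j(\pi_j x)>0$. This ensures the H\"older step and the vertex bounds concern the same quantity. Also, the pointwise domination in Step 1 uses only monotonicity of $t\mapsto t^{p_j}$, not $p_j\le 1$.
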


An analogous statement has been established in \cite{mult_linear_Brascamp_Lieb} in the context of finite-dimensional vector spaces over a field. The $p_j$ below correspond to the $s_j$ in \cite{mult_linear_Brascamp_Lieb}, and the $f_j$ below correspond to the $f_j^{1/s_j}$ from \cite{mult_linear_Brascamp_Lieb}. 

\begin{theorem}[Theorem~2.5 in \cite{mult_linear_Brascamp_Lieb}]	
Let $k$ be a field, let $V, V_1,\dots,V_m$ be finite-dimensional vector spaces over $k$, and let $\pi_j:V\to V_j$ be linear maps ($j=1,\dots,m$). Let $p_1,\dots,p_m\in(0,1]$. Suppose that
\begin{equation}
	\label{eq:BL_v_spaces_subspace_condition}
	\dim L\leq \sum_{j=1}^m p_j \dim \pi_j(L)\quad\text{for all subspaces $L\subset V$}.
\end{equation} 
Then there exists a $C>0$ such that for all functions $f_j: V_j\to \R_{\geq 0}$ with $\sum_{y_j\in V_j}f_j(y_j)<\infty$, the following holds: 
\begin{equation*}
	\sum_{x\in V}\prod_{j=1}^m (f_j\circ\pi_j(x))^{p_j}\leq C \prod_{j=1}^m \left(\sum_{y_j\in V_j} f_j(y_j)\right)^{p_j}.
\end{equation*}
In particular, taking the $f_j$'s to be the indicator functions of finite subsets $A_j\subset V_j$, 
\begin{equation*}
	\#\{x\in V\ |\ \pi_j(x)\in A_j\text{ for all }j\}
	\leq C\prod_{j=1}^m |A_j|^{p_j}.
\end{equation*}
\end{theorem}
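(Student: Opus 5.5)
The plan is to prove the statement with $C=1$ by converting it into an entropy inequality and then inducting on $\dim V+m$. The first step is a Gibbs variational reduction. For a finitely supported random variable $X$ on $V$ with law $\mu$, one has
\[
\log\sum_{x}\prod_j f_j(\pi_jx)^{p_j}=\sup_\mu\Big(H(X)+\sum_j p_j\,\mathbb{E}\log f_j(\pi_jX)\Big).
\]
Gibbs' inequality gives $H(\pi_jX)+\mathbb{E}\log f_j(\pi_jX)\le \log\sum_{y_j}f_j(y_j)$, and the weights $p_j\ge 0$ preserve this when summed. So it suffices to show that
\[
H(X)\le \sum_{j=1}^m p_j H(\pi_jX)\qquad(\ast)
\]
for every finitely supported random variable $X$ on $V$, whenever the dimension condition \eqref{eq:BL_v_spaces_subspace_condition} holds. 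Restricting to the finite support of the $f_j$'s causes no loss. For a fixed $X$, inequality $(\ast)$ is linear in $p$, and the admissible $p$ form a polytope: only finitely many distinct vectors $(\dim L,\dim\pi_1L,\dots,\dim\pi_mL)$ occur. Also, decreasing any $p_j$ only strengthens $(\ast)$, because $H\ge 0$.

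The induction on $\dim V+m$ proceeds through the following reductions, in this order.

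(a) If some $\pi_j=0$, or $p_j$ is $0$, drop the index $j$.

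(b) If some $p_j=1$, write $H(X)=H(\pi_jX)+H(X\mid\pi_jX)$. Conditioned on $\pi_jX$, the variable $X$ lies in a coset of $K=\ker\pi_j\subsetneq V$. The data $(\pi_i|_K)_{i\ne j}$ with the same $p_i$ satisfy the hypothesis on $K$, because $\pi_jL=0$ for $L\subset K$. Induction applied to each translate then gives $H(X\mid\pi_jX)\le\sum_{i\ne j}p_iH(\pi_iX\mid\pi_jX)\le\sum_{i\ne j}p_iH(\pi_iX)$.

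(c) Suppose there is a proper nonzero \emph{critical} subspace $L$, meaning $\dim L=\sum p_j\dim\pi_jL$. Split $H(X)=H(\bar X)+H(X\mid\bar X)$ with $\bar X=X \bmod L$. The restricted data on $L$ satisfy the hypothesis trivially. The quotient data $\bar\pi_j:V/L\to V_j/\pi_j(L)$ satisfy it because criticality of $L$ gives $\dim L'-\dim L\le\sum p_j(\dim\pi_jL'-\dim\pi_jL)$ for $L'\supset L$. Induction gives $H(\bar X)\le\sum p_jH(\pi_jX\bmod\pi_jL)$ and $H(X\mid\bar X)\le\sum p_jH(\pi_jX\mid\bar X)$. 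Since $\bar X$ determines $\pi_jX\bmod\pi_jL$, we have $H(\pi_jX\mid\bar X)\le H(\pi_jX\mid\pi_jX\bmod\pi_jL)$. Adding the two bounds and using the chain rule for $H(\pi_jX)$ yields $(\ast)$.

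(d) If $V$ is not critical, scale $p$ down uniformly until a first constraint becomes tight. This is legitimate because lowering $p$ only strengthens $(\ast)$, and it puts us in case (c) or in the case below.

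The main obstacle is the remaining case: $V$ is critical, no proper nonzero subspace is critical, and every $p_j\in(0,1)$. Here I would use linearity in $p$. The tight constraints at $p$ reduce to the single equation $\dim V=\sum p_j\dim\pi_jV$, while all other constraints hold strictly. So if $m\ge2$, I can move $p$ in both directions along a line inside the hyperplane $\{\sum q_j\dim\pi_jV=\dim V\}$. I continue in each direction until the moving point hits a new tight constraint (a proper critical subspace), or some $q_j$ reaches $0$ or $1$. At both endpoints, the earlier cases apply by induction. Since $p$ is a convex combination of the endpoints, $(\ast)$ at $p$ follows.

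If $m=1$, criticality gives $p_1\dim\pi_1V=\dim V$ with $p_1\le1$, which forces $p_1=1$ and $\pi_1$ injective. This is case (b). The base case $\dim V=0$ is trivial. The care needed is in checking that each endpoint genuinely has smaller complexity, or falls into case (b) or (c), so that the induction is well-founded.
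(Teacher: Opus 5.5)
Your argument is correct, and it actually proves the inequality with $C=1$. The paper gives no proof of this statement; it quotes it from \cite{mult_linear_Brascamp_Lieb}. So the natural comparison is with the functional argument used in the literature (\cite{BCCT2} and the cited source), which I summarize from that literature since it is not reproduced in the paper. That argument works directly with the multilinear sum. H\"older's inequality shows that the set of exponent vectors for which the bound holds with constant $1$ is convex, which reduces matters to vertices of the admissible polytope. At a vertex, either every $p_j\in\{0,1\}$, in which case the hypothesis makes $x\mapsto(\pi_jx)_{p_j=1}$ injective and the bound is immediate, or there is a proper nonzero critical subspace $L$. In the latter case one applies the inequality on each coset of $L$ and then the quotient inequality to the fibre sums.

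You instead pass through the Gibbs variational principle to the entropy inequality $H(X)\le\sum_j p_jH(\pi_jX)$. After that, interpolation becomes literally linearity in $p$, factorization through a critical subspace becomes the chain rule, and monotonicity in $p$ is just $H\geq 0$. Your move-along-a-line step performs the vertex reduction on the fly. The induction on $\dim V+m$ is well founded, because each endpoint still satisfies all constraints and lands in case (a), (b) or (c).

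The entropy route is shorter and makes the sharp constant $1$ transparent for all $f_j$ at once. It does depend on the constant being exactly $1$. In the torsion setting of \cite{BCCT2} (e.g.\ $\Z/2\Z$ with $\pi_1=0$, $p_1=1$) one needs $C\geq 2$, and the entropy inequality would have to carry an additive $\log C$. The functional route handles that setting directly.

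One small correction: summable $f_j$ need not be finitely supported. This costs nothing, since the variational formula with the supremum taken over finitely supported $X$ already covers infinite sums (equivalently, use monotone convergence over finite truncations).
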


Motivated by these, we now propose an algebro-geometric version, whereby Hilbert spaces (respectively, finitely generated abelian groups, respectively, vector spaces) are replaced by algebraic varieties, linear maps (respectively, group homomorphisms) by morphisms, and subspaces by irreducible subvarieties.

\begin{conjecture}\label{Conj:Brascamp_Lieb}	
Let $k$ be a field, let $X, Y_1, \dots, Y_m$ be affine varieties\footnote{A variety over $k$ is a scheme of finite type over $k$.} over $k$, and let  $\pi_j: X\to Y_j$ be morphisms ($j=1,\dots,m$). Let $p_1,\dots,p_m\in (0,1]$. Suppose that\footnote{Here $(\pi_j)_{\overline{k}}$ denotes base change to $\overline{k}$; then we take closure of the image.}
\begin{equation}
\dim Z\leq \sum_{j=1}^m p_j\dim \overline{(\pi_{j})_{\overline{k}}(Z)}\quad\text{for every irreducible subvariety $Z\subset X_{\overline{k}}$}.
\label{eq:dimension_subvariety_Brascamp_Lieb}
\end{equation}
Then there exists a $C>0$ such that for all finite subsets $A_j\subset Y_j(k)$, the following holds:
\begin{equation}
\#\{x\in X(k)\ |\ \pi_j(x)\in A_j\text{ for all }j\}
\leq C\prod_{j=1}^m |A_j|^{p_j}.
\label{eq:Brascamp_Lieb_conclusion}
\end{equation}
\end{conjecture}

We can also state a stronger version that takes as input finitely supported functions $f_j: Y_j(k)\to\R_{\geq 0}$ and replaces \eqref{eq:Brascamp_Lieb_conclusion}
more generally by the analogue of \eqref{eq:BL_discrete_ab_gps_conclusion}; then \eqref{eq:Brascamp_Lieb_conclusion} appears when $f_j$ are specialized to the indicator functions of the sets $A_j$. 

Examples \ref{Exa:failure_BL_proj_axis}, \ref{Exa:failure_BL_one_half}, and \ref{Exa:failure_BL_cuspidal} show that one cannot discard the condition for subvarieties $Z$ and that it is not enough to check (\ref{eq:dimension_subvariety_Brascamp_Lieb}) for irreducible components $Z$ of $X$. 

\begin{remark}
Duncan \cite{Duncan} proves a weighted Brascamp--Lieb
inequality in a Euclidean real-variable setting, with source an open subset of a
real algebraic variety and targets Riemannian manifolds, for a
class of maps that are $C^\infty$ on an open dense subset of their
domain. This class contains, and is substantially broader than, the
class of polynomial maps over $\mathbb R$. The inequality in \cite{Duncan} is weighted and involves no hypothesis analogous to  \eqref{eq:dimension_subvariety_Brascamp_Lieb} imposed on the images of irreducible subvarieties. Thus, even when $k=\mathbb R$, the two statements address different types of Brascamp--Lieb inequalities despite the similar terminology.
\end{remark}

\begin{remark}
A recent work of Johnsrude \cite[Theorem~9.4]{Johnsrude}
relates to the case when all of $X$, $Y_1, \dots, Y_m$ are affine spaces. Namely,
Johnsrude studies a setting with polynomial maps $P_j: \Z^n\to \Z^{n_j}$ with integer coefficients, $j=1,\dots,m$, and establishes a Brascamp--Lieb inequality under a scaling condition and the assumption that, 
at every point $z\in\C^n$, the differentials
$dP_j(z)$ are surjective and satisfy the dimension inequalities
\eqref{eq:BL_v_spaces_subspace_condition},  tested against every complex-linear
subspace of $\C^n$.
\end{remark}

In Section \ref{Sec:Brascamp_Lieb}, we assume Conjecture \ref{Conj:Brascamp_Lieb} and deduce the following conditional corollary.  

\begin{corollary}\label{Cor:General_case_orthogonal}
Assume Conjecture \ref{Conj:Brascamp_Lieb}. 
Let $k$ be a field with $\operatorname{char}(k)\neq 2$, and let $V=k^n$. Let $B$ be a non-degenerate symmetric bilinear form on $V$. Let $G$ be one of the groups $\OO(V,B)$ or $\SO(V,B)$. Let $E\subset V$ be a finite subset, and let $u=(u_1,\dots,u_n)$ be a basis of $V$. 
Then
\begin{equation}
	\label{eq:bound_cor_orthogonal}
|T_{E,u}^G|\ll_{n,k} |E|^{n/2}.
\end{equation}
\end{corollary}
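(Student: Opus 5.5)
Apply Conjecture \ref{Conj:Brascamp_Lieb} with $X=G$ (an affine variety over $k$), with $m=n$, $Y_j=\A^n$, $\pi_j(g)=gu_j$, $A_j=E$ and $p_j=1/2$ for every $j$. The conclusion \eqref{eq:Brascamp_Lieb_conclusion} then reads $|T_{E,u}^G|\le C|E|^{n/2}$, which is exactly \eqref{eq:bound_cor_orthogonal}. It suffices to treat $G=\OO(V,B)$, since $T_{E,u}^{\SO}\subset T_{E,u}^{\OO}$. The whole task is therefore to verify the dimension hypothesis
\[
\dim Z\le \tfrac12\sum_{j=1}^n \dim\overline{\pi_j(Z)}
\]
for every irreducible subvariety $Z\subset G_{\overline k}$. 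Over $\overline k$ we may take $B$ to be standard, and by replacing $Z$ with $Zh$ for a suitable $h$ we may move $u$ around, though $u$ need not be orthonormal.

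The key step is an inductive "flag" argument. Consider the chain of maps $\rho_j:Z\to (\A^n)^j$, $g\mapsto (gu_1,\dots,gu_j)$, and set $d_j=\dim\overline{\rho_j(Z)}$, with $d_0=0$. Since $u$ is a basis, $\rho_n$ is injective, so $d_n=\dim Z$. Hence $\dim Z=\sum_j (d_j-d_{j-1})$, and it is enough to control each increment $e_j:=d_j-d_{j-1}$. The increment $e_j$ is the dimension of a generic fiber of the projection $\overline{\rho_j(Z)}\to\overline{\rho_{j-1}(Z)}$, i.e.\ the dimension of the set of possible $gu_j$ once $gu_1,\dots,gu_{j-1}$ are fixed. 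For $g$ in the orthogonal group, these fixed values determine the inner products $B(gu_j,gu_i)$ for $i<j$ and $B(gu_j,gu_j)$. So $gu_j$ lies in an affine-linear slice of codimension $j-1$ intersected with a quadric, and $e_j\le \min\bigl(\dim\overline{\pi_j(Z)},\, n-j\bigr)$.

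The main obstacle is to combine these bounds into the factor $1/2$. The naive estimate $e_j\le\dim\overline{\pi_j(Z)}$ only gives $\dim Z\le\sum_j\dim\overline{\pi_j(Z)}$. What I would try first is to average the flag bound over orderings: the bound above holds for every ordering $\sigma$ of $\{1,\dots,n\}$, giving
\[
\dim Z\le\sum_t \min\bigl(\dim\overline{\pi_{\sigma(t)}(Z)},\,n-t\bigr).
\]
One then tries to show that for the sorted values $a_1\ge\dots\ge a_n$ of $a_j=\dim\overline{\pi_j(Z)}$ (each $a_j\le n-1$, since the image lies in a quadric), a good choice of ordering yields $\dim Z\le\frac12\sum_j a_j$. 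Pure counting does not suffice: for $Z=G$ one has $a_j=n-1$, and the flag sum $\sum_t(n-t)=n(n-1)/2$ is exactly $\frac12\sum_j a_j$, so sharpness already holds at $Z=G$. For small $Z$ one needs a finer input. If $Z$ lies in a coset of a stabilizer, or more generally if the images $\pi_j(Z)$ are small, one should use the fact that the fiber in step $j$ is also constrained by the orbits of the previously fixed data. Concretely, I would try to prove the stronger flag bound $e_j\le \min\bigl(a_j,\ a_j-\#\{i<j: \text{direction of }gu_i \text{ varies}\}\bigr)$, possibly via an induction on $n$ that restricts to the stabilizer of $gu_1$, which is an orthogonal group of the complement of $u_1$ in the nonisotropic case. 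One must also handle isotropic $u_1$ and the possibility that $\overline{\pi_1(Z)}$ is a point, where the problem reduces to $Z$ inside a stabilizer coset. Getting this induction to respect the factor $1/2$ in degenerate (isotropic) configurations is where I expect the real difficulty. Once the inequality is verified, the constant depends on $G$ and hence on $n$ and $k$ (through $B$), which matches $\ll_{n,k}$.
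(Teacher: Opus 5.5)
Your reduction is the right one: it suffices to treat $\OO(V,B)$, and everything follows from the conjecture with $p_j=1/2$ once the dimension hypothesis is checked. But the core step is missing, because you never establish $\dim Z\le\frac12\sum_j\dim\overline{\pi_j(Z)}$. The flag bound $e_j\le\min(a_j,n-j)$, even optimized over orderings, cannot produce the factor $1/2$. For example, take $n=4$, a hyperbolic basis $u=(x_1,y_1,x_2,y_2)$ with $B(x_i,y_j)=\delta_{ij}$ and $B(x_i,x_j)=B(y_i,y_j)=0$, and let $Z$ be the maximal torus $gx_i=z_ix_i$, $gy_i=z_i^{-1}y_i$. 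Then every $a_j=1$ and $\dim Z=2=\frac12\sum_j a_j$. Yet $\sum_t\min(a_{\sigma(t)},4-t)=3$ for every ordering $\sigma$. Your proposed refinement subtracts nothing here, since all the lines $k\,gu_i$ are constant. What actually forces $e_2=0$ is that the affine slice $\{y: B(y,gx_1)=1\}$ meets $\overline{\pi_2(Z)}=ky_1$ in a single point. The bound $\min(a_j,n-j)$ discards this interaction, and your stabilizer induction (with its isotropic cases) is only hoped to recover it.

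The missing idea is to linearize. Pick $g\in Z$ with every $\pi_j(g)$ a smooth point of $Y_j=\overline{\pi_j(Z)}$; this is possible since $Z$ is irreducible and $Z\to Y_j$ is dominant. Then $\dim Z\le\dim T_gZ$ and $\dim d\pi_j|_g(T_gZ)\le\dim Y_j$. After left translation, $d\pi_j$ becomes $X\mapsto Xu_j$ on $\mathfrak{o}(V,B)\cong\bigwedge^2V^*$, i.e.\ $\omega\mapsto\omega(u_j,\cdot)$. Let $u_1^*,\dots,u_n^*$ be the basis dual to $u$. In the basis $u_a^*\wedge u_b^*$ ($a<b$), this map is, up to sign, the coordinate projection onto the pairs $\{a,b\}$ containing $j$. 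These $n$ coordinate sets form an exact $2$-cover of the $\binom n2$ pairs. So the linear Shearer-type inequality of Lemma~\ref{Lem:lin_alg_projections_inequality} gives $2\dim L\le\sum_j\dim d\pi_j|_g(L)$ for every subspace $L$, in particular $L=T_gZ$. This treats isotropic and degenerate configurations uniformly, with no induction, for any basis and any algebraically closed field of characteristic $\neq2$.

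Second, even granting that inequality, applying the conjecture to $X=\OO(V,B)$ with $\pi_j(g)=gu_j$ gives a constant depending on $k$, $B$ and $u$, not only on $n$ and $k$. The conjecture's constant depends on the morphisms $\pi_j$, so your closing sentence is wrong. Symmetry alone does not remove the $u$-dependence: $T_{E,hu}^G=T_{E,u}^Gh^{-1}$ for $h\in G(k)$ normalizes $u$ only up to its Gram matrix.

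The paper handles the two dependences separately:
\begin{itemize}
\item For $B$, it passes to $\overline k$ and conjugates by $L$ with $B=B_0(L\cdot,L\cdot)$. This gives a bijection $T_{E,u}^{\OO(V,B)}\to T_{L(E),L(u)}^{\OO(V,B_0)}$, $g\mapsto LgL^{-1}$.
\item For $u$, it applies the conjecture once to the universal family $\Pi_j:\GL_n\times\OO(V,B_0)\to\GL_n\times\A^n$, $(T,g)\mapsto(T,gTe_j)$. It takes $A_j=\{T_0\}\times E$, where $T_0$ has columns $u_1,\dots,u_n$.
\end{itemize}
Verifying the hypothesis for an integral $Z$ in this family uses the single-basis inequality on the geometric generic fiber of $Z\to S\subset\GL_n$. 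That fiber lives over $K=\overline{k(S)}$, with the columns of the generic matrix as basis. One then combines $\dim Z=\dim S+\dim Z_\eta$, $\dim Y_j=\dim S+\dim(Y_j)_K$, and $\dim S\le\frac n2\dim S$ for $n\ge2$. This is why the single-basis inequality must hold for arbitrary bases and arbitrary algebraically closed fields.
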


For $n\geq2$, Example \ref{Exa:positive_char_SO_n} 
shows that the exponent $n/2$ cannot be replaced by any smaller exponent, both in a bound valid across all
fields of characteristic different from $2$ and in a bound over any fixed algebraically closed field of positive characteristic different from $2$.

The key step in the proof is verifying that the orthogonal group $\OO(V,B)$ together with the maps 
$\pi_j:\OO(V,B)\to \A^n$, $g\mapsto gu_j$
satisfies the Brascamp--Lieb condition \eqref{eq:dimension_subvariety_Brascamp_Lieb} with $p_j=1/2$ for $j=1,\dots,n$. See Proposition \ref{Prop:OO_satisfies_Brascamp_Lieb}. 

\begin{remark}
	\label{Rem:Orthogonal_BL_dependence_on_k}
While the implied constant in \eqref{eq:bound_cor_orthogonal} is allowed to depend on the field $k$, in Section \ref{Sec:BL_Alternative_versions} we discuss alternative versions of Conjecture \ref{Conj:Brascamp_Lieb} that would eliminate that dependence. 
\end{remark}

\begin{remark}
When $k=\R$, the group $G$ is the standard $\OO(n)$ or $\SO(n)$, and the basis $u_1,\dots,u_n$ is orthonormal, the conclusion of Corollary \ref{Cor:General_case_orthogonal} holds unconditionally --- this follows from \cite[Proposition 12]{BCELM}. See Section \ref{Sec:real_orthogonal_groups}. This is further evidence for Conjecture \ref{Conj:Brascamp_Lieb}.  	
\end{remark}

\subsection{A symplectic conjecture} 

The exponent $3n/4$ in the conjecture below arises from Example \ref{Exa:sharpness_symplectic}, which shows that it cannot be replaced by a smaller exponent in a bound valid across all fields, or in a bound over any fixed algebraically closed field of positive characteristic.

\begin{conjecture}
	\label{Conj_symplectic}
Let $k$ be a field, and let $V=k^n$. Assume that $n$ is even, and let $\omega$ be a non-degenerate alternating form on $V$. Consider the symplectic group $G=\Sp(V,\omega)$. Let $E\subset V$ be a finite subset, and let $u=(u_1,\dots,u_n)$ be a basis of $V$. Then
\[
|T_{E,u}^{G}|\ll_n |E|^{3n/4}.
\] 
\end{conjecture}

The case $n=2$ is included in Theorem \ref{Thm:main_high_dimensions}, since
$\Sp_2\simeq \SL_2$.  In higher dimensions, the conjecture is motivated by
the pattern established for the other classical groups: the predicted
exponent is the one arising from finite-subfield examples. 

\subsection{Setwise stabilizers}
\label{Sec:intro_setwise_stabilizers}

For a finite subset $E\subset V$ that spans $V$, a related object\footnote{Notation as in \cite{Vietnam_BLMS}.} (representing the original motivation for the current work) is the setwise stabilizer
\[
R_E^G=\{g\in G(k)\ |\ g(E)=E\}.
\]
Taking a basis $u_1,\dots,u_n$ of $V$ with all $u_i\in E$ gives
\begin{equation}
	\label{eq:comparison_R_E_T_E}
	R_E^G\subset T_{E,u}^G,	\quad\text{so in particular}\quad |R_E^G|\leq |T_{E,u}^G|.
\end{equation}	

Therefore all upper bounds established for $|T_{E,u}^G|$ apply also to $|R_E^G|$.  Moreover, in every lower-bound example cited in Table \ref{Tab:summary_results}, the exponent listed in Table \ref{Tab:summary_results} is
already attained by the stabilizer $R_E^G$, rather than only by $T_{E,u}^G$. 
Consequently, Table \ref{Tab:summary_results} yields corresponding
established, conditional, and conjectural statements for $R_E^G$,
with the same status. It is perhaps surprising that, even though
$T_{E,u}^G$ is in general larger and less rigid than $R_E^G$ for a given $E$, their extremal growth exponents are the same in all cases where they have been determined. This agreement holds both when the field is allowed to vary and over every fixed algebraically closed field of positive characteristic (different from $2$ in the orthogonal cases).

For $G=\SL_n$, the stabilizer bound
$|R_E^{\SL_n}|\ll_n |E|^{n-1/n}$
also admits a short direct proof, independent of Theorem
\ref{Thm:main_high_dimensions}. We include it in Section
\ref{Sec:short_proof_R_E_special_linear}.

The stabilizer problem admits a clean answer not only in the positive-characteristic cases already described, but also in characteristic zero.

\begin{theorem}	\label{Thm:bound_char_0_rank}
	Let $k$ be a field of characteristic zero, and let $V=k^n$. Let $G$ be one of
	\[
	\GL_n,\quad \SL_n,\quad \OO(V,B),\quad \SO(V,B),\quad \Sp(V,\omega)
	\]
	(the orthogonal groups are defined with respect to a non-degenerate symmetric bilinear form $B$ on $V$, and the symplectic group is defined when $n$ is even with respect to a non-degenerate alternating bilinear form $\omega$ on $V$). Let $E\subset V$ be a finite subset that spans $V$.
	Then
	\begin{equation}
		|R_E^G|\ll_n |E|^{\rank G},
		\label{eq:bound_rank}
	\end{equation}
	where $\rank G$ denotes absolute rank\footnote{See Table \ref{Tab:classical-groups-ranks}.}. 
\end{theorem}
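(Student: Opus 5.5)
The plan is to combine Jordan's theorem on finite linear groups with an eigenspace analysis of the resulting abelian subgroup. First, $R:=R_E^G$ is finite and acts faithfully on $E$, because $E$ spans $V$: an element fixing $E$ pointwise fixes $V$. So $R$ is a finite subgroup of $\GL_n(k)\subset\GL_n(\overline{k})$. Since $\operatorname{char}k=0$, Jordan's theorem gives a normal abelian subgroup $A\le R$ with $[R:A]\le J(n)$, where $J(n)$ depends only on $n$. It therefore suffices to prove $|A|\ll_n |E|^{\rank G}$.

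Since $A$ is finite abelian in characteristic zero, it is simultaneously diagonalizable over $\overline{k}$. Write
\[
V_{\overline{k}}=\bigoplus_{\chi\in S}V_\chi ,
\]
where $S\subset\Hom(A,\overline{k}^{*})$ is the set of characters of $A$ that occur, and $V_\chi$ is the corresponding eigenspace. Since the representation is faithful, the map $A\to\prod_{\chi\in S}\chi(A)$ is injective.

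The key step bounds each cyclic group $\chi(A)$. Since $E$ spans $V$, there is some $e\in E$ whose $V_\chi$-component $e_\chi$ is nonzero. For $a\in A$, the $V_\chi$-component of $ae$ is $\chi(a)e_\chi$. Hence distinct values of $\chi(a)$ give distinct elements $ae$, and all of these lie in $E$. Therefore $|\chi(A)|\le|Ae|\le|E|$. It then remains to find a subset $S'\subset S$ with $|S'|\le\rank G$ such that $A\to\prod_{\chi\in S'}\chi(A)$ has kernel of size $O_n(1)$.

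\textbf{$\GL_n$.} Here $|S|\le n=\rank G$, so take $S'=S$.

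\textbf{$\SL_n$.} If $|S|\le n-1$, take $S'=S$. If $|S|=n$, every eigenspace is a line, and $\det=1$ forces $\prod_{\chi\in S}\chi=1$. So any one character is determined by the other $n-1$, and we may take $S'$ to be any $n-1$ of them.

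\textbf{$\Sp(V,\omega)$, $\OO(V,B)$, $\SO(V,B)$.} Since $A$ preserves the form, the form pairs $V_\chi$ nondegenerately with $V_{\chi^{-1}}$ and is zero between $V_\chi$ and $V_\psi$ when $\psi\ne\chi^{-1}$. Hence $S$ is closed under inversion. Split $S$ into pairs $\{\chi,\chi^{-1}\}$ with $\chi\ne\chi^{-1}$, plus characters with $\chi^2=1$.
\begin{itemize}
\item Each pair uses dimension at least $2$, so there are at most $\lfloor n/2\rfloor=\rank G$ pairs. Choose one representative of each pair.
\item The characters of order dividing $2$ take values $\pm1$, and there are at most $n$ of them.
\end{itemize}
Thus $A$ embeds into $\prod_{\text{reps}}\chi(A)\times\{\pm1\}^{n}$, which gives $|A|\le 2^n|E|^{\lfloor n/2\rfloor}$.

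Combining the cases, $|R_E^G|\le J(n)\,2^n|E|^{\rank G}$, as claimed.

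The main obstacle is reducing from $A$ to a set of at most $\rank G$ characters with controlled kernel. For $\SL_n$ this relies on the determinant relation. For the orthogonal and symplectic groups it relies on the inversion-pairing of eigenspaces, and the self-dual characters of order $2$ have to be absorbed into the constant. I would also verify that Jordan's theorem is applied over $\overline{k}$ and needs only $\operatorname{char}k=0$, not any further assumption on $k$.
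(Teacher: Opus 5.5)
Your proposal is correct and follows essentially the same route as the paper: Jordan's theorem gives a bounded-index abelian subgroup, which is simultaneously diagonalized over $\overline{k}$, and then at most $\rank G$ characters are selected (via the determinant relation for $\SL_n$, and via the $\chi\leftrightarrow\chi^{-1}$ pairing of eigenspaces for the form-preserving groups), with the order-$2$ characters absorbed into a factor $2^n$. Your direct bound $|\chi(A)|\le|Ae|\le|E|$ is just an unpackaged form of the paper's orbit-counting Lemma~\ref{lem:orbit_counting}, which gets $|A|\le|K|\,|E|^s$ from the fibers of $a\mapsto(av_1,\dots,av_s)$, and you arrive at the same constant $J(n)\,2^n$.
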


\medskip

\begin{table}[htbp]	
	\centering
	\begingroup
	\renewcommand{\arraystretch}{1.25}
	\begin{tabular}{|c|c|}
		\hline
		\(G\) & \(\operatorname{rank} G\) \\
		\hline
		\(\GL_n\) & \(n\) \\
		\hline
		\(\SL_n\) & \(n-1\) \\
		\hline
		\(\OO(V,B)\) & \(\left\lfloor n/2 \right\rfloor\) \\
		\hline
		\(\SO(V,B)\) & \(\left\lfloor n/2 \right\rfloor\) \\
		\hline
		\(\Sp(V,\omega)\) & \(n/2\) \\
		\hline
	\end{tabular}
	\endgroup
	\caption{Absolute ranks of the classical groups}
\label{Tab:classical-groups-ranks}	
\end{table}

\medskip

The proof is in Section \ref{Sec:char_0}.

For each of the groups $G$ considered here, the exponent 
$\rank G$ in (\ref{eq:bound_rank}) is sharp (in the sense that it cannot be replaced by a smaller exponent in the statement of Theorem \ref{Thm:bound_char_0_rank}) ; see Example \ref{Exa:sharp_char_0_rank}. 

\begin{remark}
One can compare the exponents for $|R_E^G|$ listed in Table
\ref{Tab:summary_results} with the (sharp) exponents for $|R_E^G|$ in characteristic zero, listed in Table \ref{Tab:classical-groups-ranks}. 
The extra power saving due to restricting to characteristic $0$ is $1-1/n$ for $\SL_n$, and $1/2$ for orthogonal groups when $n$ is odd. 
\end{remark}

\section{Examples}
\label{Sec:examples}

\subsection{Lower-bound examples for setwise stabilizers}

Recall the definition of $R_E^G$ from Section \ref{Sec:intro_setwise_stabilizers}. By \eqref{eq:comparison_R_E_T_E}, lower bounds for $R_E^G$ yield the corresponding lower bounds also for $T_{E,u}^G$.  

\begin{example}
	\label{Exa:sharp_GL_n}
	Let $G=\GL_n$. Let $k$ be an algebraically closed field, so for infinitely many $N$, the group $\mu_N=\{\zeta\in k^*\ |\ \zeta^N=1\}$ of $N$-th roots of unity in $k^*$ has order $N$. 
	Let $v_1,\dots,v_n$ be the standard basis of $V$.  Define
	\begin{equation}
		E_N=\bigcup_{i=1}^n \mu_N v_i.
		\label{eq:E_N_definition}
	\end{equation}
	Then $E_N$ spans $V$ and $|E_N|=nN$.  The finite diagonal group
	\[
	D_N:=\{\diag(\zeta_1,\dots,\zeta_n)\ |\  \zeta_i\in \mu_N\}\subset\GL_n(k)
	\]
	has order $N^n$ and stabilizes $E_N$.  Thus
	\[|R_{E_N}^{\GL_n}|\geq |D_N|=N^n=n^{-n}|E_N|^n\gg_n |E_N|^n.\] 
\end{example}

Examples \ref{Exa:positive_char_SL_n}, \ref{Exa:positive_char_SO_n}, and 
\ref{Exa:sharpness_symplectic} are inspired by \cite[Example~4]{Vietnam_BLMS}. Throughout the paper, whenever we need a basis for an orthogonal space, we follow \cite[Example~2.40]{Moonen} and work with a split basis (Example \ref{Exa:positive_char_SO_n}, Example \ref{Exa:sharp_char_0_rank}, and Section \ref{Subsec:all_principal_minors_vanish}).

\begin{example}	\label{Exa:positive_char_SL_n}
Let $G=\SL_n$. 
Let $k$ be an algebraically closed field of positive characteristic. Consider $\F_{q}\subset k$ and set
	$E=\F_{q}^n\subset V$. Then $E$ spans $V$ and $|E|=q^n$.  Moreover
	$\SL_n(\F_{q})\subset R_E^{\SL_n}$. Therefore
	\[|R_E^{\SL_n}|\geq |\SL_n(\F_q)|\asymp q^{n^2-1}=|E|^{(n^2-1)/n}=|E|^{n-1/n}. \]
\end{example}

\begin{example}
	\label{Exa:positive_char_SO_n}
Let $k$ be an algebraically closed field of positive characteristic $\neq 2$, and let $V=k^n$ with $n\geq 2$.  Let $B$ be any non-degenerate symmetric bilinear form on $V$.  Since $k$ is algebraically closed with $\operatorname{char}(k)\neq 2$, any non-degenerate symmetric bilinear form on $V$ is equivalent to $(x,y)\mapsto x^T y$ (see, for example, \cite[Theorem~8.31]{StollLinearAlgebraII}); in particular, any two non-degenerate symmetric bilinear forms on $V$ are equivalent.  Since the split form described below is also non-degenerate, after choosing a suitable basis we may write $B$ in split form.  In other words, there exists a basis $e_1,\dots,e_r,f_1,\dots,f_r$	if $n=2r$, and a basis
	$e_1,\dots,e_r,f_1,\dots,f_r,h$	if $n=2r+1$, such that
	$B(e_i,e_j)=B(f_i,f_j)=0$, $B(e_i,f_j)=\delta_{ij}$, and, in the odd-dimensional case, $B(h,h)=1$, $B(h,e_i)=B(h,f_i)=0$.
	
	Let $Q(v)=B(v,v)$ be the associated quadratic form. Use the above split basis to identify $V$ with $k^n$. For a finite subfield $\F_q\subset k$, define
	\[
	E:=\{v\in \F_q^n\ |\ Q(v)=0\}\subset V.
	\]
	
	The set $E$ spans $V$.  Indeed, the vectors $e_1,\dots,e_r,f_1,\dots,f_r$ all belong to $E$.  
	If $n$ is odd, the last basis vector $h$ belongs to the span of $E$, because $Q(h+e_1-f_1/2)=1-1=0$ gives $h+e_1-f_1/2\in E$. 
	
	Note that $|E|\asymp_n q^{n-1}$ (with $n=2r$ or $n=2r+1$, count $|E|$ according to whether the coordinates along $e_1,\dots,e_r$ are all zero). The group $\SO(V,B)(\F_q)$ preserves $E$.  Therefore
	\[
	|R_E^{\OO(V,B)}|\geq	|R_E^{\SO(V,B)}|\geq |\SO(V,B)(\F_q)|\asymp_n q^{n(n-1)/2}\asymp_n |E|^{n/2}\]
	(on the second-to-last step, we use the standard formulae for the size of the orthogonal group over finite fields; see, for example, \cite[Chapter~6]{CameronClassicalGroups}).	
\end{example}

\begin{example}
	\label{Exa:sharpness_symplectic}
Let $k$ be an algebraically closed field of positive characteristic.	
	Consider a symplectic space $(V,\omega)$ over $k$. 
	Let $n=2r$. Choose a basis $e_1,\dots,e_r,f_1,\dots,f_r$ of $V$ with $\omega(e_i,e_j)=\omega(f_i,f_j)=0$, $\omega(e_i,f_j)=\delta_{ij}$. Let $V_i=\spn(e_i,f_i)$, so $V=\oplus_{i=1}^r V_i$ (orthogonal direct sum). Moreover, $\omega$ restricts to a non-degenerate alternating form $\omega_i$ on $V_i$. 	
	
	For a finite subfield $\F_q\subset k$, consider 
	$E=\bigcup_{i=1}^r V_i(\F_q)$; here $V_i(\F_q)=\F_q e_i+\F_q f_i$. 
	Note that $E$ spans $V$ and $|E|=r(q^2-1)+1\asymp_n q^2$. The group
	\[G:=\Sp(V_1,\omega_1)(\F_q)\times\dots\times\Sp(V_r,\omega_r)(\F_q)\subset \Sp(V,\omega)\]
	preserves $E$. 
	For each $i$, we have $|\Sp(V_i,\omega_i)(\F_q)|=|\SL_2(\F_q)|\asymp q^3$; thus, $|G|\asymp_n q^{3r}$. Therefore
	\[|R_E^{\Sp(V,\omega)}|\geq |G|\asymp_n q^{3r}\asymp_n |E|^{3r/2}=|E|^{3n/4}.\]
\end{example}

\subsection{Examples for the algebraic Brascamp--Lieb conjecture}

The examples below illustrate that the conclusion \eqref{eq:Brascamp_Lieb_conclusion} of Conjecture \ref{Conj:Brascamp_Lieb} may fail if one does not impose the assumption \eqref{eq:dimension_subvariety_Brascamp_Lieb}. 

\begin{example}
\label{Exa:failure_BL_proj_axis}	
Let $k=\Q$. Let $X=\mathbb A^2_{x,y}$. Consider $\pi_1:X\to \A^2$, $\pi_1(x,y)=(x,xy)$, 	and
$\pi_2:X\to \A^1$, $\pi_2(x,y)=y$. Take $p_1=1$, $p_2=1/2$.
Then (\ref{eq:dimension_subvariety_Brascamp_Lieb}) is satisfied for $Z=X$ but fails for $Z=V(x)$ because $\dim Z=1$ while $\dim\pi_1(Z)=0$ and $\dim\pi_2(Z)=1$. Consider the sets $A_1:=\{(0,0)\}\subset\Q^2$ and $A_2=\{1,\dots, N\}\subset\Q$. The set $E=\{(0,i)\ |\ 1\leq i\leq N\}\subset X(\Q)$ satisfies $\pi_1(E)\subset A_1$ and $\pi_2(E)\subset A_2$. The bound (\ref{eq:Brascamp_Lieb_conclusion}) fails. 
\end{example}

\begin{example}
	\label{Exa:failure_BL_one_half}
Let $k=\Q$. Let
	$X=\mathbb A^2_{x,y}$, and define 
	$\pi_1:X\to \mathbb A^2$, $\pi_1(x,y)=(x,xy)$,
$\pi_2:X\to \mathbb A^2$, $\pi_2(x,y)=(y,xy)$.
Take $p_1=p_2=1/2$. Again, (\ref{eq:dimension_subvariety_Brascamp_Lieb}) is satisfied for $Z=X$. However, it fails for $Z=V(x)$ because 
$\dim\pi_1(Z)=0$ and $\dim\pi_2(Z)=1$. Taking $A_1=\{(0,0)\}\subset\Q^2$,
$A_2=\{(i,0)\ |\ 1\leq i\leq N\}\subset\Q^2$ and $E=\{(0,i)\ |\ 1\leq i\leq N\}\subset X(\Q)$ violates (\ref{eq:Brascamp_Lieb_conclusion}). 
\end{example}

\begin{example}
	\label{Exa:failure_BL_cuspidal}
Let $k=\Q$.	Let $X=\A^2_{x,y}$, and set	$q(x,y)=y^2-x^3$. Define 
	$\pi_1:X\to\A^2$, $\pi_1(x,y)=(q(x,y),xq(x,y))$,
	and $\pi_2:X\xrightarrow{\sim}\A^2$, $\pi_2(x,y)=(x+y,y)$.	Take
	$p_1=p_2=1/2$. As in the previous examples, the dimension condition is satisfied for $Z=X$. However, it fails on $Z=V(y^2-x^3)\subset X$, since $\dim \pi_1(Z)=0$. 
	Moreover, (\ref{eq:Brascamp_Lieb_conclusion}) fails with $A_1=\{(0,0)\}$, $A_2=\{(i^2+i^3, i^3)\ |\ 1\leq i\leq N\}$ because we can take $E=\{(i^2,i^3)\ |\ 1\leq i\leq N\}\subset X(\Q)$. 
\end{example}

\subsection{Examples for the stabilizer problem in characteristic zero}

The example below shows that the exponent $\rank G$ in \eqref{eq:bound_rank} cannot be improved. 

\begin{example}
	\label{Exa:sharp_char_0_rank}
	Let $k$ be an algebraically closed field of characteristic $0$, and let $V=k^n$. Let $\mu_N$ denote the group of $N$-th roots of unity in $k^*$. Let $v_1,\dots,v_n$ be the standard basis of $V$. 
	
	The case $G=\GL_n$ has already been treated in Example \ref{Exa:sharp_GL_n}. We now adapt the construction for the other groups under consideration. 
	
	Let $G=\SL_n$. 		
	Take $E_N$ as in \eqref{eq:E_N_definition} in Example \ref{Exa:sharp_GL_n}; again, $|E_N|=nN$.  
	The subgroup
	\[
	D_N^{\SL_n}=\{\diag(\zeta_1,\dots,\zeta_n)\ |\ \zeta_i\in \mu_N,\ \zeta_1\cdots \zeta_n=1\}\subset\SL_n(k)
	\]
	stabilizes $E_N$ setwise and has order $N^{n-1}$.  We obtain
	\[|R_{E_N}^{\SL_n}|\geq |D_N^{\SL_n}|= N^{n-1}=n^{-(n-1)}|E_N|^{n-1}\gg_n |E_N|^{n-1}.\]
	
	Let $G=\SO(V,B)$. Let $r=\lfloor n/2\rfloor$. As in  Example \ref{Exa:positive_char_SO_n},
	there exists a basis $v_1=e_1,\dots,v_r=e_r, v_{r+1}=f_1,\dots, v_{2r}=f_r$ if $n$ is even and
	$v_1=e_1,\dots,v_r=e_r, v_{r+1}=f_1,\dots, v_{2r}=f_r, v_{2r+1}=h$ if $n$ is odd such that
	$B(e_i,e_j)=B(f_i,f_j)=0$, $B(e_i,f_j)=\delta_{ij}$, and, in the odd-dimensional case, $B(h,h)=1$, $B(h,e_i)=B(h,f_i)=0$.
	
	Take $E_N$ as in \eqref{eq:E_N_definition} in Example \ref{Exa:sharp_GL_n} if $n$ is even, and $E_N=\{h\}\cup\bigcup_{i=1}^{2r}\mu_N v_i$ if $n$ is odd. 
	
	In both cases $E_N$ spans $k^n$ and $|E_N|\leq 2rN+1$.  The subgroup
	\[
	A_N=\{e_i\mapsto \zeta_i e_i,\ f_i\mapsto \zeta_i^{-1}f_i,\ h\mapsto h\text{ if }n\text{ is odd}\ |\ \zeta_i\in \mu_N\}\subset\SO(V,B)
	\]
	stabilizes $E_N$ setwise and has order $N^r$.  Therefore
	\[
	|R_{E_N}^{\OO(V,B)}|\geq |R_{E_N}^{\SO(V,B)}|\geq |A_N|= N^r\gg_n |E_N|^r.
	\]
	
	Finally, let $G=\Sp(V,\omega)$ with $n=2r$. Let $v_1=e_1,\dots,v_r=e_r,v_{r+1}=f_1,\dots,v_{n}=f_r$ be a basis of $V$ such that $\omega(e_i,e_j)=\omega(f_i,f_j)=0$, $\omega(e_i,f_j)=\delta_{ij}$. Take $E_N$ as in \eqref{eq:E_N_definition} in Example \ref{Exa:sharp_GL_n}.
	Then $E_N$ spans $V$ and $|E_N|=nN$. The subgroup
	\[
	A_N=\{e_i\mapsto \zeta_i e_i,\ f_i\mapsto \zeta_i^{-1}f_i:\zeta_i\in \mu_N\}\subset\Sp(V,\omega)
	\]
	has order $N^r$ and stabilizes $E_N$.  Hence
	$|R_{E_N}^{\Sp(V,\omega)}|\geq |A_N|=N^r=n^{-r}|E_N|^r\gg_n |E_N|^r$.
\end{example}

\section{The special linear group}
\label{Sec:SL_n}

\subsection{Setup for the proposition with multiplicities}

The proof of Theorem \ref{Thm:SL_n_statement_frames} is by induction on $n$. However,  to enable the inductive step, we formulate and prove a stronger statement (Proposition \ref{prop:weighted-general}) with additional data.

Let $V$ be a $d$-dimensional vector space over $k$, and let
$\Delta:V^d\to k$
be a fixed alternating $d$-linear form.  A finite weighted set in $V$ 
is a subset $A\subset V$ together with a function $\mu:A\to\mathbb{N}$. 
We write $W:=\sum_{a\in A}\mu(a)$ for the total weight of $A$, and
$M:=\max_{a\in A}\mu(a)$ for the maximal weight of an element of $A$. If $A=\emptyset$, set $M=0$. 

If $W=0$, all estimates below are trivial, so we usually assume $W>0$.  
Then $1\leq M\leq W$.

Fix an integer $r$ with $0\leq r\leq d-1$.  Put $q:=d-r$.
Let $p_1,\dots,p_r\in V$ be linearly independent fixed vectors (when $r=0$, there are no fixed $p_i$).  Let $u\in V$ be a fixed nonzero vector, and let $\tau\in k^*$.

Define
\begin{equation}\label{eq:def_N_d_r}
	N_{d,r}(A; p_1,\dots,p_r;u;\tau)
	:=
	\sum_{\substack{x_1,\dots,x_q\in A\\
			\Delta(p_1,\dots,p_r,x_1,\dots,x_q)=\tau\\
			u\in \spn(x_1,\dots,x_q)}}
	\mu(x_1)\cdots \mu(x_q);
\end{equation}
this is the weighted number of $q$-tuples $(x_1,\dots,x_q)\in A^q$ satisfying
$\Delta(p_1,\dots,p_r,x_1,
	\dots,x_q)=\tau$ 
and
$u\in \spn(x_1,\dots,x_q)$. 

The central weighted proposition is the following.

\begin{proposition}\label{prop:weighted-general}
	For each pair $(d,r)$ with $d\geq 1$ and $0\leq r\leq d-1$, there exists a constant $C_{d,r}>0$, depending only on $d$ and $r$, such that
	\begin{equation}
		N_{d,r}(A; p_1,\dots,p_r;u;\tau)
		\leq
		C_{d,r} W^{q-1/d}M^{1/d},
		\qquad\text{where $q=d-r$},
		\label{eq:bound_prop_weights}
	\end{equation}
	for every field $k$, every $d$-dimensional $k$-vector space $V$, every  alternating $d$-linear form $\Delta$, every finite weighted set $A\subset V$, every independent list $p_1,
	\dots,p_r$, every nonzero $u\in V$, and every $\tau\in k^*$.
\end{proposition}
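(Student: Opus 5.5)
The plan is induction on $q=d-r$ with $d$ fixed (equivalently, downward induction on $r$ from $r=d-1$ to $r=0$). The idea is to peel off one vector $x_1$ at a time: fixing it turns the count $N_{d,r}$ into an instance of $N_{d,r+1}$. The condition $u\in\spn(x_1,\dots,x_q)$ is what keeps the base case nontrivial. If $\Delta\equiv 0$, every count vanishes because $\tau\neq 0$, so throughout we may assume $\Delta$ is a nonzero alternating $d$-form, i.e.\ a nonzero multiple of the determinant. In particular $\Delta(v_1,\dots,v_d)\neq 0$ if and only if $v_1,\dots,v_d$ is a basis.

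\emph{Base case $q=1$, i.e.\ $r=d-1$.} The sum runs over $x_1\in A$ with $\Delta(p_1,\dots,p_{d-1},x_1)=\tau$ and $u\in\spn(x_1)$. The second condition forces $x_1=\lambda u$ for some $\lambda\in k^*$. Then the first condition reads $\lambda\,\Delta(p_1,\dots,p_{d-1},u)=\tau$. If $\Delta(p_1,\dots,p_{d-1},u)=0$ there is no solution. Otherwise $\lambda$ is uniquely determined, so there is at most one admissible $x_1$. Hence
\[
N_{d,d-1}\leq M\leq W^{1-1/d}M^{1/d},
\]
using $M\leq W$. So the bound holds with $C_{d,d-1}=1$.

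\emph{Inductive step, $q\geq 2$.} Split the sum in \eqref{eq:def_N_d_r} according to the value $a=x_1$. If $a\in\spn(p_1,\dots,p_r)$, then $\Delta(p_1,\dots,p_r,a,x_2,\dots,x_q)=0\neq\tau$, so these terms contribute nothing. For every other $a\in A$, the list $p_1,\dots,p_r,a$ is linearly independent of length $r+1\leq d-1$. The inner sum over $(x_2,\dots,x_q)$ is then exactly
\[
N_{d,r+1}(A;p_1,\dots,p_r,a;u;\tau),
\]
because the conditions $\Delta(p_1,\dots,p_r,a,x_2,\dots,x_q)=\tau$ and $u\in\spn(a,x_2,\dots,x_q)$ are precisely the defining conditions with $r+1$ fixed vectors and $q-1$ free ones. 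The weight $\mu(a)$ factors out. By the induction hypothesis each inner sum is at most $C_{d,r+1}W^{(q-1)-1/d}M^{1/d}$, uniformly in $a$. Summing with weights $\mu(a)$ gives
\[
N_{d,r}\leq \sum_{a\in A}\mu(a)\,C_{d,r+1}W^{q-1-1/d}M^{1/d}=C_{d,r+1}W^{q-1/d}M^{1/d}.
\]
So we may take $C_{d,r}=C_{d,r+1}$, and all constants equal $1$.

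The only delicate point is the base case. There the span condition on $u$ collapses the admissible set to at most one point of weight at most $M$, and this is where the saving $W^{-1/d}M^{1/d}$ is produced. Everything else is bookkeeping: one must check that the fixed list stays independent, which is automatic whenever the term is nonzero since $\tau\neq 0$. One must also check that the hypotheses "$u\neq 0$, $\tau\in k^*$" are preserved under the induction, which they are.

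I expect the real work of the section to lie not in this proposition but in the later passage from $N_{d,0}$ to the unrestricted count of Theorem \ref{Thm:SL_n_statement_frames}. That step needs an additional argument to supply the vector $u$, for instance by pigeonholing over lines through the origin or projecting to a quotient $V/ku$ and applying the proposition in dimension $d-1$ with pushed-forward weights. This is also where the factor $M^{1/d}$ becomes essential.
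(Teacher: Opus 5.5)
Your inductive step fails at the identification of the inner sum at $x_1=a$ with $N_{d,r+1}(A;p_1,\dots,p_r,a;u;\tau)$. In \eqref{eq:def_N_d_r} the span condition involves only the \emph{free} vectors. For the fixed list $p_1,\dots,p_r,a$ it therefore reads $u\in\spn(x_2,\dots,x_q)$. Your inner sum only imposes $u\in\spn(a,x_2,\dots,x_q)$, which is strictly weaker: it is equivalent to $\bar u\in\spn(\bar x_2,\dots,\bar x_q)$ in $V/\spn(a)$, and it is vacuous when $a\in ku$. So the inner sum can be much larger than $N_{d,r+1}$, and your uniform-in-$a$ bound is false.

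Already for $d=2$, $r=0$ it breaks. Pick $a,b$ with $\Delta(a,b)=\tau$ and let $A=\{a\}\cup\{b+t_ja:\ 1\le j\le N\}$ with distinct $t_j\in k$ and all weights $1$. The inner sum at $x_1=a$ equals $N$, since every $b+t_ja$ satisfies $\Delta(a,x_2)=\tau$ and the span condition is automatic in dimension $2$. By contrast, $N_{2,1}(A;a;u;\tau)\le 1$, and the $(2,1)$ bound gives only $C_{2,1}\sqrt{N+1}$. The total $N_{2,0}$ here is $O(N)$, so the proposition survives. It survives only because few $a$ can have large inner sums, and your argument never uses this. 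Hence the constants are not all $1$ by bookkeeping, and the saving $W^{-1/d}M^{1/d}$ is not produced by the base case alone.

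Your closing remark is also off. For $r=0$, $\Delta(x_1,\dots,x_n)=\tau\ne 0$ forces a basis, so the span condition is automatic and Theorem \ref{Thm:SL_n_statement_frames} follows immediately. The real work is in this proposition.

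What is missing is the following. After fixing $x_1=a\notin ku\cup\spn(p_1,\dots,p_r)$, the correct reduction is to the quotient $V/\spn(a)$ with pushed-forward weights. This lowers $d$ and keeps $r$. The $(d-1,r)$ bound then gives $W^{(q-1)-1/(d-1)}\bar M^{1/(d-1)}$, where $\bar M$ is the largest weight of an affine line parallel to $a$. This yields $W^{q-1-1/d}M^{1/d}$ exactly when every affine line has weight at most $H=W^{1/d}M^{(d-1)/d}$. The same bound controls the terms with $a\in ku$, via $\mu(ku)\le H$.

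The paper therefore splits into two cases. If no line is rich, it uses the quotient argument above. If a rich line $L=a+\spn(b)$ exists, it deletes $L$ and uses strong induction on $W$ for the remainder. It then counts tuples meeting $L$ according to $Z=(x_2,\dots,x_q)$.
\begin{itemize}
\item Either the affine-linear conditions in $\lambda$ allow at most one point of $L$, costing $MW^{q-1}\le hW^{q-1-1/d}M^{1/d}$.
\item Or all conditions hold identically in $\lambda$. Then Lemma \ref{lem:line-identity-general} gives $u\in\spn(Z)$ or $b\in\spn(Z)$. Only at this point is a genuine $(d,r+1)$ count available, with fixed list $p_1,\dots,p_r,a$ and distinguished vector $u$ or $b$.
\end{itemize}
The inequality $(W-h)^{\alpha}+hW^{\alpha-1}\le W^{\alpha}$ then closes the induction. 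Your outline lacks the quotient step, the rich-line dichotomy, and this lemma.
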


The proof of Proposition \ref{prop:weighted-general} will be given in Section \ref{Sec:pr_weighted_prop_technical}. Theorem \ref{Thm:SL_n_statement_frames} follows immediately from Proposition \ref{prop:weighted-general}: 

\begin{proof}[Proof of Theorem \ref{Thm:SL_n_statement_frames}]
	Take $d=n$ and $V=k^n$. Let $\Delta$ be the usual determinant form, and let $A=E$ with all weights equal to $1$. Then $W=|E|$ and $M=1$. Take $r=0$ (so $q=n$), and let $u\in V$ be any nonzero vector. Notice that $\det(x_1,\dots,x_n)=\tau$ implies that $x_1,\dots,x_n$ is a basis of $V$, hence the inclusion $u\in\spn(x_1,\dots,x_n)$ is automatic, once the determinant condition is satisfied. So the count in Theorem \ref{Thm:SL_n_statement_frames} is exactly $N_{n,0}(E;u;\tau)$. Proposition \ref{prop:weighted-general} gives
	\[N_{n,0}(E;u;\tau)\leq C_{n,0}|E|^{n-1/n}.\qedhere\]
\end{proof}

\subsection{Linear algebra preparations}
\label{Sec:preparations_for_weighted_proposition}

\begin{lemma}\label{lem:span-test}
Let $0\leq r\leq d-1$.	Assume
	$
	\Delta(p_1,
	\dots,p_r,x_1,
	\dots,x_q)\neq 0.
	$
Then a vector $u\in V$ lies in $\spn(x_1,
	\dots,x_q)$ if and only if, for every $1\leq i\leq r$,
	\begin{equation}\label{eq:span-test-equations}
		\Delta(p_1,
		\dots,p_{i-1},u,p_{i+1},
		\dots,p_r,x_1,
		\dots,x_q)=0.
	\end{equation}
\end{lemma}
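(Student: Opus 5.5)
The nonvanishing hypothesis says that $p_1,\dots,p_r,x_1,\dots,x_q$ form a basis of $V$. I would expand $u$ in this basis, write down what each expression \eqref{eq:span-test-equations} evaluates to, and read off both directions.

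Since $\Delta$ is a nonzero alternating $d$-linear form and $\Delta(p_1,\dots,p_r,x_1,\dots,x_q)\neq 0$, the $d$ vectors $p_1,\dots,p_r,x_1,\dots,x_q$ are linearly independent. (An alternating form vanishes on any linearly dependent tuple.) As $\dim V=d$, they form a basis of $V$. So we can write uniquely
\[
u=\sum_{j=1}^r a_j p_j+\sum_{l=1}^q b_l x_l,\qquad a_j,b_l\in k.
\]
Moreover, $u\in\spn(x_1,\dots,x_q)$ if and only if $a_1=\dots=a_r=0$, by uniqueness of coordinates.

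Next, fix $i$ with $1\leq i\leq r$ and substitute the expansion of $u$ into the $i$-th slot of $\Delta(p_1,\dots,p_{i-1},\,\cdot\,,p_{i+1},\dots,p_r,x_1,\dots,x_q)$. By multilinearity this is a sum of terms, one for each basis vector appearing in $u$. Every term with $p_j$ for $j\neq i$, or with $x_l$, contains a repeated vector, so it vanishes because $\Delta$ is alternating. The only surviving term is
\[
a_i\,\Delta(p_1,\dots,p_r,x_1,\dots,x_q).
\]
Since $\Delta(p_1,\dots,p_r,x_1,\dots,x_q)\neq 0$, the $i$-th equation in \eqref{eq:span-test-equations} holds if and only if $a_i=0$. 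Taking all $i$ together, the equations hold if and only if all $a_i=0$, which is equivalent to $u\in\spn(x_1,\dots,x_q)$.

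There is no real obstacle here. The only point needing care is the claim that a nonzero value of an alternating form forces linear independence. This is immediate: if one vector of the tuple is a combination of the others, expanding by multilinearity gives a sum of terms each containing a repeated vector, all of which vanish. When $r=0$ the statement is vacuous, because the $x_i$ then form a basis and the span condition holds automatically. This case is consistent with the proof above, since there are no equations and no coordinates $a_i$.
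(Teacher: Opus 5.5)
Your proposal is correct and follows essentially the same argument as the paper. Like the paper, you use the nonvanishing of $\Delta$ to get a basis, expand $u$ in it, and observe that the $i$-th expression in \eqref{eq:span-test-equations} equals $a_i\,\Delta(p_1,\dots,p_r,x_1,\dots,x_q)$, so all equations hold exactly when every $a_i=0$.
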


\begin{proof}
The nonvanishing of $\Delta$ immediately implies that
	$p_1,\dots,p_r,x_1,\dots,x_q$ is a basis of $V$.  Write $u$ in this basis:
	\[
	u=\alpha_1p_1+
	\cdots+\alpha_rp_r+\beta_1x_1+\cdots+\beta_qx_q.
	\]
	For fixed $i$, substitute this expression into the left-hand side of \eqref{eq:span-test-equations}. 
All the $p_j$ with $j\neq i$, as well as all $x_j$ are already present among the arguments in $\Delta$.  Thus
	\[
	\Delta(p_1,
	\dots,p_{i-1},u,p_{i+1},
	\dots,p_r,x_1,
	\dots,x_q)
	=
	\alpha_i\Delta(p_1,
	\dots,p_r,x_1,
	\dots,x_q).
	\]
	Since the determinant on the right is nonzero, equation \eqref{eq:span-test-equations} is equivalent to $\alpha_i=0$.  Therefore all equations \eqref{eq:span-test-equations} hold if and only if
	$
	\alpha_1=\cdots=\alpha_r=0$, or, equivalently,
	$u\in \spn(x_1,\dots,x_q)$.
\end{proof}

\begin{lemma}\label{lem:line-identity-general}
	Let $0\leq r\leq d-2$, and let $q=d-r$.  Suppose $p_1,
	\dots,p_r\in V$ are linearly independent, $u\in V$ is nonzero, and $a,b,z_2,
	\dots,z_q\in V$ satisfy
	\begin{equation}\label{eq:line-id-ab}
		\Delta(p_1,\dots,p_r,a,z_2,\dots,z_q)=\tau\neq 0,
		\qquad
		\Delta(p_1,\dots,p_r,b,z_2,\dots,z_q)=0,
	\end{equation}
	and, for each $1\leq i\leq r$,
	\begin{equation}\label{eq:line-id-u-ab}
		\Delta(p_1,\dots,p_{i-1},u,p_{i+1},\dots,p_r,a,z_2,\dots,z_q)=0,
		\qquad
		\Delta(p_1,\dots,p_{i-1},u,p_{i+1},\dots,p_r,b,z_2,\dots,z_q)=0.
	\end{equation}
	Then
	$u\in \spn(z_2,\dots,z_q)$ or $b\in \spn(z_2,\dots,z_q)$.
\end{lemma}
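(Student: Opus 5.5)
The plan is to use Lemma \ref{lem:span-test} to turn the hypotheses about $a$ into a spanning statement, and then to analyse the vector $b$ in the resulting basis.

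First I will apply Lemma \ref{lem:span-test} with $x_1=a$, $x_2=z_2,\dots,x_q=z_q$. Since $\Delta(p_1,\dots,p_r,a,z_2,\dots,z_q)=\tau\neq 0$, the list $p_1,\dots,p_r,a,z_2,\dots,z_q$ is a basis of $V$. The left-hand equations in \eqref{eq:line-id-u-ab} then show that $u\in\spn(a,z_2,\dots,z_q)$. So I can write
\[
u=\beta a+w,\qquad w\in\spn(z_2,\dots,z_q).
\]
If $\beta=0$, then $u\in\spn(z_2,\dots,z_q)$ and we are done. From now on I assume $\beta\neq 0$.

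Next I expand $b$ in the same basis:
\[
b=\sum_j \gamma_j p_j+\delta a+w',\qquad w'\in\spn(z_2,\dots,z_q).
\]
The second equation of \eqref{eq:line-id-ab} gives $0=\delta\tau$, so $\delta=0$. The goal is now to show that every $\gamma_i$ vanishes, because then $b=w'\in\spn(z_2,\dots,z_q)$.

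To get this, I use the right-hand equations of \eqref{eq:line-id-u-ab}, fixing one index $i$. In $\Delta(p_1,\dots,u,\dots,p_r,b,z_2,\dots,z_q)$, with $u$ in the $i$-th slot, I substitute $u=\beta a+w$. The term coming from $w$ vanishes, since $w$ is a combination of the $z$'s, which already appear as arguments. Hence the expression equals $\beta\,\Delta(p_1,\dots,a,\dots,p_r,b,z_2,\dots,z_q)$, with $a$ in the $i$-th slot. Now I substitute the expansion of $b$. Every term of $b$ is killed by repetition except $\gamma_i p_i$: the $p_j$ with $j\neq i$ are present, $a$ is present, and the $z$'s are present. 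This leaves
\[
\beta\gamma_i\,\Delta(p_1,\dots,a,\dots,p_r,p_i,z_2,\dots,z_q).
\]
Swapping $a$ and $p_i$ shows this equals $-\beta\gamma_i\tau$. It must be zero, and $\beta\neq 0$, $\tau\neq 0$, so $\gamma_i=0$ for every $i$.

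There is no real obstacle here; the argument is bookkeeping in the basis. The only care needed is the sign from swapping arguments and the vanishing of terms with repeated arguments. The hypothesis $r\leq d-2$ just ensures that the list $z_2,\dots,z_q$ is meaningful, since it requires $q\geq 2$.
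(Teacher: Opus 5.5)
Your proof is correct and is essentially the paper's argument: you expand $u$ and $b$ in the basis $p_1,\dots,p_r,a,z_2,\dots,z_q$, kill the $a$-coordinate of $b$ and the $p$-coordinates of $u$, and reduce the remaining equations to $\beta\gamma_i\tau=0$ (you even pin down the sign as $-1$ where the paper writes $\pm$). The differences are cosmetic. You cite Lemma \ref{lem:span-test} to get $u\in\spn(a,z_2,\dots,z_q)$, whereas the paper redoes that coordinate computation inline. You also split on the $a$-coefficient of $u$ before the last step rather than after.
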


\begin{proof}
	Let
	$S:=\spn(z_2,\dots,z_q)$.
	By the first equation in \eqref{eq:line-id-ab}, the list
	$p_1,\dots,p_r,a,z_2,\dots,z_q$
	is a basis of $V$.  Write
	\[
	b=b_P+\beta a+s_b,
	\qquad
	u=u_P+\alpha a+s_u,
	\]
	where $b_P,u_P\in \spn(p_1,
	\dots,p_r)$ and $s_b,s_u\in S$.  The second equation in \eqref{eq:line-id-ab} forces $\beta=0$, because only the $a$-component of $b$ contributes to $\Delta(p_1,\dots,p_r,b,z_2,\dots,z_q)$.
	Similarly, the first equation in \eqref{eq:line-id-u-ab}, for each $i$, forces all coordinates of $u_P$ to be zero.  
	
	Write
	$b_P=\gamma_1p_1+\cdots+\gamma_rp_r$. 
	Now substitute $u=\alpha a+s_u$ and $b=b_P+s_b$ into the second equation in \eqref{eq:line-id-u-ab}.  For a fixed $i$, all terms vanish except the term in which $u$ contributes $\alpha a$ and $b_P$ contributes $\gamma_i p_i$.  Therefore
	\[
	0=
	\Delta(p_1,
	\dots,p_{i-1},u,p_{i+1},
	\dots,p_r,b,z_2,
	\dots,z_q)
	=
	\pm \alpha\gamma_i\tau.
	\]
	Since $\tau\neq 0$, we have
	$\alpha\gamma_i=0$ for every $i$.
	If $\alpha=0$, then $u=s_u\in S$.  If $\alpha\neq 0$, then all $\gamma_i=0$, so $b=s_b\in S$.  
\end{proof}

\subsection{Proof of the proposition with multiplicities}
\label{Sec:pr_weighted_prop_technical}

We prove Proposition \ref{prop:weighted-general} by induction on the ambient dimension $d$. For a fixed $d$, we use descending induction on $r$. For fixed $d$ and $r$, we perform strong induction on $W$. We can assume that $\Delta$ is not identically $0$. 

Suppose $d=1$. Then necessarily $r=0$ and $q=1$. Set $C_{1,0}=1$. The determinant condition is a nonzero linear equation in one variable, so it selects at most one point of the weighted set.  Thus
$N_{1,0}(A;u;\tau)\leq M=C_{1,0}W^{1-1}M.$

Now fix $d\geq 2$, and assume that Proposition \ref{prop:weighted-general} holds in all dimensions $<d$. We prove the estimates in dimension $d$ by descending induction on $r$.

For the base case $r=d-1$, we set
$C_{d,d-1}=1$. Here $q=1$.  The conditions are
$\Delta(p_1,\dots,p_{d-1},x)=\tau$ and $u\in \spn(x)$.
Since $u\neq 0$, the span condition forces $x=\lambda u$ for some scalar $\lambda$, and the determinant equation determines at most one value of $\lambda$.  Hence
\[N_{d,d-1}(A;p_1,\dots,p_{d-1};u;\tau)\leq M\leq W^{1-1/d}M^{1/d}\]
(using $M\leq W$ on the last step).

Now suppose $0\leq r\leq d-2$, and suppose that the constant $C_{d,r+1}$ and the estimate for the pair $(d,r+1)$ have already been fixed and proved.  We first define the constant for the pair $(d,r)$.  Set $q=d-r$, and define
\[
K_{d,r}:=1+C_{d-1,r},\qquad
B'_{d,r}:=2C_{d,r+1},\qquad
B_{d,r}:=q(1+B'_{d,r}),\qquad
C_{d,r}:=\max\{K_{d,r},B_{d,r}\}.
\]

We prove by strong induction on the integer $W$ that \eqref{eq:bound_prop_weights} holds for every choice of the data in Proposition \ref{prop:weighted-general} with fixed pair $(d,r)$ and total weight $W$. The case $W=0$ is trivial.  Assume $W>0$, and assume that \eqref{eq:bound_prop_weights} holds for every such choice of data with total weight $W'<W$.  Now fix a choice of the data in Proposition \ref{prop:weighted-general}, still with the fixed pair $(d,r)$, such that $A$ has total weight $W$, and let $M$ be the maximum point weight of $A$.  

Define the weight of an affine line $L\subset V$ as
\[
\mu(L):=\sum_{a\in A\cap L}\mu(a).
\]

We will split into cases according to whether there exists an affine line of weight greater than
\[
H:=W^{1/d}M^{(d-1)/d}.
\]
Call such a line rich. If no affine line is rich, then fixing $x_1=a$ gives a direct estimate when $a\in\spn(u)$, while for $a\notin\spn(u)$ we pass to the quotient $V/\spn(a)$ and apply the inductive hypothesis in dimension $d-1$, using that each point in the quotient will have induced weight at most $H$. If a rich affine line exists, we remove it from the weighted set, apply induction to the complement, and estimate separately the tuples meeting the removed line. The latter mechanism is reminiscent of pruning arguments in incidence geometry (see, for example \cite{pinned_distances}). 

\paragraph{Case 1: no rich affine line.}
Assume that
$\mu(L)\leq H$ for every affine line $L\subset V$.  We count tuples by fixing the first variable $x_1=a$ and distinguishing between  $a\in\spn(u)$ and $a\notin\spn(u)$. 

First, the contribution to \eqref{eq:def_N_d_r} from tuples with $a\in\spn(u)$ is 

\begin{align*}
	\sum_{a\in A\cap ku}\mu(a)
	\sum_{\substack{x_2,\dots,x_q\in A\\ \text{conditions hold}}}
	\mu(x_2)\cdots\mu(x_q) 
	&\leq \sum_{a\in A\cap ku}\mu(a) \sum_{x_2,\dots,x_q\in A}\mu(x_2)\dots\mu(x_q)\\
	&= \sum_{a\in A\cap ku}\mu(a) W^{q-1}\\
	&\leq HW^{q-1} &&\text{because }\mu(ku)\leq H\\
	&=W^{q-1+1/d}M^{(d-1)/d} &&\text{by the definition of }H\\
	&\leq W^{q-1/d}M^{1/d} &&\text{because }M\leq W.
\end{align*}

Next, we count the contribution from tuples with $a\notin \spn(u)$. If $a\in \spn(p_1,\dots,p_r)$, then the determinant condition is impossible with $x_1=a$, so there is no contribution.  We may therefore assume
$a\notin \spn(u)$
and $a\notin \spn(p_1,\dots,p_r)$.

Let
\[
\pi_a:V\to \overline V:=V/\spn(a)
\]
be the quotient map.  The images $\overline{p_1},\dots,\overline{p_r}$ of
$p_1,\dots,p_r$ are linearly independent, and $\overline{u}:=\pi_a(u)\neq 0$.  The form $\Delta$ induces an alternating $(d-1)$-linear form $\overline\Delta$ on $\overline V$ by inserting $a$ into $\Delta$ on the $(r+1)$-st slot.

Give each point $\overline y\in \overline{A}:=\pi_a(A)$ the quotient weight
\[
\overline\mu(\overline y):=\sum_{x\in A:\ \pi_a(x)=\overline y}\mu(x).
\]
The quotient weighted set $\overline{A}$ has total weight $W$, and every quotient point has weight at most $H$, because 
each fiber of $\pi_a$ is an affine line parallel to $a$.

Now fix $x_1=a$ with $a$ as above. Then the inner sum in \eqref{eq:def_N_d_r} satisfies
\begin{align*}
	\sum_{\substack{x_2,\dots,x_q\in A\\ \Delta(p_1,\dots,p_r,a,x_2,\dots,x_q)=\tau\\ u\in\spn(a,x_2,\dots,x_q)}}	\mu(x_2)\dots\mu(x_q)
	&=\sum_{\substack{\overline{x_2},\dots,\overline{x_q}\in \overline{A}\\ \overline{\Delta}(\overline{p_1},\dots,\overline{p_r},\overline{x_2},\dots,\overline{x_q})=\tau\\ \overline{u}\in\spn(\overline{x_2},\dots,\overline{x_q})}}	
	\overline{\mu}(\overline{x_2})\dots\overline{\mu}(\overline{x_q})\\
&=N_{d-1,r}(\overline{A};\overline{p_1},\dots,\overline{p_r};\overline{u};\tau) &&\text{by the definition \eqref{eq:def_N_d_r}}\\
	&\leq C_{d-1,r} W^{(q-1)-1/(d-1)}H^{1/(d-1)} &&\text{by the inductive hypothesis}\\
	&=C_{d-1,r}W^{q-1-1/d}M^{1/d} &&\text{by the definition of $H$}. 
\end{align*}

With this bound for each inner sum, estimate $\sum_{a\in A}\mu(a)\leq W$ to obtain the bound
$C_{d-1,r}W^{q-1/d}M^{1/d}$
for the terms in \eqref{eq:def_N_d_r} with $x_1=a\notin\spn(u)$. 

Combining the estimates above for the two types of terms,
\[N_{d,r}(A;p_1,\dots,p_r;u;\tau)\leq 
(1+C_{d-1,r})W^{q-1/d}M^{1/d}
=K_{d,r}W^{q-1/d}M^{1/d}
\leq C_{d,r}W^{q-1/d}M^{1/d}.
\]

\paragraph{Case 2: a rich affine line exists.}
Suppose that an affine line
$L=a+\spn(b)$,
where $b\neq 0$, has weight
\[
h:=\mu(L)>H.
\]
Let $A'$ be obtained from $A$ by deleting all points on $L$.  Its total weight is $W-h<W$, and its maximum point weight is at most $M$.  By the strong induction hypothesis,
\begin{equation}\label{eq:general-induction-Aprime}
	N_{d,r}(A';p_1,\dots,p_r;u;\tau)
	\leq
	C_{d,r}(W-h)^{q-1/d}M^{1/d}.
\end{equation}

It remains to count tuples touching $L$. By symmetry among the $q$ variables, it is enough to count tuples with $x_1\in L$ and then multiply by $q$.   

\noindent\textbf{Claim.} The total contribution to \eqref{eq:def_N_d_r} of tuples with $x_1\in L$ is at most
\[
(1+B'_{d,r})hW^{q-1-1/d}M^{1/d}.
\]

\begin{proof}[Proof of Claim]
	Fix a tuple $Z=(x_2,\dots,x_q)\in A^{q-1}$. 
	For $x_1=a+\lambda b$, the determinant equation
	\begin{equation}
		\label{eq:det_equation_affine_linear}	
		\Delta(p_1,\dots,p_r,x_1,Z)=\tau
	\end{equation}
	is affine-linear in $\lambda$. Whenever the determinant equation holds, Lemma \ref{lem:span-test} says that the span condition
	$u\in\spn(x_1,\dots,x_q)$
	is equivalent to the $r$ affine-linear equations
	\begin{equation}
		\Delta(p_1,\dots,p_{i-1},u,p_{i+1},\dots,p_r,x_1,Z)=0,
		\qquad 1\leq i\leq r.
		\label{eq:span_equation_affine_linear}
	\end{equation}
	There are two types of tuples $Z$ according to whether 
	
	\begin{itemize}
		\item[I)] The determinant equation \eqref{eq:det_equation_affine_linear} and the span equations \eqref{eq:span_equation_affine_linear} are not all identities in $\lambda$;
		\item[II)] The determinant equation \eqref{eq:det_equation_affine_linear} and the span equations \eqref{eq:span_equation_affine_linear} are all identities in $\lambda$.
	\end{itemize}

	\textbf{Contribution from tuples $Z$ of type I.}
	If $Z$ is of type I, then there exists at most one value of $\lambda$ for which \eqref{eq:det_equation_affine_linear} and \eqref{eq:span_equation_affine_linear} hold; thus, there exists at most one $x_1\in A\cap L$ such that $(x_1,Z)$ satisfies the conditions in the summation \eqref{eq:def_N_d_r}. Note that $\mu(x_1)\leq M$. So the contribution to \eqref{eq:def_N_d_r} from such tuples is at most
	\begin{align*}
		\sum_{Z\text{ of type I}}
		\sum_{\substack{x_1\in A\cap L\\ \text{conditions}}}
		\mu(x_1)\cdots\mu(x_q)
		&\leq
		M\sum_{Z\text{ of type I}}\mu(x_2)\cdots\mu(x_q)\\
		&\leq
		MW^{q-1}
		&&\text{summing over all $Z\in A^{q-1}$}\\
		&\leq
		hW^{q-1-1/d}M^{1/d}
		&&\text{using $h>H$}.
	\end{align*}
	
	\textbf{Contribution from tuples $Z$ of type II.} Let $Z$ be a type-II tuple. 
	The determinant equation \eqref{eq:det_equation_affine_linear} is the identity in $\lambda$ precisely when
	\[
	\Delta(p_1,\dots,p_r,a,Z)=\tau,
	\qquad
	\Delta(p_1,\dots,p_r,b,Z)=0.
	\]
	Similarly, the span equation indexed by $i$ is the identity in $\lambda$ precisely when both
	\[
	\Delta(p_1,\dots,p_{i-1},u,p_{i+1},\dots,p_r,a,Z)=0
	\quad\text{and}\quad
	\Delta(p_1,\dots,p_{i-1},u,p_{i+1},\dots,p_r,b,Z)=0
	\]
	hold. Therefore Lemma \ref{lem:line-identity-general} applies and gives
	\begin{equation}
		u\in\spn(Z)
		\qquad\text{or}\qquad
		b\in\spn(Z).
		\label{eq:two_alternatives_for_span_Z}
	\end{equation}
	The list $p_1,\dots,p_r,a$ is independent because
	$\Delta(p_1,\dots,p_r,a,Z)=\tau\neq 0$. 
	
	We conclude that every tuple $Z\in A^{q-1}$ of type II satisfies 
	$\Delta(p_1,\dots,p_r,a,Z)=\tau$ and \eqref{eq:two_alternatives_for_span_Z}. Below we bound the weighted number of type-II tuples $Z$ by two applications of the $(d,r+1)$ inductive hypothesis, using the list $p_1,\dots,p_r,a$ of linearly independent vectors, and either $u$ or $b$ as a distinguished vector contained in $\spn(Z)$.  Namely, we bound the contribution from type-II tuples as follows:
	\begin{align*}
		\sum_{Z\text{ of type II}}
		\sum_{\substack{x_1\in A\cap L\\ \text{conditions}}}
		\mu(x_1)\cdots\mu(x_q)
		&\leq
		h\sum_{Z\text{ of type II}}\mu(x_2)\cdots\mu(x_q)
		&&\text{using $\sum_{x_1\in A\cap L}\mu(x_1)= h$}\\
		&\leq
		\quad h\sum_{\substack{Z\in A^{q-1}\\
				\Delta(p_1,\dots,p_r,a,Z)=\tau\\
				u\in\spn(Z)}}
		\mu(x_2)\cdots\mu(x_q)\\
		&\quad +
		h\sum_{\substack{Z\in A^{q-1}\\
				\Delta(p_1,\dots,p_r,a,Z)=\tau\\
				b\in\spn(Z)}}
		\mu(x_2)\cdots\mu(x_q)
		&&\text{using \eqref{eq:two_alternatives_for_span_Z}}\\
	&=\quad hN_{d,r+1}(A;p_1,\dots,p_r,a;u;\tau)\\ 
	&\quad + hN_{d,r+1}(A;p_1,\dots,p_r,a;b;\tau)
	 &&\text{by the definition \eqref{eq:def_N_d_r}}\\	
		&\leq
		h\cdot 2C_{d,r+1}W^{q-1-1/d}M^{1/d}
		&&\text{by two $(d,r+1)$ estimates}\\
		&=
		B'_{d,r}hW^{q-1-1/d}M^{1/d}
		&&\text{by the definition of $B'_{d,r}$}.
	\end{align*}
	
	Combining this with the contribution from type-I tuples establishes the Claim.
\end{proof}

Multiplying by $q$, all tuples touching $L$ contribute at most
\begin{equation}\label{eq:touching-general}
	B_{d,r}hW^{q-1-1/d}M^{1/d}\leq C_{d,r}hW^{q-1-1/d}M^{1/d}.
\end{equation}

Combining \eqref{eq:general-induction-Aprime} and \eqref{eq:touching-general} gives
\[
N_{d,r}(A;p_1,\dots,p_r;u;\tau)
\leq
C_{d,r}\left( (W-h)^{q-1/d}
+
hW^{q-1-1/d} \right) M^{1/d}\leq C_{d,r} W^{q-1/d}M^{1/d},
\]
where the last step follows from
\[(W-h)^\alpha+hW^{\alpha-1}\leq W^\alpha\]
with $\alpha:=q-1/d$ (divide both sides by $W^\alpha$ and use that $\alpha\geq 2-1/2>1$ and $h/W\in [0,1]$).

\subsection{A short proof for the size of $R_E^{\SL_n}$}
\label{Sec:short_proof_R_E_special_linear}

Let $E\subset V$ be a finite subset that spans $V$. 
We sketch a short proof of the weaker statement $|R_E^{\SL_n}|\ll_n |E|^{n-1/n}$, due to the elegance of the argument. We can assume $n\geq 2$. 

We first prove that the set of values
\[\Delta(E):=\{\det(x_1,\dots,x_n)\ |\ (x_1,\dots,x_n)\in E^n\}\] 
has size $\gg |E|^{1/n}$. To this end, take a basis $u_1,\dots,u_n$ of $V$ with $u_i\in E$. Let $\delta:=\det(u_1,\dots,u_n)$. For a point $x\in E$, write $x=\sum_{i=1}^n a_i(x)u_i$. For each $i$, note that
\[\det(u_1,\dots,u_{i-1},x,u_{i+1},\dots,u_n)=a_i(x)\delta.\]
The left-hand side belongs to $\Delta(E)$, and $\delta\ne 0$, so $a_i(x)$ takes at most $|\Delta(E)|$ values for every $i$.
But $x$ is determined by its coordinates, hence $|E|\leq|\Delta(E)|^n$, 
giving the estimate for $|\Delta(E)|$.

Split $E^n$ as a union of the $|\Delta(E)|$ level sets of the determinant function. Take a $\tau\in \Delta(E)\setminus\{0\}$ such that the level set 
\[D_\tau:=\{(x_1,\dots,x_n)\in E^n\ |\ \det(x_1,\dots,x_n)=\tau\}\]
corresponding to $\tau$ is smallest among all level sets of nonzero determinant values. Then 
\[|E^n|
\geq \sum_{t\in\Delta(E)\setminus\{0\}} |D_t|
\geq 
|\Delta(E)\setminus\{0\}|\ |D_\tau|\geq\frac{|\Delta(E)|}{2}\ |D_\tau|\gg |E|^{1/n}\ |D_\tau|,\]
giving $|D_\tau|\ll |E|^{n-1/n}$. The second-to-last inequality holds since $|\Delta(E)|\geq 2$ (the determinant assumes the value $0$, as well as at least one nonzero value, since $E$ spans $V$). 
The fact that $R_E^{\SL_n}$ acts freely on $D_\tau$ implies $|R_E^{\SL_n}|\leq |D_\tau|\ll |E|^{n-1/n}$.  

\begin{remark}
	\label{Rem:comparison_R_E_versus_T_E_level_sets}
We compare this with the stronger statement Theorem \ref{Thm:SL_n_statement_frames}. To prove that $|R_E^{\SL_n}|\ll_n |E|^{n-1/n}$, it was sufficient to prove that there {\it exists} a level set $D_\tau$ with $\tau\neq 0$ of order $\ll_n |E|^{n-1/n}$, while Theorem \ref{Thm:SL_n_statement_frames} asserts that {\it every} level set corresponding to a nonzero value has size $\ll_n |E|^{n-1/n}$.  
\end{remark}

The exponent $1/n$ in the bound $|\Delta(E)|\gg |E|^{1/n}$ is sharp, since in the end the exponent $n-1/n$ in the bound for $|R_E^{\SL_n}|$ is sharp.

\section{Orthogonal groups in dimensions $2$ and $3$}
\label{Sec:orthogonal_n_2_3}

The conclusion of Theorem \ref{Thm_orthogonal_cases_n_2_3} in the case of $\SO(V,B)$ follows from that for $\OO(V,B)$, so we consider only $G=\OO(V,B)$.

\subsection{Case $n=2$}
\begin{lemma}
	\label{Lem_OO_n_2}
Let $V$ be a vector space of dimension $2$ over a field $k$ with $\operatorname{char}(k)\neq 2$, and let $B$ be a non-degenerate symmetric bilinear form on $V$.  Let $E\subset V$ be a finite subset. Let $T\in M_{2\times 2}(k)$ be a nonsingular symmetric matrix. Then 		\[\#\{	(m_1,m_2)\in E^2	\ |\ \Gr_{B}(m_1,m_2)=T 	\}\leq 2|E|.	\]
\end{lemma}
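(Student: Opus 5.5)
The plan is to fix the first vector $m_1\in E$ and show that there are at most $2$ vectors $m_2\in V$ (not merely in $E$) with $\Gr_B(m_1,m_2)=T$. Summing over the at most $|E|$ choices of $m_1$ then gives the bound $2|E|$. Write $T=\begin{pmatrix} t_{11}&t_{12}\\ t_{12}&t_{22}\end{pmatrix}$. For a fixed $m_1$ with $B(m_1,m_1)=t_{11}$, the conditions on $m_2$ are
\[
B(m_1,m_2)=t_{12},\qquad B(m_2,m_2)=t_{22}.
\]

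First note that $m_1\neq 0$. Indeed, if $m_1=0$, then the first row of $\Gr_B(m_1,m_2)$ vanishes, contradicting the nonsingularity of $T$. Since $B$ is non-degenerate, the linear functional $B(m_1,\cdot)$ is therefore nonzero. Hence the first condition cuts out an affine line $L=c+\spn(w)$ in the $2$-dimensional space $V$, where $w\neq 0$ spans $m_1^{\perp}$.

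Substituting $m_2=c+\lambda w$ into the second condition gives
\[
B(w,w)\lambda^2+2B(c,w)\lambda+B(c,c)-t_{22}=0.
\]
This has at most $2$ solutions in $\lambda$ unless it is identically zero. The crux is therefore to rule out the degenerate case, which is the only real obstacle. In that case $B(w,w)=0$, $B(c,w)=0$ and $B(c,c)=t_{22}$.

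I handle this case as follows. We have $B(w,w)=0$, and $B(w,m_1)=0$ by the choice of $w$. If $m_1$ and $w$ were linearly independent, then $w$ would be orthogonal to all of $V$, contradicting non-degeneracy. So $w$ is proportional to $m_1$. Moreover, $c$ is orthogonal to $w$, hence to $m_1$, which forces $t_{12}=B(m_1,c)=0$. Also $t_{11}=B(m_1,m_1)=0$, since $m_1$ is a nonzero multiple of $w$. Then the first row of $T$ is zero, contradicting $\det T\neq 0$.

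Hence the quadratic is never identically zero. So for each $m_1\in E$ there are at most two admissible $m_2$, and the bound follows. The assumption $\operatorname{char}(k)\neq 2$ is only used implicitly, through the setup of the orthogonal setting; the argument itself needs only non-degeneracy of $B$ and nonsingularity of $T$.
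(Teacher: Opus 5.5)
Your proof is correct and follows the paper's route: fix $m_1\neq 0$, parametrize the affine line $B(m_1,\cdot)=t_{12}$ as $c+\lambda w$, and rule out the quadratic in $\lambda$ vanishing identically. The only difference is in that last step: the paper uses $B(c,c)=t_{22}$ to see that $\Gr_B(m_1,c)=T$ is nonsingular, so $m_1,c$ is a basis and $w$, being orthogonal to both, lies in $V^{\perp}$; you instead use $B(w,w)=0$ to force $w$ proportional to $m_1$, which works equally well.

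One correction to your closing remark. $\operatorname{char}(k)\neq 2$ is used explicitly, when you deduce $B(c,w)=0$ from the vanishing of the coefficient $2B(c,w)$. The lemma genuinely fails in characteristic $2$: take $B(x,y)=x_1y_2+x_2y_1$ over $k=\F_{2^s}$, $T=\begin{pmatrix}0&1\\1&0\end{pmatrix}$ and $E=k^2$. Then the count is $(|k|^2-1)|k|$, which exceeds $2|E|$ once $|k|\geq 4$.
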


\begin{proof}
Let $T=(t_{ij})_{1\leq i,j\leq 2}$. 
Fix $m_1\in E$ with $B(m_1,m_1)=t_{11}$ (there are at most $|E|$ possibilities for $m_1$). We will prove that there exist at most $2$ points $m_2\in E$ such that 
$B(m_1,m_2)=t_{12}$ and $B(m_2,m_2)=t_{22}$.

If $m_1=0$, then a point $m_2$ does not exist (otherwise $t_{11}=t_{12}=0$ and $T$ would be singular). So assume $m_1\neq 0$. The map $V\to k$, $y\mapsto B(m_1,y)$ is surjective, with kernel the one-dimensional space $m_1^\perp$. Let $v$ be a basis of $m_1^\perp$. We now describe the solution set of
\begin{equation}
	B(m_1,y)=t_{12}\quad\text{and}\quad B(y,y)=t_{22}. 	
	\label{eq:orthogonal_case_n_2_conditions_m_1_m_2}	
\end{equation}	

Let $y_0$ be a solution to $B(m_1,y_0)=t_{12}$. Then the set of solutions to $B(m_1,y)=t_{12}$ is $y_0+\spn(v)$. For $y=y_0+\lambda v$, the second equation in (\ref{eq:orthogonal_case_n_2_conditions_m_1_m_2})
becomes $B(y_0+\lambda v,y_0+\lambda v)=t_{22}$, or, equivalently, 
\[B(v,v)\lambda^2+2B(y_0,v)\lambda+B(y_0,y_0)-t_{22}=0.\]
If this polynomial in $\lambda$ is not identically $0$, then it has at most $2$ roots $\lambda\in k$. Suppose the polynomial is identically $0$. In particular, $B(v,v)=0$, $B(y_0,v)=0$, and $B(y_0,y_0)=t_{22}$. Then the Gram matrix $\Gr_B(m_1,y_0)=T$ is nonsingular; in particular, $m_1,y_0$ is a basis of $V$. But the nonzero vector $v$ is orthogonal to both $m_1$ and $y_0$, hence $v\in V^\perp$, contradicting the non-degeneracy of $B$. 
\end{proof}

\subsection{Case $n=3$ with a nonzero principal minor}
\label{Subsec:Orth_n_3}

Throughout this and the next subsection, $V$ is a $3$-dimensional vector space over a
field $k$ with $\operatorname{char}(k)\neq 2$, and $B$ is a non-degenerate symmetric bilinear form on $V$.

\begin{proposition}
	\label{Prop:orthogonal_n_3_nonzero_principal_minor}
	Let $V$ be a $3$-dimensional vector space over $k$, and let $B$ be a non-degenerate symmetric bilinear form on $V$. Let $E\subset V$ be a finite subset. Let $S\in M_{3\times 3}(k)$ be a nonsingular
	symmetric matrix. Then
	\[
	\#\{(x_1,x_2,x_3)\in E^3\ |\ \Gr_B(x_1,x_2,x_3)=S\}
	\ll  |E|^{3/2},
	\]
with an absolute implied constant. 	
\end{proposition}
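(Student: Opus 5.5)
The plan is to reduce to counting pairs with a fixed nonsingular $2\times2$ Gram matrix, and to bound that pair count by a Cauchy--Schwarz/incidence argument. This follows the pattern of Lemma \ref{Lem_OO_n_2}, but in dimension $3$ the trivial count $\ll|E|^2$ for pairs is too weak.

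\textbf{Reduction to pairs.} Write $S=(s_{ij})$. As the subsection title indicates, I assume some $2\times2$ principal minor of $S$ is nonzero. After permuting the coordinates (which permutes the triple and conjugates $S$ by a permutation matrix), I may assume $T:=(s_{ij})_{1\le i,j\le 2}$ is nonsingular. Then:
\begin{itemize}
\item for a triple $(x_1,x_2,x_3)$ with $\Gr_B(x_1,x_2,x_3)=S$, the pair $(x_1,x_2)$ has $\Gr_B(x_1,x_2)=T$ and spans a non-degenerate plane $P$;
\item given $(x_1,x_2)$, the conditions $B(x_1,x_3)=s_{13}$ and $B(x_2,x_3)=s_{23}$ confine $x_3$ to an affine line $y_0+kw$ with $kw=P^\perp$;
\item since $P$ is non-degenerate, $V=P\oplus P^\perp$ is an orthogonal sum and $B(w,w)\neq0$. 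So $B(x_3,x_3)=s_{33}$ is a genuine quadratic in $\lambda$ and has at most $2$ roots.
\end{itemize}
Hence the triple count is at most $2I$, where $I:=\#\{(x_1,x_2)\in E^2: \Gr_B(x_1,x_2)=T\}$. It suffices to show $I\ll|E|^{3/2}$.

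\textbf{Cauchy--Schwarz.} For $x_2\in E$, let $C(x_2)=\{x_1\in E:\ B(x_1,x_1)=s_{11},\ B(x_1,x_2)=s_{12}\}$ and restrict to $x_2$ with $B(x_2,x_2)=s_{22}$. Then $I=\sum_{x_2}|C(x_2)|$, so
\[
I^2\le |E|\sum_{x_2}|C(x_2)|^2=|E|\cdot\#\{(x_1,x_1',x_2)\}.
\]
The diagonal terms $x_1=x_1'$ contribute $I$. For $x_1\neq x_1'$, I bound the number of admissible $x_2$ by $2$, so the off-diagonal terms contribute at most $2|E|^2$. This gives $I^2\le |E|(I+2|E|^2)$, hence $I\ll|E|^{3/2}$ with an absolute constant.

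\textbf{Main step: $x_1\neq x_1'$ leaves at most $2$ choices of $x_2$.} Such an $x_2=y$ satisfies $B(x_1,y)=B(x_1',y)=s_{12}$ and $B(y,y)=s_{22}$. I distinguish two cases.
\begin{itemize}
\item If $x_1,x_1'$ are linearly dependent, then $x_1'=cx_1$ with $c\ne1$. The linear conditions force $s_{12}(1-c)=0$, so $s_{12}=0$. Then $s_{11}(1-c^2)=0$ with $s_{11}\ne0$ (because $T$ is nonsingular), so $c=-1$. I will handle this directly: here $y$ ranges over solutions of $B(x_1,y)=0$, $B(y,y)=s_{22}$. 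This sub-case contributes at most $|E|$ choices of $x_1'$ per $x_1$ (in fact $x_1'=-x_1$ is forced), so at most $|E|^2$ in total, which is harmless.
\item If $x_1,x_1'$ are linearly independent, the linear conditions define an affine line $y_0+kw$ with $w\perp x_1,x_1'$. Restricted to the line, $B(y,y)=s_{22}$ is a polynomial in $\lambda$, so it has at most $2$ roots unless it vanishes identically. Vanishing identically would mean $B(w,w)=0$ and $B(y_0,w)=0$. But then any solution $y$ on the line satisfies $w\perp x_1$ and $w\perp y$, while $\Gr_B(x_1,y)=T$ is nonsingular. So $w$ lies in the orthogonal complement of a non-degenerate plane, which is anisotropic, contradicting $B(w,w)=0$.
\end{itemize}
This is exactly where the nonzero principal minor is used. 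I expect this degeneracy analysis to be the only delicate point; the remainder of the argument is bookkeeping.
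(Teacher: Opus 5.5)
\textbf{The gap: the all-minors-vanish case is never treated.} You prove only the case where some principal $2\times 2$ minor of $S$ is nonzero. The Proposition assumes only that $S$ is nonsingular, and the complementary case does occur. For example, $M=\begin{pmatrix}1&1&1\\1&1&-1\\1&-1&1\end{pmatrix}$ has $\det M=-4$, yet all its principal $2\times 2$ minors are $0$.

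Your strategy cannot be rescued by choosing a different pair of indices. Every pair then has a singular Gram matrix, and the pair count itself can be of order $|E|^2$. Take $Q(X,Y,Z)=X^2+2YZ$ on $k^3$; the vectors $(1,0,0),(1,1,0),(1,0,-2)$ have Gram matrix $M$. The points $(1,t,0)$ lie on a line of the quadric $\{Q=1\}$ and have pairwise inner product $1$. So $N$ of them give $N^2$ pairs with Gram matrix $\begin{pmatrix}1&1\\1&1\end{pmatrix}$, and no bound $I\ll|E|^{3/2}$ is available.

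A genuinely three-variable argument is needed. The paper's runs as follows.
\begin{itemize}
\item From $ab=p^2$, $ac=q^2$, $bc=r^2$ and $\det S\neq 0$ one gets $pqr=-abc$, with all entries nonzero.
\item Rescaling $x_2$, $x_3$ and $B$ then reduces to Gram matrix $M$, for three possibly different sets $A_1,A_2,A_3$.
\item One transports isometrically to $(k^3,X^2+2YZ)$ and parametrizes $\{Q=1\}$ by an open subset $\Omega\subset\P^1(k)\times\P^1(k)$ via the Segre map.
\item Under this parametrization, $\langle\Phi(r,s),\Phi(u,t)\rangle=1$ if and only if $r=u$ or $s=t$ (the two rulings).
\item This forces, up to a swap costing a factor $2$, $x=\Phi(r,s)$, $y=\Phi(r,\varphi(u))$, $z=\Phi(u,s)$.
\item The discrete Loomis--Whitney inequality (Lemma \ref{Lem:finite_Loomis_Whitney}) bounds the number of such $(r,s,u)$ by $\sqrt{|A_1||A_2||A_3|}$.
\end{itemize}

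\textbf{The case you do treat.} Here your argument is correct and is essentially the paper's: Lemma \ref{lem:binary-Gram} plus at most two completions of each pair. Your Cauchy--Schwarz step is the K\H{o}v\'ari--S\'os--Tur\'an bound for $K_{2,3}$-free graphs written out. Your direct handling of $x_1'=-x_1$ replaces the paper's removal of $\pm$ pairs when $s_{12}=0$.

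There is one small slip. In that dependent subcase the number of admissible $x_2$ is not bounded by $2$; it can be as large as the number of points of $E$ on the conic $\{y\in x_1^{\perp}: B(y,y)=s_{22}\}$. However, since $x_1'=-x_1$ is forced, these triples number at most $I$. That gives $I^2\le |E|(2I+2|E|^2)$, which still yields $I\ll |E|^{3/2}$.
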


We now prove Proposition \ref{Prop:orthogonal_n_3_nonzero_principal_minor} in the case when some principal $2\times 2$ minor of $S$ is nonzero. The case when all principal $2\times 2$ minors of $S$ vanish will be treated in Section \ref{Subsec:all_principal_minors_vanish}.

The argument will be based on the following pair-counting estimate.

\begin{lemma}
	\label{lem:binary-Gram}
	Let $a,b,c\in k$ with $ac-b^2\neq 0$.
Then for every finite $E\subset V$, we have
	\[
	\#\{(x,y)\in E^2:B(x,x)=a,\ B(x,y)=b,\ B(y,y)=c\}
	\ll |E|^{3/2},
	\]
	with an absolute implied constant. 
\end{lemma}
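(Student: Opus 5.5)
The plan is to bound the number of incidences in a bipartite graph by a Cauchy--Schwarz (Kővári--Sós--Turán type) argument. Let $\Gamma\subset E^2$ be the set of pairs $(x,y)$ with $B(x,x)=a$, $B(x,y)=b$, $B(y,y)=c$, and put $I:=|\Gamma|$. For $y\in E$, let $d(y):=\#\{x\in E\ |\ (x,y)\in\Gamma\}$, so that $I=\sum_{y}d(y)$. Cauchy--Schwarz gives
\[
I^2\leq |E|\sum_{y}d(y)^2 .
\]
The sum $\sum_y d(y)^2$ counts triples $(x,x',y)$ in which both $(x,y)$ and $(x',y)$ lie in $\Gamma$. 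The diagonal terms $x=x'$ contribute exactly $I$. Each off-diagonal term corresponds to a pair $x\neq x'$ together with a common neighbour $y$. So it suffices to show that a pair $x\neq x'$ has $O(1)$ common neighbours, except for a set of at most $|E|$ pairs, each of which has at most $|E|$ common neighbours. Granting this, $\sum_y d(y)^2\leq I+O(|E|^2)$. Hence $I^2\ll |E|I+|E|^3$, and therefore $I\ll |E|^{3/2}$.

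The key step is the common-neighbour count, which I will do in two cases. Fix $x\neq x'$ in $E$ with $B(x,x)=B(x',x')=a$. A common neighbour $y$ satisfies $B(x,y)=B(x',y)=b$ and $B(y,y)=c$.

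\emph{Case 1: $x,x'$ linearly independent.} The two linear conditions cut out an affine line $\ell=y_0+\spn(w)$ with $w\perp x$ and $w\perp x'$ (or the empty set). The quadratic condition restricted to $\ell$ is a polynomial of degree at most $2$ in the line parameter, so it has at most $2$ roots unless it vanishes identically. If it vanishes identically, then $B(w,w)=0$ and $B(y_0,w)=0$, so every point $y\in\ell$ satisfies $B(y,w)=0$. Pick any $y\in\ell$. Then $w$ is orthogonal to both $x$ and $y$. Since $\Gr_B(x,y)$ has determinant $ac-b^2\neq 0$, the plane $P=\spn(x,y)$ is non-degenerate, so $V=P\oplus P^\perp$ and $P^\perp$ is a non-degenerate line, spanned by an anisotropic vector. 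Thus the nonzero vector $w\in P^\perp$ would have $B(w,w)\neq 0$, a contradiction. So there are at most $2$ common neighbours. This is essentially the same mechanism as in Lemma \ref{Lem_OO_n_2}.

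\emph{Case 2: $x'=\lambda x$ with $\lambda\neq 1$.} First, $x\neq 0$: otherwise $a=0$ and $b=B(0,y)=0$, contradicting $ac-b^2\neq 0$. Then $B(x',x')=a$ forces $\lambda^2 a=a$. If $a\neq 0$, this gives $\lambda=-1$. If a common neighbour exists, then $\lambda b=b$, so $b=0$ and in fact $a\neq0$ automatically, again forcing $\lambda=-1$. For each $x$ there is therefore at most one such $x'$, namely $x'=-x$. These at most $|E|$ pairs each have at most $|E|$ common neighbours, contributing at most $|E|^2$ in total.

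I expect the main obstacle to be the degenerate situation in Case 1 where the line lies entirely on the quadric, since that is exactly where a naive Bézout-type count fails. The non-degeneracy of the plane $\spn(x,y)$, which follows from $ac-b^2\neq 0$, is what rules it out, and the argument uses $\operatorname{char}(k)\neq 2$ only through the orthogonal decomposition $V=P\oplus P^\perp$. Combining the two cases gives $\sum_y d(y)^2\leq I+2|E|^2+|E|^2$, and the bound follows with an absolute constant.
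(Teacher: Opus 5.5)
Your proof is correct and follows the paper's route. You use the same incidence count, the same non-degeneracy argument showing that two linearly independent vectors have at most $2$ common neighbours, and a K\H{o}v\'ari--S\'os--Tur\'an bound, which you inline via Cauchy--Schwarz with one-sided control. The one difference is that you absorb the antipodal pairs $(x,-x)$ as an exceptional set of at most $|E|$ pairs, instead of the paper's split into $b\neq 0$ and $b=0$ with the passage to $E_a^+$.

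One small correction to your closing remark: $V=P\oplus P^\perp$ holds for any non-degenerate subspace $P$ in every characteristic. Where $\operatorname{char}(k)\neq 2$ is actually needed is in deducing $B(y_0,w)=0$ from the vanishing of the coefficient $2B(y_0,w)$.
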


The proof of this lemma is inspired by the argument in \cite[Section~5.2]{Do}. Namely, to study the unit-minor problem, Do constructs a $d$-partite hypergraph in which a hyperedge corresponds to a $d$-tuple $(x_1,\dots,x_d)$ of points from a given set with $\det(x_1,\dots,x_d)=1$. She then proves that the hypergraph is $K_{2,\dots,2}$-free and applies a Zarankiewicz-type bound. For our purposes, we need only the classical K\H{o}v\'ari--S\'os--Tur\'an theorem: see \cite[Theorem~1.4.2]{Yufei_Zhao} for a modern statement and proof, or \cite{KST} for the original source. 

\begin{theorem}[K\H{o}v\'ari--S\'os--Tur\'an theorem]
	\label{Thm:KST}	
	Let $s,t\geq 2$. Let $G$ be a graph on at most $N$ vertices.
	Suppose that $G$ contains no $K_{s,t}$-subgraph. Then the number of edges in $G$ is $\ll_{s,t} N^{2-1/s}$. 
\end{theorem}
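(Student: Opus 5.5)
The plan is the classical double-counting argument: count "stars" consisting of a vertex together with an $s$-element subset of its neighbourhood. Let $G$ have vertex set of size at most $N$, let $e$ be its number of edges, and write $\deg(v)$ for the degree of a vertex $v$. Consider the set of pairs $(v,S)$ where $v$ is a vertex and $S$ is an $s$-element subset of the neighbourhood of $v$. Counting these pairs by $v$ gives $\sum_v \binom{\deg(v)}{s}$.

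Counting the same pairs by $S$ instead, I fix an $s$-subset $S$ of vertices and look at its common neighbours. Since $G$ has no loops, no common neighbour of $S$ lies in $S$. So if $S$ had $t$ common neighbours, then $S$ together with these $t$ vertices would span a copy of $K_{s,t}$, which is excluded. Hence each $S$ has at most $t-1$ common neighbours, and therefore
\[
\sum_v \binom{\deg(v)}{s}\le (t-1)\binom{N}{s}\le (t-1)\frac{N^s}{s!}.
\]

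Next I lower-bound the left-hand side by convexity. The function $x\mapsto\binom{x}{s}$, interpreted as $x(x-1)\cdots(x-s+1)/s!$ for $x\ge s-1$ and as $0$ below that, is convex and nondecreasing on $[0,\infty)$. The average degree is $\bar d=2e/N$; this holds if $G$ has exactly $N$ vertices, and otherwise I pad $G$ with isolated vertices. If $\bar d< 2s$, then $e< sN\ll_{s} N^{2-1/s}$ and we are done. Otherwise Jensen's inequality gives
\[
N\binom{\bar d}{s}\le \sum_v\binom{\deg(v)}{s},
\]
and since $\bar d\ge 2s$ we have $\binom{\bar d}{s}\ge (\bar d-s)^s/s!\ge (\bar d/2)^s/s!$. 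Combining this with the upper bound yields $N(\bar d/2)^s\le (t-1)N^s$. Hence $\bar d\le 2(t-1)^{1/s}N^{1-1/s}$, and so $e=N\bar d/2\le (t-1)^{1/s}N^{2-1/s}$.

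There is no real obstacle here; the only points needing care are the two I have already flagged. The first is the convexity/Jensen step, which requires the extension of $\binom{x}{s}$ to real $x$ and the separate treatment of small average degree. The second is the observation that common neighbours are automatically disjoint from $S$, which is what lets the argument work for arbitrary (not necessarily bipartite) graphs.
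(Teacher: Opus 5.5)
Your proof is correct: it is the standard argument that double counts stars $K_{1,s}$ (using that, in a loopless graph, common neighbours of an $s$-set lie outside it) and then applies Jensen's inequality to the convex extension of $\binom{x}{s}$, handling small average degree separately. The paper gives no proof of this theorem and only cites it (e.g.\ Theorem~1.4.2 in Zhao's book); the textbook proof there is essentially the one you wrote.
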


\begin{proof}[Proof of Lemma \ref{lem:binary-Gram}] 
	 We will apply Theorem \ref{Thm:KST} with $s=2$, $t=3$. 

Build a bipartite graph $G$ whose left vertex set is
$E_a=\{x\in E\ |\ B(x,x)=a\}$ and  whose right vertex set is
$E_c=\{y\in E\ |\ B(y,y)=c\}$.	Join $x\in E_a$ to $y\in E_c$ if
$B(x,y)=b$.	The number of edges in $G$ is exactly the number of ordered pairs $(x,y)\in E^2$ in the lemma.
	
{\bf Case 1: $b\neq0$.} We will prove that $G$ is $K_{2,3}$-free.  Then Theorem~\ref{Thm:KST}, with
	$(s,t)=(2,3)$, gives the bound $\ll |E|^{3/2}$ for the number of edges. 
	
Fix two distinct left vertices $m$ and $m_1$. We will bound the number of their common neighbors: points $y\in E_c$ that satisfy the two affine linear equations
\begin{equation}
B(m,y)=b\quad\text{and}\quad B(m_1,y)=b.	
\label{eq:orthogonal_case_n=3_case_b_neq_0_the_2_lin_eqns}
\end{equation}

If $m_1=\lambda m$ for some $\lambda\in k$ (necessarily $\lambda\neq 1$), then a common neighbor $y\in E_c$ would satisfy $b=B(m_1,y)=\lambda B(m,y)=\lambda b$, contradicting $b\neq 0$. Thus $m$ and $m_1$ have no common neighbors in this case. 

So assume that $m$ and $m_1$ are linearly independent. Let $y_0\in V$ be such that $B(m,y_0)=B(m_1,y_0)=b$ (if no such $y_0$ exists, then there are no common neighbors). Then the solution set to \eqref{eq:orthogonal_case_n=3_case_b_neq_0_the_2_lin_eqns} with $y\in V$ is $y_0+\spn(m,m_1)^\perp$. Let $v$ be a basis of $\spn(m,m_1)^\perp$. 

Therefore a point in $E_c$ that satisfies \eqref{eq:orthogonal_case_n=3_case_b_neq_0_the_2_lin_eqns} is described as $y_0+\lambda v$ for some $\lambda\in k$ such that $B(y_0+\lambda v, y_0+\lambda v)=c$, or, equivalently, 
\[B(v,v)\lambda^2+2B(y_0,v)\lambda+B(y_0,y_0)-c=0.\]

If this polynomial in $\lambda$ is not identically $0$, it has at most $2$ roots $\lambda\in k$, and so $m$ and $m_1$ have at most two common neighbors. Otherwise, $B(v,v)=0$, $B(y_0,v)=0$, and $B(y_0,y_0)=c$.

In the latter case, the Gram matrix $\Gr_B(m, y_0)=\begin{pmatrix}
	a & b\\ b & c
\end{pmatrix}$
 is nonsingular, hence $m$ and $y_0$ are linearly independent and the restriction of $B$ to $\spn(m,y_0)$ is non-degenerate. But then so is the restriction of $B$ to $\spn(m,y_0)^\perp$. Since $v\neq 0$ and $v\in\spn(m,y_0)^\perp,$ we have that $v$ is a basis of $\spn(m,y_0)^\perp$. However, $B(v,v)=0$, contradicting the non-degeneracy of $B$ on $\spn(m,y_0)^\perp$.  
 
Therefore $m$ and $m_1$ have at most $2$ common neighbors. Analogously, any two right vertices have at most $2$ common neighbors, so $G$ is $K_{2,3}$-free.

	{\bf Case 2: $b=0$.}	Then $a,c\neq 0$. For $\lambda\in k$ and $m,\lambda m\in E_a$, we have $\lambda^2 a=\lambda^2 B(m,m)=B(\lambda m,\lambda m)=a$, so $\lambda=\pm 1$. 
Notice that if $m,-m\in E_a$, then $m$ and $-m$ have equal degrees in $G$. Let $E_a^+$ be obtained from $E_a$ by removing one element from each pair $\{m,-m\}$ of elements in $E_a$. Define $E_c^+$ similarly. Then $G$ has at most $4$ times as many edges as the induced subgraph $G^+$ of $G$ on the vertex set $E_a^+\cup E_c^+$, and any two vertices in $E_a^+$ or in $E_c^+$ are linearly independent. 

The proof from Case 1 carries over and yields that $G^+$ is $K_{2,3}$-free.  Theorem \ref{Thm:KST} implies that $G^+$ has $\ll |E|^{3/2}$ edges, and the same bound (with a factor of $4$) will also hold for $G$.   
\end{proof}

\begin{proof}[Proof of Proposition \ref{Prop:orthogonal_n_3_nonzero_principal_minor} in the case when some principal $2\times 2$ minor of $S$ is nonzero]
	Let $S=(s_{ij})_{1\leq i,j\leq 3}$.
After relabelling the coordinates, assume that
$
	\det\begin{pmatrix}s_{11}&s_{12}\\ s_{12}&s_{22}\end{pmatrix}\neq 0.
$
	
 By Lemma~\ref{lem:binary-Gram}, the number of ordered pairs
	$(m_1,m_2)\in E^2$ satisfying
	\[
	B(m_1,m_1)=s_{11},\qquad
	B(m_1,m_2)=s_{12},\qquad
	B(m_2,m_2)=s_{22}
	\]
	is $\ll |E|^{3/2}$.	
	For each such pair, we prove that there are at most $2$ points $y\in E$ satisfying
\begin{equation}
	B(m_1,y)=s_{13}, \quad B(m_2,y)=s_{23}, \quad B(y,y)=s_{33}. 
\label{eq:orthogonal_n_3_last_component_system_eqns}	
\end{equation}	 
	
Let $y_0\in V$ be such that $B(m_1,y_0)=s_{13}$, $B(m_2,y_0)=s_{23}$. Then the set of solutions of the first two equations in \eqref{eq:orthogonal_n_3_last_component_system_eqns} is $y_0+\spn(m_1,m_2)^\perp$. Since $\Gr_B(m_1,m_2)=\begin{pmatrix}
	s_{11} & s_{12}\\ s_{21} & s_{22}
\end{pmatrix}$ is nonsingular, $m_1$ and $m_2$ are linearly independent. Then $\spn(m_1,m_2)^\perp$ is one dimensional; let $v$ be a basis. So the solutions of \eqref{eq:orthogonal_n_3_last_component_system_eqns} consist of all $y_0+\lambda v$ with $\lambda\in k$ such that $B(y_0+\lambda v, y_0+\lambda v)=s_{33}$, or, equivalently, 
\begin{equation}
	B(v,v)\lambda^2+2B(y_0,v)\lambda+B(y_0,y_0)-s_{33}=0.
\label{eq:quadratic_lambda_case_orthogonal_n_3_third_component}	
\end{equation} 	
Since $\Gr_B(m_1,m_2)$ is nonsingular, we know that the restriction of $B$ to $\spn(m_1,m_2)$ is non-degenerate. But then so is the restriction of $B$ to $\spn(m_1,m_2)^\perp=\spn(v)$. This implies $B(v,v)\neq 0$. But then \eqref{eq:quadratic_lambda_case_orthogonal_n_3_third_component} is a quadratic equation, hence it has at most $2$ solutions $\lambda\in k$.
\end{proof}

\subsection{Case $n=3$ and each $2\times 2$ principal minor of $S$ is $0$}
\label{Subsec:all_principal_minors_vanish}

We now prove Proposition \ref{Prop:orthogonal_n_3_nonzero_principal_minor} in the case when each principal $2\times 2$ minor of $S$ is $0$. 

Let
\[ S=
\begin{pmatrix}
	a&p&q\\
	p&b&r\\
	q&r&c
\end{pmatrix}.\]	
The condition on the minors implies $ab=p^2$, $ac=q^2$, and $bc=r^2$. Multiply these to obtain $(abc)^2=(pqr)^2$, so $abc=\pm pqr$. But since $\det S=2(pqr-abc)\neq 0$, we must have $pqr=-abc$. Moreover, $a$, $b$, $c$, $p$, $q$, $r$ are all nonzero. 

Define a new bilinear form $B_0:=a^{-1}B$, and let
\[M:=\begin{pmatrix}
	1&1&1\\
	1&1&-1\\
	1&-1&1
\end{pmatrix}.\] 

Let $\lambda_1=1$, $\lambda_2=a/p$, $\lambda_3=a/q$,
and set $A_i=\lambda_i E$. We have a bijection
\[\{(x_1,x_2,x_3)\in E^3:\Gr_B(x_1,x_2,x_3)=S\}\simeq\{(y_1,y_2,y_3)\in A_1\times A_2\times A_3\ |\ \Gr_{B_0}(y_1,y_2,y_3)=M\}\]
given by $y_i=\lambda_i x_i$.

Therefore Proposition \ref{Prop:orthogonal_n_3_nonzero_principal_minor} will follow from the following.

\begin{proposition}
	\label{Prop:bound_Gram_M_bilinear_B}
Let $V$ be a $3$-dimensional vector space over $k$, and let $B$ be a non-degenerate symmetric bilinear form on $V$.	Let $A_1,A_2,A_3$ be finite subsets of $V$. Then
	\begin{equation}
		\#\{(x,y,z)\in A_1\times A_2\times A_3\ |\ \Gr_{B}(x,y,z)=M\}\leq 2|A_1|^{1/2}|A_2|^{1/2}|A_3|^{1/2}.
		\label{eq:count_Gram_M_bilinear_B}
	\end{equation}
\end{proposition}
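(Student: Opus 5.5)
The plan is to combine Cauchy--Schwarz over the first coordinate with a rigidity statement: for any two triples sharing the same $x$, the pair $(y,z')$ formed by the $y$ of one and the $z$ of the other determines $x$ up to two choices.

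\textbf{Setup.} Let $T$ be the set of triples in \eqref{eq:count_Gram_M_bilinear_B}. For $x\in A_1$ put
\[
r(x):=\#\{(y,z)\ |\ (x,y,z)\in T\}.
\]
Then $|T|=\sum_{x\in A_1} r(x)$, and Cauchy--Schwarz gives
\[
|T|^2\le |A_1|\sum_{x} r(x)^2.
\]
The sum $\sum_x r(x)^2$ counts quintuples $(x,y,z,y',z')$ with $(x,y,z)\in T$ and $(x,y',z')\in T$. I would send such a quintuple to $(y,z')\in A_2\times A_3$. If each fiber of this map has at most $2$ elements, then $\sum_x r(x)^2\le 2|A_2||A_3|$, and hence
\[
|T|\le \sqrt2\,|A_1|^{1/2}|A_2|^{1/2}|A_3|^{1/2},
\]
which is stronger than the claim.

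\textbf{Geometry of a triple.} For $(x,y,z)\in T$, write $u=y-x$ and $w=z-x$. The entries of $M$ give
\[
B(x,u)=B(x,w)=0,\qquad B(u,u)=B(w,w)=0,\qquad B(u,w)=-1-1-1+1=-2.
\]
So $u$ and $w$ are nonzero isotropic vectors in $x^\perp$. Since $B(x,x)=1$, the plane $x^\perp$ is non-degenerate. In a non-degenerate plane, two isotropic vectors are orthogonal only if they are proportional. Because $B(u,w)\ne0$, the vectors $u$ and $w$ lie on the two distinct isotropic lines of $x^\perp$.

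\textbf{The key step.} Take triples $(x,y,z)$ and $(x,y',z')$ in $T$. I claim that $x$, $y$, $z'$ form a basis and that $x$ is determined by $(y,z')$ up to two choices.

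First, $y-x$ and $z'-x$ are nonzero isotropic vectors in $x^\perp$. The vector $z'-x$ is proportional to $z-x$: the second triple shows that $z'-x$ is isotropic and not proportional to $y'-x$, and $x^\perp$ has only two isotropic lines. Hence $y-x$ and $z'-x$ lie on the two different isotropic lines, and so $B(y-x,z'-x)\neq0$.

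Put $s=B(y,z')$. Then $B(y-x,z'-x)=s-1$, so $s\neq1$. The Gram matrix of $(x,y,z')$ is
\[
\begin{pmatrix}1&1&1\\1&1&s\\1&s&1\end{pmatrix},
\]
whose determinant is $-(1-s)^2\neq0$. Therefore $x,y,z'$ form a basis, and in particular $y,z'$ are independent.

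The linear conditions $B(x,y)=B(x,z')=1$ cut out an affine line $x_0+k v$, where $v$ spans $\spn(y,z')^\perp$. On this line the condition $B(x,x)=1$ becomes
\[
B(v,v)\lambda^2+2B(x_0,v)\lambda+B(x_0,x_0)-1=0,
\]
a polynomial in $\lambda$ with at most $2$ roots unless it vanishes identically.

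\textbf{Main obstacle: excluding the degenerate case.} This is the same issue as in Lemma~\ref{Lem_OO_n_2} and Lemma~\ref{lem:binary-Gram}. Suppose the polynomial vanishes identically. Then $B(v,v)=B(x_0,v)=0$ and $B(x_0,x_0)=1$. Now $x_0$ itself satisfies all three conditions, and the Gram-matrix computation above applies to any point on the line satisfying them. So $x_0,y,z'$ form a basis of $V$. The nonzero vector $v$ is orthogonal to all three, so $v\in V^\perp$, contradicting the non-degeneracy of $B$.

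Hence each fiber has at most $2$ elements. Inserting this into the Cauchy--Schwarz inequality gives \eqref{eq:count_Gram_M_bilinear_B}, with room to spare.
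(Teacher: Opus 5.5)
\textbf{The gap.} The false step is the claim that $z'-x$ is proportional to $z-x$. You only know that $\{k(y'-x),k(z'-x)\}$ and $\{k(y-x),k(z-x)\}$ are both the pair of isotropic lines of $x^\perp$. Nothing puts $z'-x$ on the line of $z-x$ rather than on the line of $y-x$. Indeed, $M$ is invariant under swapping the second and third indices. So if $(x,y,z)\in T$ with $z\in A_2$ and $y\in A_3$, then $(x,z,y)\in T$ as well, and for that pair $z'-x=y-x$.

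In this swapped case $B(y-x,z'-x)=0$, so $s=B(y,z')=1$ and the Gram matrix of $(x,y,z')$ is the all-ones matrix. Then $(y,z')$ no longer pins down $x$: $y$ and $z'$ lie on a common line of the quadric $\{Q=1\}$, and every point of that line is a candidate. Concretely, in $(W,\langle,\rangle)$ with $Q=X^2+2YZ$, put $x_t=(1,t,0)$ and, for $t\neq 1$,
\[
z_t=\bigl(-\tfrac{t+1}{t-1},-\tfrac{t}{t-1},\tfrac{2}{t-1}\bigr).
\]
Then both $(x_t,x_1,z_t)$ and $(x_t,z_t,x_1)$ have Gram matrix $M$. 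Take a finite $S\subset k\setminus\{1\}$, $A_1=\{x_t: t\in S\}$ and $A_2=A_3=\{x_1\}\cup\{z_t: t\in S\}$. All $|S|$ quintuples $(x_t,x_1,z_t,z_t,x_1)$ lie in the fiber over $(y,z')=(x_1,x_1)$. So the claim that each fiber has at most $2$ elements is false, and $\sum_x r(x)^2\le 2|A_2||A_3|$ is not established.

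A smaller omission: even when $x$ is determined, you must also say why $z$ and $y'$ are. The reason is that $z-x$ is the unique vector on the other isotropic line of $x^\perp$ with $B(y-x,z-x)=-2$, and similarly for $y'$.

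\textbf{The repair.} Split the quintuples into two cases, which together exhaust all possibilities:
\begin{itemize}
\item \emph{same type}: $z'-x\in k(z-x)$, equivalently $y'-x\in k(y-x)$;
\item \emph{swapped type}: $z'-x\in k(y-x)$.
\end{itemize}
For same type your argument works and gives at most $2|A_2||A_3|$ quintuples. For swapped type, $y-x$ and $y'-x$ lie on different isotropic lines, so $B(y,y')\neq 1$. The identical argument applied to $(y,y')\in A_2^2$, or to $(z,z')\in A_3^2$, bounds these quintuples by $2\min(|A_2|,|A_3|)^2\le 2|A_2||A_3|$. Hence $\sum_x r(x)^2\le 4|A_2||A_3|$ and
\[
|T|\le 2|A_1|^{1/2}|A_2|^{1/2}|A_3|^{1/2}.
\]
This is exactly the stated constant, not $\sqrt2$ with room to spare.

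\textbf{Comparison with the paper.} The same/swapped dichotomy is precisely the paper's case split ``$r=r'$ and $s=s''$, or $s=s'$ and $r=r''$.'' The paper transports the problem to $(W,\langle,\rangle)$ and parametrizes $\{Q=1\}$ by an open subset of $\P^1\times\P^1$ via the Segre embedding. It then identifies triples of one type with a Loomis--Whitney configuration and applies the finite Loomis--Whitney inequality, with constant $1$, to each type. Your Cauchy--Schwarz step is essentially the proof of that inequality. The repaired version avoids both the explicit parametrization and the reduction to the model space, at the cost of the quadratic-fiber bookkeeping.
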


We begin with some reductions and preparations. 

We can assume that there exists a triple $(x_1,x_2,x_3)\in A_1\times A_2\times A_3$ with $\Gr_B(x_1,x_2,x_3)=M$. 

Consider the space $W=k^3$, equipped with the bilinear form
\[\langle(X,Y,Z),(X',Y',Z')\rangle=XX'+YZ'+ZY'.\]
The corresponding quadratic form is
\[Q(X,Y,Z)=X^2+2YZ.\]
Consider the following basis of $W$:
$w_1=(1,0,0)$, $w_2=(1,1,0)$, $w_3=(1,0,-2)$.
Since $\Gr_{\langle ,\rangle}(w_1,w_2,w_3)=M$,
the unique linear map $T:W\to V$ sending $w_i$ to $x_i$ is an isometry. 
Let $B_i=T^{-1}A_i$.
The set described in the LHS of (\ref{eq:count_Gram_M_bilinear_B}) is in bijection with 
\[\{(y_1,y_2,y_3)\in B_1\times B_2\times B_3\ |\ \Gr_{\langle , \rangle}(y_1,y_2,y_3)=M\}.\]
Therefore, it suffices to prove Proposition \ref{Prop:bound_Gram_M_bilinear_B} for $(W,\langle,\rangle)$. To say that $(x,y,z)\in A_1\times A_2\times A_3$ satisfies	$\Gr_{\langle,\rangle}(x,y,z)=M$ is to say that 
\begin{equation}
	\label{eq:interpret_rulings}
	\begin{gathered}
		Q(x)=Q(y)=Q(z)=1,\\
		\langle x,y\rangle=1,\qquad
		\langle x,z\rangle=1,\qquad
		\langle y,z\rangle=-1.
	\end{gathered}
\end{equation}

We now parametrize the quadric surface $\{Q=1\}$. The Segre embedding $\P^1\times\P^1\to\P^3_{[A:B:C:D]}$, $([r_0:r_1],[s_0:s_1])\mapsto [r_0s_0:r_0s_1:r_1s_0:r_1s_1]$ identifies in turn
\begin{equation}
	\P^1\times \P^1\xrightarrow{\simeq} \{AD=BC\}\simeq
\{[X:Y:Z:T]\in\P^3\ |\ X^2+2YZ=T^2\},
\label{eq:Segre}
\end{equation}
where we used the change of variables $T=A+D$, $X=A-D$, $Y=B$, $Z=2C$. 
On the affine chart $\{T\neq 0\}$, the RHS in \eqref{eq:Segre} becomes the affine surface $\{Q=1\}$.

For $r=[r_0:r_1]\in \P^1(k)$ and $s=[s_0:s_1]\in\P^1(k)$, define
\[
\Delta(r,s):=r_0s_0+r_1s_1
\]
and
\[
[r,s]:=r_0s_1-r_1s_0.
\]
Note that $[r,s]=0$ if and only if $r=s$. 

Let
\[\Omega:=\{(r,s)\in \P^1(k)\times \P^1(k):\Delta(r,s)\neq 0\}.\]

Since $\Delta(r,s)=A+D=T$, the set $\Omega$ is the preimage under the composite map in \eqref{eq:Segre} of
the affine chart $\{T\neq 0\}$. Therefore \eqref{eq:Segre} induces a bijection
\begin{align*}
	\Phi:	\Omega &\longrightarrow \{v\in W\ |\ Q(v)=1\}\\
	(r,s) &\longmapsto \left(
	\frac{r_0s_0-r_1s_1}{\Delta(r,s)},
	\frac{r_0s_1}{\Delta(r,s)},
	\frac{2r_1s_0}{\Delta(r,s)}
	\right).
\end{align*}
Moreover, a direct computation shows that for $(r,s), (u,t)\in\Omega$, we have
\begin{equation}
	\langle\Phi(r,s),\Phi(u,t)\rangle=1-\frac{2[r,u][s,t]}{\Delta(r,s)\Delta(u,t)}.
	\label{eq:dot_product_Phis}
\end{equation} 
Consequently, 
\begin{equation}
	\langle\Phi(r,s),\Phi(u,t)\rangle=1\quad\text{if and only if}\quad
	r=u\ \text{or}\ s=t.
	\label{eq:when_dot_prod_equals_1}
\end{equation} 	

\begin{proof}[Proof of Proposition \ref{Prop:bound_Gram_M_bilinear_B} for $(V,B)=(W,\langle,\rangle)$] We study \eqref{eq:interpret_rulings} using the parametrization of $\{Q=1\}$.  
Write $x=\Phi(r,s)$, $y=\Phi(r',s')$, $z=\Phi(r'',s'')$, with
	$(r,s), (r',s'), (r'',s'')\in\Omega$. Then (\ref{eq:when_dot_prod_equals_1}) applied to $\langle x,y\rangle=1$,
	$\langle x,z\rangle=1$ implies that 
	\[r=r'\quad\text{or}\quad s=s'\]
	and
	\[r=r''\quad\text{or}\quad s=s''.\]
	Note that $x\neq y$, since $\langle x,z\rangle\neq\langle y,z\rangle$. Similarly, $x\neq z$. Thus, ``or" is in fact an ``xor" in both displayed lines above.  	
	
	Next, note that $y$ and $z$ cannot have equal first or second components because otherwise (\ref{eq:when_dot_prod_equals_1}) would imply $\langle y,z\rangle=1$, contradicting $\langle y,z\rangle=-1$. 
	
	We conclude that either $r=r'$ and $s=s''$, or $s=s'$ and $r=r''$. We treat the former case (the factor of $2$ on the RHS in (\ref{eq:count_Gram_M_bilinear_B}) accounts for allowing each of these two cases).  
	
	So
	\[x=\Phi(r,s), y=\Phi(r,t), z=\Phi(u,s) \quad\text{with}\quad (r,s),(r,t),(u,s)\in\Omega.\]
	
	By (\ref{eq:dot_product_Phis}), the condition $\langle y,z \rangle=-1$ is equivalent to 
	\[[r,u][t,s]=\Delta(r,t)\Delta(u,s).\]
The difference $[r,u][t,s]-\Delta(r,t)\Delta(u,s)$ can be factored as
	\[[r,u][t,s]-\Delta(r,t)\Delta(u,s)=-\Delta(r,s)\Delta(u,t).\]
Note that $\Delta(r,s)\neq 0$, since $(r,s)\in\Omega$. Therefore
	$\langle y,z\rangle=-1$ is equivalent to 
	$\Delta(u,t)=0$.
	
	For every $u=[u_0:u_1]\in\P^1(k)$, note that $\varphi(u):=[-u_1:u_0]$ is the unique $t\in\P^1(k)$ such that $\Delta(u,t)=0$. 
	
	So, we are reduced to counting triples $(r,s,u)\in\P^1(k)\times \P^1(k)\times \P^1(k)$ such that $(r,s), (r,\varphi(u)), (u,s)\in\Omega$ and 
	\[\Phi(r,s)\in A_1,\quad\Phi(r,\varphi(u))\in A_2,\quad \Phi(u,s)\in A_3.\]
	In other words, we are counting triples $(r,s,u)\in \P^1(k)\times \P^1(k)\times \P^1(k)$ which satisfy the following:
	\begin{itemize}
		\item $(r,s)\in B_1:=\Phi^{-1}(A_1)$;
		\item $(r,u)\in B_2:=\{(r,u)\in\P^1(k)\times\P^1(k)\ |\ (r,\varphi(u))\in\Omega\ \text{and}\ \Phi(r,\varphi(u))\in A_2\}$;
		\item $(s,u)\in B_3:=\{(s,u)\in\P^1(k)\times\P^1(k)\ |\ (u,s)\in\Omega\ \text{and}\ \Phi(u,s)\in A_3\}$.  
	\end{itemize}
	Note that each $B_i$ injects in $A_i$, so $|B_i|\leq |A_i|$. Therefore (\ref{eq:count_Gram_M_bilinear_B}) follows from Lemma \ref{Lem:finite_Loomis_Whitney} below.
\end{proof}	

The lemma below is a finite version of the Loomis--Whitney inequality \cite{LoomisWhitney1949} in dimension~$3$ and is a particular case of 
\cite[Theorem~2.1]{Finner1992}. 

\begin{lemma}
	\label{Lem:finite_Loomis_Whitney}	
	Let $R$, $S$, $U$ be arbitrary sets, and let $B_1\subset R\times S$, $B_2\subset R\times U$, and $B_3\subset S\times U$ be finite subsets. Then
	\begin{equation}
		\#\{(r,s,u)\in R\times S\times U\ |\ (r,s)\in B_1, (r,u)\in B_2, (s,u)\in B_3\}\leq |B_1|^{1/2}|B_2|^{1/2}|B_3|^{1/2}.
		\label{eq:Loomis_Whitney_3d}	
	\end{equation}	
\end{lemma}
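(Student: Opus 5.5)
The plan is to prove the 3-dimensional Loomis--Whitney inequality in Lemma \ref{Lem:finite_Loomis_Whitney} by slicing along the common coordinate $r$ and applying Cauchy--Schwarz twice. For $r\in R$, introduce the fibers
\[
S_r:=\{s\in S\ |\ (r,s)\in B_1\},\qquad U_r:=\{u\in U\ |\ (r,u)\in B_2\},
\]
and write $a_r:=|S_r|$, $b_r:=|U_r|$. Then $\sum_r a_r=|B_1|$ and $\sum_r b_r=|B_2|$, and only finitely many $r$ have $a_r b_r\neq 0$. The left-hand side of \eqref{eq:Loomis_Whitney_3d} equals
\[
\sum_{r\in R} c_r,\qquad c_r:=|B_3\cap(S_r\times U_r)|.
\]

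For each fixed $r$, I bound $c_r$ in two ways. Trivially, $c_r\leq |S_r\times U_r|=a_rb_r$. Also, $c_r\leq |B_3|$, since the pairs being counted are distinct elements of $B_3$. Combining these gives
\[
c_r\leq \min\{a_rb_r,|B_3|\}\leq (a_rb_r)^{1/2}|B_3|^{1/2}.
\]

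Summing over $r$ and applying Cauchy--Schwarz to $\sum_r a_r^{1/2}b_r^{1/2}$ yields
\[
\sum_r c_r\leq |B_3|^{1/2}\Big(\sum_r a_r\Big)^{1/2}\Big(\sum_r b_r\Big)^{1/2}=|B_1|^{1/2}|B_2|^{1/2}|B_3|^{1/2},
\]
which is \eqref{eq:Loomis_Whitney_3d}.

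The only step that needs any idea is the pointwise bound $\min\{X,Y\}\leq\sqrt{XY}$. It is what balances the trivial fiber count against the global size of $B_3$. After that, everything is routine. Finiteness of the sums is automatic because $B_1$ and $B_2$ are finite, so nothing else requires checking.
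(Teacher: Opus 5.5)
Your proof is correct, but it takes a different route from the paper's.

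You slice along the shared coordinate $r$. You bound each planar slice count $c_r=|B_3\cap(S_r\times U_r)|$ by $\min\{a_rb_r,|B_3|\}\le (a_rb_r)^{1/2}|B_3|^{1/2}$, and finish with a single Cauchy--Schwarz over $r$. This is the $d=3$ case of the classical inductive slicing proof of Loomis--Whitney.

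The paper instead fibers over the pairs $(r,s)\in B_1$. Let $N(r,s)$ be the number of admissible $u$ and $T$ the set of triples counted on the left-hand side. Cauchy--Schwarz gives $|T|^2\le |B_1|\sum_{(r,s)\in B_1}N(r,s)^2$. The second moment $\sum N(r,s)^2$ counts quadruples $(r,u,s,u')$ with $(r,s,u),(r,s,u')\in T$, and these inject into $B_2\times B_3$ via $(r,u,s,u')\mapsto((r,u),(s,u'))$.

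Both arguments amount to two Cauchy--Schwarz steps taken in different orders. In the indicator setting, your $\min\{X,Y\}\le\sqrt{XY}$ is exactly Cauchy--Schwarz on $S\times U$ for $1_{S_r\times U_r}\cdot 1_{B_3}$.

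Your version makes the balance between the trivial slice bound and the global bound $|B_3|$ explicit, and it is the form that generalizes by induction with H\"older to higher dimensions. The paper's version replaces that balancing with a second-moment double count, so no interpolation step is needed.

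The two proofs handle possibly infinite $R,S,U$ equally easily. You do so by noting that only finitely many slices are nonzero; the paper first restricts to the finitely many coordinates that actually occur.
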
	

\begin{proof}
Only finitely many elements of $R$, $S$, and $U$ occur as coordinates of points in $B_1$, $B_2$, and $B_3$, so we may assume without loss of generality that $R$, $S$, and $U$ are finite. The result follows from 	
\cite[Theorem~2.1]{Finner1992} applied to these finite sets, equipped with counting measure, and to the indicator functions of $B_1$, $B_2$, and $B_3$.  We give a short independent proof for completeness.	
	
Let $T$ denote the set whose cardinality appears on the LHS of (\ref{eq:Loomis_Whitney_3d}). For each $(r,s)\in B_1$, let
	\[N(r,s)=\#\{u\in U\ |\ (r,s,u)\in T\}.\]
	Then $|T|=\sum_{(r,s)\in B_1}N(r,s)$ and the Cauchy--Schwarz inequality implies
	\[|T|^2\leq |B_1|\sum_{(r,s)\in B_1}N(r,s)^2.\] 	
	Thus it suffices to establish the inequality
	\begin{equation}
		\sum_{(r,s)\in B_1}N(r,s)^2\leq |B_2||B_3|.
		\label{eq:Loomis_Whitney_reduces_to_this}	
	\end{equation}
	The LHS of (\ref{eq:Loomis_Whitney_reduces_to_this}) is the cardinality of the set
	\[D:=\{(r,u,s,u')\in R\times U\times S\times U\ |\ (r,s,u)\in T, (r,s,u')\in T\}.\]
	But the map $D\to B_2\times B_3$, $(r,u,s,u')\mapsto ((r,u), (s,u'))$ is an injection. Therefore $|D|\leq |B_2||B_3|$, establishing (\ref{eq:Loomis_Whitney_reduces_to_this}).  
\end{proof}	

\begin{proof}[Proof of Proposition \ref{Prop:orthogonal_n_3_nonzero_principal_minor}]
The case when  some principal $2\times 2$ minor of $S$ is nonzero was proved in Section \ref{Subsec:Orth_n_3}. The case when all principal $2\times 2$ minors vanish was proved above. 
\end{proof}

\begin{proof}[Proof of Theorem \ref{Thm_orthogonal_cases_n_2_3}]
The case $n=2$ follows from Lemma \ref{Lem_OO_n_2}, and the case $n=3$ follows from Proposition \ref{Prop:orthogonal_n_3_nonzero_principal_minor}. 	
\end{proof}

\section{Orthogonal groups via Brascamp--Lieb}
\label{Sec:Brascamp_Lieb}

\subsection{Verifying the Brascamp--Lieb condition for an orthogonal group}

We establish the following

\begin{proposition}
	\label{Prop:OO_satisfies_Brascamp_Lieb}	
	Let $k$ be an algebraically closed field with $\operatorname{char}(k)\neq 2$, and let $V=k^n$. Let $B$ be a non-degenerate symmetric bilinear form on $V$. Let $u_1,\dots,u_n\in V$ form a basis of $V$ as a vector space over $k$. Consider $\pi_j: \OO(V,B)\to \A^n$, $g\mapsto gu_j$ for $j=1,\dots,n$. Let $Z$ be an irreducible subvariety of $\OO(V,B)$. Then 
	\[\dim Z\leq\frac{1}{2}\sum_{j=1}^n \dim \overline{\pi_j(Z)}.\] 
\end{proposition}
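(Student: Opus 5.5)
The plan is to bound $\dim Z$ by a telescoping chain of partial projections and then optimize the order of the chain. Fix an ordering $\sigma$ of $\{1,\dots,n\}$. For $0\le j\le n$ let $Z_j^\sigma\subset\A^{nj}$ be the closure of the image of $Z$ under $g\mapsto(gu_{\sigma(1)},\dots,gu_{\sigma(j)})$. Since $g$ is determined by $gu_1,\dots,gu_n$, we have $\dim Z=\dim Z_n^\sigma$. Set $f_j:=\dim Z_j^\sigma-\dim Z_{j-1}^\sigma$, which is the generic fiber dimension of the projection $Z_j^\sigma\to Z_{j-1}^\sigma$. Then $\dim Z=\sum_j f_j$. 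Write $d_i:=\dim\overline{\pi_i(Z)}$.

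For each $f_j$ I will prove two upper bounds.
\begin{enumerate}
\item $f_j\le d_{\sigma(j)}$. The fiber over a point of $Z_{j-1}^\sigma$ embeds, via its last coordinate, into $\overline{\pi_{\sigma(j)}(Z)}$.
\item $f_j\le n-j$. Over a point $(x_1,\dots,x_{j-1})=(gu_{\sigma(1)},\dots,gu_{\sigma(j-1)})$, the possible values $x=gu_{\sigma(j)}$ satisfy the $j-1$ linear equations $B(x_i,x)=B(u_{\sigma(i)},u_{\sigma(j)})$, where the $x_i$ are linearly independent. They also satisfy the quadratic equation $B(x,x)=B(u_{\sigma(j)},u_{\sigma(j)})$.
\end{enumerate}
In the second bound, the linear equations cut out an affine space $x_0+U$ with $U=\spn(x_1,\dots,x_{j-1})^\perp$ of dimension $n-j+1$. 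As in the proofs of Lemma \ref{Lem_OO_n_2} and Proposition \ref{Prop:orthogonal_n_3_nonzero_principal_minor}, I will show that the quadratic equation restricted to $x_0+U$ is not identically zero. If it were, then $B|_U\equiv 0$ and $B(x_0,U)=0$. Since $x_1,\dots,x_{j-1},x_0$ can be taken to be $g$ applied to independent vectors, this would force $U\subset \spn(x_1,\dots,x_{j-1},x_0)^\perp\cap U$ to be totally isotropic of dimension $n-j+1$. That contradicts non-degeneracy of $B$, at least after the careful bookkeeping done in the $n=2,3$ cases; for $j=n$ it is simply finiteness. So the fiber lies in a hypersurface of $x_0+U$, which gives $f_j\le n-j$.

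Hence for every ordering $\sigma$,
\[
\dim Z\le \sum_{j=1}^n \min\bigl(d_{\sigma(j)},\,n-j\bigr).
\]
The natural choice is to order increasingly, $d_{\sigma(1)}\le\dots\le d_{\sigma(n)}$. This puts the small projections early, where the cap $n-j$ is large, and the large ones late, where the cap is small. When all $d_i=n-1$ the bound gives $\sum_j(n-j)=n(n-1)/2=\tfrac12\sum d_i$, which is exactly the generic case.

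The remaining step is the combinatorial inequality
\[
\sum_{j=1}^n\min(d_j,n-j)\le\tfrac12\sum_{j=1}^n d_j
\]
for $0\le d_1\le\dots\le d_n\le n-1$. I expect this to be the main obstacle. A naive pairing of $j$ with $n+1-j$ fails locally, although small cases check out globally; for example $n=3$ with $d=(1,1,2)$ gives $2\le 2$. My first attempt will be to prove it by induction on $n$, removing $d_n$, which contributes $0$ on the left, and comparing the two sides. If the inequality turns out to be false for some increasing sequences, I will feed in extra structural constraints. One source is the chain for other orderings, for instance the reversed one, which yields $\dim Z\le\sum_j\min(d_j,j-1)$. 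Another is the relation that a value $d_i$ small forces the later fibers to shrink: fixing $gu_i$ cuts the remaining group to a conjugate of $\OO(u_i^\perp)$, so one can induct on $n$ via the stabilizer. The fallback plan is this induction on $n$: choose $i$ with $d_i$ minimal, write $Z$ as fibered over $\overline{\pi_i(Z)}$ with fibers inside cosets of $\OO(u_i^{\perp})$ (after reducing to the case $B(u_i,u_i)\ne0$), and apply the $(n-1)$-dimensional statement to a generic fiber together with the projections onto the orthogonal complement of $u_i$.
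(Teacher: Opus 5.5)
Your chain bound is valid, but it is too weak, and the combinatorial inequality you need is false. Both increments bounds hold: $f_j\le d_{\sigma(j)}$, and also $f_j\le n-j$. For the latter the contradiction is immediate: $B(x_0,U)=0$ with $x_0=gu_{\sigma(j)}$ forces $x_0\in U^\perp=\spn(x_1,\dots,x_{j-1})$.

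Now take $n=3$ and $d=(1,1,1)$. Then $\sum_j\min(d_j,3-j)=1+1+0=2$, while $\tfrac12\sum_j d_j=\tfrac32$. Because the $d_i$ are all equal, every ordering $\sigma$ (including the reversed one) gives the same value $2$. So the chain can never rule out a surface $Z\subset\OO(V,B)$, $n=3$, all of whose projections are curves, which is exactly what the proposition must exclude. More generally, for constant $d_i=d$ with $0<d<n-1$, the chain gives $nd-d(d+1)/2$, which exceeds $nd/2$ by $d(n-d-1)/2$.

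Your fallback induction suffers the same loss. Fibering over $Y_i$ and applying the $(n-1)$-dimensional statement to a generic fiber $F$ only yields $\dim Z\le d_i+\tfrac12\sum_{j\ne i}\dim\overline{\pi_j(F)}\le d_i+\tfrac12\sum_{j\ne i}d_j$, which for $(1,1,1)$ is again $2$. To close it you would need a genuinely new argument: as $x$ varies, the images $\overline{\pi_j(F_x)}$ move inside $Y_j$, since they lie on the affine quadrics $\{y:\ B(x,y)=B(u_i,u_j),\ B(y,y)=B(u_j,u_j)\}$, and this movement must be exploited. You would also have to treat isotropic $u_i$. You cannot ``reduce'' to $B(u_i,u_i)\neq 0$, because the basis is fixed and may consist entirely of isotropic vectors.

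The missing idea is linearization. The paper chooses $g\in Z$ with every $\pi_i(g)$ a smooth point of $Y_i=\overline{\pi_i(Z)}$. Then $\dim Z\le\dim T_gZ$, and $d\pi_i|_g(T_gZ)\subset T_{\pi_i(g)}Y_i$ has dimension at most $\dim Y_i$. After translating to the identity and identifying $\mathfrak{o}(V,B)\cong\bigwedge^2V^*$, the differential $d\pi_i$ becomes contraction with $u_i$. In coordinates this is the projection $k^\Omega\to k^{A_i}$, where $\Omega$ is the set of $2$-subsets of $\{1,\dots,n\}$ and $A_i$ consists of those containing $i$. The $A_i$ form an exact $2$-cover of $\Omega$, so the subspace Shearer/Loomis--Whitney inequality (Lemma~\ref{Lem:lin_alg_projections_inequality}) gives $2\dim L\le\sum_i\dim\operatorname{pr}_i(L)$ for $L=T_gZ$.

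The factor $\tfrac12$ arises because each Lie-algebra coordinate $\{i,j\}$ is seen by exactly two of the projections. Your chain uses the vectors $gu_j$ as its atoms rather than the pairs $\{i,j\}$, so it cannot detect this double counting.
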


We begin by stating the linear algebra result that Proposition \ref{Prop:OO_satisfies_Brascamp_Lieb} will be reduced to.

\begin{lemma}[Lemma 4 in \cite{Pendavingh}]
	\label{Lem:lin_alg_projections_inequality}
	Let $k$ be a field. Let $\Omega$ be a finite set, and let $A_1,\dots,A_m$ be subsets of $\Omega$. For each $i$, consider the linear projection map $\operatorname{pr}_i:k^\Omega\to k^{A_i}$, $(c_x)_{x\in\Omega}\mapsto (c_x)_{x\in A_i}$. Suppose $r$ is such that
	$A_1,\dots,A_m$ form an exact $r$-cover of $\Omega$ --- that is, for each $x\in \Omega$, we have $\#\{i\in\{1,\dots,m\}\ |\ x\in A_i\}=r$. Let $L\subset k^\Omega$ be a vector subspace. Then
	\[r\dim L\leq\sum_{i=1}^m\dim\operatorname{pr}_i(L).\]
\end{lemma}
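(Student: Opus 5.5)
The plan is to treat the statement as a Shearer-type inequality for a submodular set function. For $A\subset\Omega$, set $f(A):=\dim\operatorname{pr}_A(L)$, where $\operatorname{pr}_A:k^\Omega\to k^A$ is the coordinate projection. Then $f(\emptyset)=0$, and $f(\Omega)=\dim L$ because $\operatorname{pr}_\Omega$ is the identity. So the claim becomes $\sum_{i=1}^m f(A_i)\geq r f(\Omega)$, and I would prove it for any monotone submodular $f$ with $f(\emptyset)=0$.

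\emph{Step 1: $f$ is a matroid rank function.} For $x\in\Omega$, let $\ell_x\in L^*$ be the restriction to $L$ of the coordinate functional $c\mapsto c_x$. The kernel of $\operatorname{pr}_A|_L$ is the common kernel of the $\ell_x$ with $x\in A$. Hence $f(A)=\dim L-\dim\ker(\operatorname{pr}_A|_L)=\dim\spn\{\ell_x\mid x\in A\}$. The rank of a family of vectors is monotone and submodular, since $\dim(U_1+U_2)+\dim(U_1\cap U_2)=\dim U_1+\dim U_2$ and $\spn(A\cap A')\subset\spn(A)\cap\spn(A')$. In particular, marginal gains decrease: for $S\subset T$ and $x\notin T$, we have $f(S\cup\{x\})-f(S)\geq f(T\cup\{x\})-f(T)$.

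\emph{Step 2: the chain argument.} Enumerate $\Omega=\{x_1,\dots,x_N\}$ and put $P_j=\{x_1,\dots,x_j\}$. Telescoping gives $f(\Omega)=\sum_{j}\bigl(f(P_j)-f(P_{j-1})\bigr)$. For each $i$, telescope $f(A_i)$ along the chain $A_i\cap P_j$:
\[
f(A_i)=\sum_{j:\,x_j\in A_i}\bigl(f((A_i\cap P_{j-1})\cup\{x_j\})-f(A_i\cap P_{j-1})\bigr)\geq\sum_{j:\,x_j\in A_i}\bigl(f(P_j)-f(P_{j-1})\bigr).
\]
The inequality is the decreasing-marginals property from Step 1, applied with $S=A_i\cap P_{j-1}\subset T=P_{j-1}$. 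Summing over $i$, each index $j$ occurs exactly $r$ times by the exact $r$-cover hypothesis. Every increment is nonnegative by monotonicity, although exactness already makes the count equal to $r$. This yields $\sum_i f(A_i)\geq r f(\Omega)=r\dim L$.

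The only substantive point is Step 1, namely identifying $\dim\operatorname{pr}_A(L)$ with the rank of the restricted coordinate functionals so that submodularity applies. Step 2 is then routine bookkeeping.
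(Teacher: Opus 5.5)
Your proof is correct. Writing $\dim\operatorname{pr}_A(L)$ as the rank of the restricted coordinate functionals $\{\ell_x\}_{x\in A}$ in $L^*$ makes $A\mapsto\dim\operatorname{pr}_A(L)$ a monotone submodular function (the rank function of the matroid represented by $L$). Your telescoping along the chain $A_i\cap P_j$ is then the standard Shearer-type argument, and each step checks out. The decreasing-marginals inequality follows from submodularity applied to $S\cup\{x\}$ and $T$, and the exact $r$-cover makes each increment $f(P_j)-f(P_{j-1})$ appear exactly $r$ times.

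There is no in-paper argument to compare with. The paper does not prove this lemma; it imports it as Lemma~4 of \cite{Pendavingh} and applies it only to the exact $2$-cover of the $2$-subsets of $\{1,\dots,n\}$ by the stars $A_i$. Your write-up therefore supplies a self-contained justification valid over every field. Since all increments are nonnegative, it also gives the inequality when each $x$ merely lies in at least $r$ of the sets $A_i$, so exactness of the cover is not needed.
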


\begin{proof}[Proof of 
	Proposition \ref{Prop:OO_satisfies_Brascamp_Lieb}.]
	We will study the differentials of the $\pi_i$ using the following ingredients.  
	
	The Lie algebra of $\OO(V,B)$ can be described as 
	\[\mathfrak{o}(V,B)=\{X\in \End(V)\ |\ B(Xu,v)+B(u,Xv)=0\ \text{for all $u,v\in V$}\}.\]
	
	We will use the usual identification $\bigwedge^2 V^*\simeq\Alt^2(V,k)$ induced from the map $(V^*)^2\to \Alt^2(V,k)$, $(\lambda,\mu)\mapsto \lambda\wedge\mu$, where, for $(u,v)\in V^2$, $(\lambda\wedge\mu)(u,v)=\det\begin{pmatrix}
		\lambda(u) & \lambda(v)\\
		\mu(u) & \mu(v)
	\end{pmatrix}$, as well as the standard identification $\mathfrak{o}(V,B)\simeq \bigwedge^2 V^*$.
	
	An element $x\in V$ induces a contraction-by-$x$ map 
	\[C_x: \bigwedge^2 V^*\longrightarrow V^*,\qquad \alpha\longmapsto \alpha(x,\cdot);\]
	explicitly, for $\lambda,\mu\in V^*$, we have
	$C_x(\lambda\wedge\mu)=\lambda(x)\mu-\mu(x)\lambda$. Notice that $\operatorname{im}(C_x)\subset\operatorname{Ann}(x)$, where $\operatorname{Ann}(x):=\{\lambda\in V^* \ |\  \lambda(x)=0\}$.

	Let $\Omega$ be the set of $2$-element subsets of $\{1,\dots,n\}$. For each $i=1,\dots,n$, let $A_i\subset \Omega$ be the set of $2$-element subsets of $\{1,\dots,n\}$ that contain $i$. Then $A_1,\dots,A_n$ form an exact $2$-cover of $\Omega$. 
	
	For any $g\in \OO(V,B)(k)$, the commutative diagram below identifies the differential maps $d\pi_i|_g$ first with the contraction maps $C_{u_i}$ and then in turn with the projection maps $\operatorname{pr}_i$ described in Lemma \ref{Lem:lin_alg_projections_inequality}. 
	
	\[
	\begin{tikzcd}[column sep=huge,row sep=huge]
		T_g\OO(V,B)
		\arrow[rr, "{d\pi_i|_g}"]
		\arrow[d, "\simeq"', "{dL_{g^{-1}}|_g}"]
		&& T_{g u_i}\A(V)
		\arrow[rr, "\simeq", "{\mathrm{can}}"']
		\arrow[d, "\simeq"', "{d(g^{-1})|_{g u_i}}"]
		&& V
		\arrow[d, "\simeq"', "{g^{-1}}"]
		\\
		\mathfrak{o}(V,B)
		\arrow[rr, "{d\pi_i|_e}"]
		\arrow[rrrr, bend right=14, "{X\mapsto Xu_i}"']
		\arrow[d, "\simeq"', "{X\mapsto B(X\cdot,\cdot)}"]
		&& T_{u_i}\A(V)
		\arrow[rr, "\simeq", "{\mathrm{can}}"']
		&& V
		\arrow[d, "\simeq"', "{v\mapsto B(v,\cdot)}"]
		\\
		\bigwedge^2 V^*
		\arrow[rrr, "{C_{u_i}}", "{\omega\mapsto \omega(u_i,\cdot)}"']
		\arrow[d, "\simeq"']
		&&& \operatorname{Ann}(u_i)
		\arrow[r, hookrightarrow]
		\arrow[d,"\simeq"',"{\psi_i}"]
		& V^*
		\\
		k^\Omega
		\arrow[rrr, "{\operatorname{pr}_i}"]
		&&& k^{A_i}
	\end{tikzcd}
	\]
	
	Here the left vertical isomorphism in the bottom rectangle is induced by the basis $(e_i\wedge e_j)_{1\leq i<j\leq n}$ of $\bigwedge^2 V^*$, where $e_1,\dots,e_n$ is the basis of $V^*$ dual to $u_1,\dots,u_n$, and $\psi_i$ is defined as
	\[
	\psi_i(\lambda)_{\{i,j\}}=
	\begin{cases}
		\lambda(u_j),  & i<j,\\
		-\lambda(u_j), & j<i.
	\end{cases}
	\]
	
	Therefore, for any linear subspace $L\subset T_g\OO(V,B)$, Lemma \ref{Lem:lin_alg_projections_inequality} implies 
	\begin{equation}
		\label{eq:main_estimate_exterior_alg_subspace_Lie_alg}	
		2\dim L\leq \sum_{i=1}^n \dim d\pi_i|_g(L).
	\end{equation}
	
	For $i=1,\dots,n$, let $Y_i=\overline{\pi_i(Z)}$. 
	The restriction of $\pi_i$ to $Z$ factors through a
	dominant morphism $Z\to Y_i$. Since the smooth locus of each $Y_i$ is a dense open and $Z$ is irreducible, there exists a $g\in Z$ such that $\pi_i(g)$ is a smooth point of $Y_i$ for each $i=1,\dots,n$. 
	
	Consider $L:=T_g Z\subset T_g\OO(V,B)$. The differential of $\pi_i|_Z$ at $g$ factors through $T_{\pi_i(g)}Y_i$. Therefore
	\begin{equation}
		\label{eq:exterior_alg_tg_space_in_factorization}	
		\dim d\pi_i|_g(L)\leq \dim T_{\pi_i(g)}Y_i=\dim Y_i.
	\end{equation}
	
	Combining
	\eqref{eq:main_estimate_exterior_alg_subspace_Lie_alg} and
	\eqref{eq:exterior_alg_tg_space_in_factorization}, we obtain
	
	\[
	2\dim Z\leq 2\dim L \leq\sum_{i=1}^n \dim d\pi_i|_g(L) \leq
	\sum_{i=1}^n \dim Y_i.\qedhere\]
	
\end{proof}

\subsection{Verifying the Brascamp--Lieb condition in a family}
\label{Sec:BL_family}

\begin{proof}[Proof of Corollary \ref{Cor:General_case_orthogonal}]
We can assume that $k$ is algebraically closed. It suffices to treat the case $G=\OO(V,B)$. The case $n=1$ is trivial, so assume $n\geq 2$. 

If $B_0$ and $B$ are two non-degenerate symmetric bilinear forms on $V$, there exists $L\in\GL_n(k)$ such that
$B(v,w)=B_0(Lv,Lw)$ for all $v,w\in V$. Then $L$ induces a bijection
\[T_{E,u}^{\OO(V,B)}\xrightarrow{\simeq}T_{L(E),L(u)}^{\OO(V,B_0)},\quad g\mapsto LgL^{-1}.\]  
So we fix a non-degenerate symmetric bilinear form $B_0$ on $V$. 

Proposition \ref{Prop:OO_satisfies_Brascamp_Lieb} along with Conjecture \ref{Conj:Brascamp_Lieb} immediately implies a weaker version of Corollary \ref{Cor:General_case_orthogonal} in which the implicit constant can depend on the basis $u_1,\dots,u_n$. In order to eliminate that dependence, we work with a family that represents a universal basis and apply Proposition \ref{Prop:OO_satisfies_Brascamp_Lieb} over the geometric generic fiber.   

For $j=1,\dots,n$, consider the map $\Pi_j: \GL_n\times\OO(V,B_0)\to \GL_n\times\A^n_k$ defined by $(T,g)\mapsto (T,gTe_j)$; here $e_j$ is the $j$-th standard basis vector of $k^n$, so $Te_j$ is the $j$-th column of $T$.

{\bf Claim.} Let $n\geq 2$. Let $Z$ be an integral subvariety of $\GL_n\times \OO(V,B_0)$. Let $Y_j$ denote the scheme-theoretic image of $Z\to \GL_n\times\OO(V,B_0)\xrightarrow{\Pi_j}\GL_n\times\A^n_k$, for $j=1,\dots,n$. Then
\begin{equation}
	\label{eq:BL_for_universal_Pi_j}
\dim Z\leq\frac{1}{2}\sum_{j=1}^n \dim Y_j.
\end{equation}

\begin{proof}
Let $S$ be the scheme-theoretic image of $Z\to \GL_n$ (see the diagram below). Let $K$ be an algebraic closure of the fraction field of $S$, so $\eta:\Spec K\to S$ is the geometric generic point of $S$. Consider the base change diagram below\footnote{$W$ and $Q_j$ will be introduced later; all other entries are obtained by base change. All monomorphism arrows are closed immersions.} --- all parallelograms are Cartesian. 

\[
\xymatrix{
	W\ar@{^{(}->}[rr]\ar[d] & {} & {\tikz[remember picture,baseline=(giantZeta.base)]\node[inner sep=0pt] (giantZeta) {$Z_\eta$};}\ar[rrrr]\ar@{^{(}->}[d]\ar[ddll] & {} & {} & {} & {\tikz[remember picture,baseline=(giantZ.base)]\node[inner sep=0pt] (giantZ) {$Z$};}\ar@{^{(}->}[d]\ar[ddll]\ar@/_2pc/[dddlll] \\
	Q_j\ar@{^{(}->}[rrdd]\ar@{^{(}.>}[d] & {} & \OO(K^n,B_0)\ar[rrrr]\ar[dd]^(.35){\textstyle \pi_j} & {} & {} & {} & {\tikz[remember picture,baseline=(giantGLOB.base)]\node[inner sep=0pt] (giantGLOB) {$\GL_n\times\OO(V,B_0)$};}\ar[dd]^(.41){\textstyle \Pi_j}^(.68){(T,g)\mapsto (T,gTe_j)} \\
	{\tikz[remember picture,baseline=(giantYjK.base)]\node[inner sep=0pt] (giantYjK) {$(Y_j)_K$};}\ar@{^{(}->}[rrd]\ar[rrrr] & {} & {} & {} & {\tikz[remember picture,baseline=(giantYj.base)]\node[inner sep=0pt] (giantYj) {$Y_j$};}\ar@{^{(}->}[rrd]\ar@{^{(}.>}[dl] & {} & {} \\
	{} & {} & {\tikz[remember picture,baseline=(giantAnK.base)]\node[inner sep=0pt] (giantAnK) {$\A^n_K$};}\ar[r]\ar[d] & {\tikz[remember picture,baseline=(giantPinvS.base)]\node[inner sep=0pt] (giantPinvS) {$p^{-1}(S)$};}\ar@{^{(}->}[rrr]\ar[d] & {} & {} & \GL_n\times\A^n_k\ar[d]^p \\
	{} & {} & \Spec K\ar[r]^\eta\ar@/_1pc/[rrrr]_{M_\eta} & {\tikz[remember picture,baseline=(giantS.base)]\node[inner sep=0pt] (giantS) {$S$};}\ar@{^{(}->}[rrr] & {} & {} & \GL_n
}
\]

\begin{tikzpicture}[remember picture,overlay]
	\coordinate (midYrow) at ($(giantYjK)!0.5!(giantYj)$);
	\coordinate (midArow) at ($(giantAnK)!0.5!(giantPinvS)$);
	\draw[->]
	($(giantZ.west)+(-0.2cm,-0.1cm)$)
	.. controls ($(midYrow)+(-0.2cm,1.0cm)$) and ($(midArow)+(-1.2cm,0.2cm)$) .. (giantS.north west);
\end{tikzpicture}

Set $d:=\dim S$, $r:=\dim Z_\eta$, and $r_j:=\dim (Y_j)_K$. Since $S\hookrightarrow \GL_n$ is a closed immersion, we have $Z_\eta\simeq Z\times_S \Spec K$. In particular, $Z_\eta$ is the (geometric) generic fiber of the dominant map $Z\to S$. Therefore 
\[\dim Z=d+r.\] 
The map $Y_j\hookrightarrow \GL_n\times\A^n_k$ factors through $p^{-1}(S)$, by the universal property of the scheme-theoretic image $Y_j$. Therefore, as in the case for $Z_\eta$ above, we have
$(Y_j)_K\simeq Y_j\times_S \Spec K$; thus $(Y_j)_K$ is the (geometric) generic fiber of the dominant map $Y_j\to S$, yielding
\[\dim Y_j=d+r_j.\]
Therefore \eqref{eq:BL_for_universal_Pi_j} is equivalent to $d+r\leq\frac{1}{2}\sum_{j=1}^n (d+r_j)$. Since $d\leq \frac{nd}{2}$, it suffices to establish
\begin{equation}
	\label{eq:BL_universal_family_suffices}
r\leq \frac{1}{2}\sum_{j=1}^n r_j.
\end{equation}

View $M_\eta$ (defined in the diagram) as an element in $\GL_n(K)$; write $\mathcal{u}_j\in K^n$ for the $j$-th column of $M_\eta$. After base change to $\Spec K$, the map $\Pi_j$ becomes $\pi_j:\OO(K^n,B_0)\to\A^n_K$, 
$g\mapsto g\mathcal{u}_j$. 

We apply Proposition \ref{Prop:OO_satisfies_Brascamp_Lieb} to the field $K$, the bilinear form on $K^n$ induced by $B_0$ (which we still denote by $B_0$), the basis $\mathcal{u}_1,\dots,\mathcal{u}_n$ of $K^n$, the maps $\pi_j$, and an irreducible component $W$ of $Z_\eta$ with $\dim W=r$.  

Let $Q_j$ denote the scheme-theoretic image of the composition $W\hookrightarrow\OO(K^n,B_0)\xrightarrow{\pi_j}\A^n_K$. By the universal property, the map $Q_j\hookrightarrow\A^n_K$ factors through $(Y_j)_K$. In particular, $\dim Q_j\leq r_j$. 

The first inequality below follows from Proposition \ref{Prop:OO_satisfies_Brascamp_Lieb}:
\[r\leq\frac{1}{2}\sum_{j=1}^n\dim Q_j\leq \frac{1}{2}\sum_{j=1}^n r_j,\]
establishing \eqref{eq:BL_universal_family_suffices}. 
\end{proof}

The claim implies that Conjecture \ref{Conj:Brascamp_Lieb} with $p_1=\dots=p_n=1/2$ applies to the maps $\Pi_j$, yielding a constant $C>0$ as in Conjecture \ref{Conj:Brascamp_Lieb}. Let $u_1,\dots,u_n$ be a basis of $V$. Let $T_0\in\GL_n(k)$ be the matrix whose $j$-th column is $u_j$. Take
\[A_j=\{T_0\}\times E\subset\GL_n(k)\times k^n;\quad\text{note that $|A_j|=|E|$}.\] 
There is a bijection 
\[\{ (T,g)\in \GL_n(k)\times\OO(V,B_0)(k)\ |\ \Pi_j(T,g)\in A_j\ \text{for all $j$} \}   \simeq  \{g\in \OO(V,B_0)(k)\ |\ gu_j\in E\ \text{for all $j$}\},\]
hence \eqref{eq:Brascamp_Lieb_conclusion} becomes \eqref{eq:O_n_goal_bound}. This establishes \eqref{eq:bound_cor_orthogonal}. 
\end{proof}

\subsection{The real orthogonal case}
\label{Sec:real_orthogonal_groups}

The following is a special case of \cite[Proposition 12]{BCELM}.

\begin{proposition}[Proposition 12 in \cite{BCELM}]
	Let $u_1,\dots,u_n$ be an orthonormal basis of $\R^n$ (with $n\geq 2$). Let $f_1,\dots,f_n:S^{n-1}\to \R$ be non-negative measurable functions. Let $\mu$ be the normalized bi-invariant Haar measure on $\SO(n)$. Then
	\begin{equation}\label{eq:BCELM_BL_Real_special_orthogonal}
		\int_{\SO(n)}\prod_{i=1}^n f_i(Uu_i)\ d\mu(U)\leq \prod_{i=1}^n \left(\int_{\SO(n)} f_i(Uu_i)^2\ d\mu(U)\right)^{1/2}.	
	\end{equation}
\end{proposition}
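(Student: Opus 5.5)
The plan is to prove \eqref{eq:BCELM_BL_Real_special_orthogonal} by a heat-flow monotonicity argument in the style of Carlen--Lieb--Loss. The combinatorial input is the same exact $2$-cover structure on the Lie algebra $\mathfrak{so}(n)$ that drives Proposition \ref{Prop:OO_satisfies_Brascamp_Lieb} via Lemma \ref{Lem:lin_alg_projections_inequality}.

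\textbf{Setup.} By density and monotone convergence, I would first reduce to $f_i$ smooth and strictly positive on $S^{n-1}$. Put $F_i(U):=f_i(Uu_i)^2$. Since $u_1,\dots,u_n$ is orthonormal, $\mathfrak{so}(n)$ has the basis $L_{jk}=u_ju_k^T-u_ku_j^T$ for $j<k$. Let $X_{jk}$ be the corresponding left-invariant vector fields on $\SO(n)$, so $X_{jk}F(U)=\frac{d}{dt}F(Ue^{tL_{jk}})|_{t=0}$. Because $e^{tL_{jk}}u_i=u_i$ whenever $i\notin\{j,k\}$, we get $X_{jk}F_i=0$ unless $i\in\{j,k\}$. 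This is exactly the statement that the index sets $A_i=\{\{i,k\}\}$ form an exact $2$-cover of the pairs, as in the proof of Proposition \ref{Prop:OO_satisfies_Brascamp_Lieb}.

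\textbf{The flow.} Let $\Delta=\sum_{j<k}X_{jk}^2$ be the bi-invariant Laplacian on $\SO(n)$. It commutes with right translations, so the heat flow $F_i^t:=e^{t\Delta}F_i$ is again a function of $Uu_i$ alone; concretely it is the spherical heat flow of $f_i^2$ up to time scaling. Define
\[
\Phi(t):=\int_{\SO(n)}\prod_{i=1}^n (F_i^t)^{1/2}\,d\mu .
\]
At $t=0$ this is the left-hand side of \eqref{eq:BCELM_BL_Real_special_orthogonal}. As $t\to\infty$, each $F_i^t$ tends to the constant $\int F_i\,d\mu$, so $\Phi(\infty)$ is the right-hand side. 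It therefore suffices to show $\Phi'(t)\ge 0$.

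\textbf{The derivative.} Write $G=\prod_i (F_i^t)^{1/2}$ and $h_i=\log F_i^t$. Integrating by parts against the self-adjoint operators $X_{jk}$, one gets
\[
\Phi'(t)=\tfrac12\sum_i\int G\, e^{-h_i}\Delta e^{h_i}\,d\mu
=\tfrac12\sum_{j<k}\int G\Big[\sum_i\big((X_{jk}h_i)^2 - X_{jk}h_i\cdot X_{jk}\log G\big)\Big]d\mu .
\]
By the support property, for each pair $\{j,k\}$ only $a:=X_{jk}h_j$ and $b:=X_{jk}h_k$ survive. Moreover $X_{jk}\log G=\frac12(a+b)$. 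The bracket for that pair is therefore
\[
a^2+b^2-\tfrac12(a+b)^2=\tfrac12(a-b)^2\ge 0 .
\]
Hence $\Phi'\ge 0$, which proves \eqref{eq:BCELM_BL_Real_special_orthogonal}.

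\textbf{Main obstacle.} The delicate point is this derivative computation. One has to get the integration by parts exactly right (using unimodularity and the vanishing of $X_{jk}F_i$ off the cover). One also has to check that the coefficient $1/2$ is precisely what turns each pair's contribution into a perfect square. If the bookkeeping does not close, the fallback is the Carlen--Cordero-Erausquin duality. There I would prove the equivalent entropy subadditivity $\sum_i\frac12\operatorname{Ent}(E_iF)\le \operatorname{Ent}(F)$, where $E_i$ is conditional expectation onto functions of $Uu_i$. The route is the same heat-flow/Fisher-information reduction, with the key pointwise input being $\sum_i|\nabla E_i\text{-part}|^2\le 2|\nabla F|^2$ coming from the $2$-cover.
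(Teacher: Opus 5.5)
Your proof is correct, and it takes a genuinely different route from the paper. The paper gives no analytic argument. It obtains the inequality as the special case $\cI=\{\{1\},\dots,\{n\}\}$, $p=2$, of \cite[Proposition~12]{BCELM}, after noting that this result allows any orthonormal basis and that the restrictions $U|_{\spn(u_i)}$ are parametrized by $S^{n-1}$.

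You instead reprove this case directly by heat-flow monotonicity, and your key computation closes. Since $X_{jk}h_i=0$ for $i\notin\{j,k\}$ and $X_{jk}\log G=\frac12(a+b)$, each pair contributes $a^2+b^2-\frac12(a+b)^2=\frac12(a-b)^2\ge 0$. Hence $\Phi'\ge 0$, and neither the ``main obstacle'' nor the entropy fallback is an issue.

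Two small corrections:
\begin{enumerate}
\item The $X_{jk}$ are skew-adjoint, not self-adjoint, on $L^2(\mu)$. The sign in your displayed formula is already the correct skew-adjoint one, so only the word needs changing.
\item Monotone convergence alone does not produce smooth approximants from below. Reduce instead by truncation, adding $\varepsilon$, and dominated convergence along a.e.\ convergent smooth approximants, using that $U\mapsto Uu_i$ pushes $\mu$ forward to $\sigma$.
\end{enumerate}

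As for the trade-off, the citation buys brevity and the full generality of \cite{BCELM}, namely arbitrary families $\cI$ with the matching exponents. Your argument is essentially the Markov-semigroup mechanism of \cite{BCELM} specialized to this case. It is self-contained and shows why the exponent is $1/2$: each direction $X_{jk}$ of $\mathfrak{so}(n)$ is seen by exactly the two functions $F_j,F_k$. This is the same exact $2$-cover of pairs (Lemma \ref{Lem:lin_alg_projections_inequality}) that drives the dimension condition in Proposition \ref{Prop:OO_satisfies_Brascamp_Lieb}, so your proof is a continuous analogue of that algebraic verification. Its only cost is routine analysis: smoothness and positivity of $F_i^t$, differentiation under the integral, and uniform convergence $F_i^t\to\int F_i\,d\mu$ as $t\to\infty$.
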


\begin{proof}
	In \cite[Proposition 12]{BCELM}, take $\cI=\{\{1\},\dots,\{n\}\}$; then $p=2$. Proposition 12 in \cite{BCELM} applies for any orthonormal basis $u_1,\dots,u_n$, not just the standard basis (see the paragraph preceding Proposition 11 in \cite{BCELM}). 
	Let $i\in\{1,\dots, n\}$, so $E_{\{i\}}=\spn(u_i)$. For $U\in\SO(n)$, the restriction $U|_{\spn(u_i)}$ is determined by $Uu_i\in S^{n-1}$; thus the space of restrictions $\{U|_{\spn(u_i)}: U\in \SO(n)\}$ is identified with $S^{n-1}$, which is the suitable space that \cite[Proposition 12]{BCELM} refers to in this particular case.   
\end{proof}

Let $\sigma$ be the normalized surface measure on $S^{n-1}$. Then for each $i$, the measure $\sigma$ coincides with the push-forward measure $(\pi_i)_*\mu$ of $\mu$ via $\pi_i:\SO(n)\to S^{n-1}$, $g\mapsto gu_i$; in other words, for each measurable set $B\subset S^{n-1}$, we have $\sigma(B)=\mu(\pi_i^{-1}(B))$. Then the integral appearing inside the $i$-th factor of the right-hand side of \eqref{eq:BCELM_BL_Real_special_orthogonal} becomes
$\int_{S^{n-1}}f_i(x)^2\ d\sigma(x)$. Therefore \eqref{eq:BCELM_BL_Real_special_orthogonal} becomes
\begin{equation}\label{eq:BCELM_BL_sphere_version}
	\int_{\SO(n)}\prod_{i=1}^n f_i(Uu_i)\ d\mu(U)\leq \prod_{i=1}^n \left(\int_{S^{n-1}} f_i(x)^2\ d\sigma(x)\right)^{1/2}.		
\end{equation}

Let $A\subset S^{n-1}$ be a measurable subset. Take each $f_i$ to be the indicator function of $A$, for $i=1,\dots,n$. Then \eqref{eq:BCELM_BL_sphere_version} gives
\begin{equation}\label{BCELM_measure_version}
	\mu\{U\in\SO(n)\ |\ Uu_i\in A \ \text{for each $i$}\}\leq \sigma(A)^{n/2}.	
\end{equation}

From here, a standard thickening argument yields the following unconditional case of Corollary \ref{Cor:General_case_orthogonal}. 

\begin{corollary}
	Let $E$ be a finite subset of $\R^n$. Let $u_1,\dots,u_n$ be an orthonormal basis of $\R^n$. Let $G$ be $\OO(n)$ or $\SO(n)$. Then
	\begin{equation}\label{eq:real_orthogonal_discrete_BL}
		\#\{U\in G(\R)\ |\ Uu_i\in E\ \text{for each $i$}\}\ll_n |E|^{n/2}.
	\end{equation}	
\end{corollary}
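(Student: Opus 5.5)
We will deduce the discrete bound \eqref{eq:real_orthogonal_discrete_BL} from the measure inequality \eqref{BCELM_measure_version} by thickening. First reduce to $G=\OO(n)$ and to $E\subset S^{n-1}$: since $u_i$ is a unit vector and $U$ preserves norms, any $U$ with $Uu_i\in E$ has $Uu_i\in E\cap S^{n-1}$, so we may replace $E$ by $E\cap S^{n-1}$. Write $\OO(n)=\SO(n)\sqcup \SO(n)R$ with $R$ a fixed reflection. Then $U\in\SO(n)R$ satisfies $Uu_i\in E$ for all $i$ if and only if $UR^{-1}\in\SO(n)$ maps the orthonormal basis $Ru_i$ into $E$. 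Since \eqref{BCELM_measure_version} holds for any orthonormal basis, it suffices to treat $G=\SO(n)$, at the cost of a factor $2$.

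Let $T=\{U\in\SO(n)\mid Uu_i\in E\ \forall i\}$, a finite set. Choose $\varepsilon>0$ smaller than half the minimum distance between distinct points of $E$, and also such that the $\varepsilon$-balls around the elements of $T$ in $\SO(n)$ are pairwise disjoint. We use the operator norm metric, which is bi-invariant.

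Let $A=\bigcup_{x\in E}\bigl(B(x,\varepsilon)\cap S^{n-1}\bigr)$. Then $\sigma(A)\leq |E|\,\sigma(\text{cap of radius }\varepsilon)\asymp_n |E|\varepsilon^{n-1}$ for small $\varepsilon$.

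Conversely, take $U\in T$ and $V$ with $\|V-U\|<\varepsilon$. Then $|Vu_i-Uu_i|<\varepsilon$, so $Vu_i\in A$ for all $i$. Hence the left side of \eqref{BCELM_measure_version} is at least
\[
|T|\,\mu(B_{\SO(n)}(I,\varepsilon))\asymp_n |T|\,\varepsilon^{\dim\SO(n)}=|T|\,\varepsilon^{n(n-1)/2},
\]
using bi-invariance and that small balls in the Lie group $\SO(n)$ have Haar measure comparable to $\varepsilon^{n(n-1)/2}$.

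Combining with \eqref{BCELM_measure_version}:
\[
|T|\,\varepsilon^{n(n-1)/2}\ll_n \bigl(|E|\varepsilon^{n-1}\bigr)^{n/2}=|E|^{n/2}\varepsilon^{n(n-1)/2}.
\]
The powers of $\varepsilon$ cancel exactly, which is the point of the exponent $n/2$, and we get $|T|\ll_n|E|^{n/2}$.

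The only genuinely technical step is the uniform volume asymptotics: spherical caps have $\sigma$-measure $\asymp_n\varepsilon^{n-1}$, and Haar balls have measure $\gg_n \varepsilon^{n(n-1)/2}$, both uniformly for $\varepsilon\le\varepsilon_0(n)$. These are standard (exponential chart at the identity, with Jacobian close to $1$). Since $\varepsilon$ may be taken arbitrarily small, only these small-$\varepsilon$ estimates are needed.
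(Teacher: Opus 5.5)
Your proof is correct and follows the paper's own argument essentially step by step. You reduce to $\SO(n)$ and $E\subset S^{n-1}$, thicken $E$ by spherical caps and each element of $T$ by a small bi-invariant ball in $\SO(n)$, apply \eqref{BCELM_measure_version}, and cancel the matching powers $\varepsilon^{n(n-1)/2}$. The differences are cosmetic: you use the operator norm rather than the Hilbert--Schmidt norm, and you spell out the $\OO(n)\to\SO(n)$ reduction via a reflection, which the paper only asserts. Like the paper, you should state $n\geq 2$, since \eqref{BCELM_measure_version} is only available there; the case $n=1$ is trivial.
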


\begin{proof}
It suffices to treat the case $G=\SO(n)$. 	
	Since $Uu_i\in S^{n-1}$ for every $U\in\SO(n)$, replacing
	$E$ by $E\cap S^{n-1}$ leaves the set being counted unchanged and
	does not increase $|E|$. We may therefore assume that
	$E\subset S^{n-1}$. Also assume $n\geq 2$. 
	For $r>0$ and $x\in S^{n-1}$, define 
	\[C(x,r):=\{y\in S^{n-1}\ |\ |y-x|<r\}.\] 
	For sufficiently small $r$, consider $A_r:=\bigcup_{x\in E}C(x,r)$. For each $x\in E$, we have $\sigma(C(x,r))\ll_n r^{n-1}$ and consequently \begin{equation}\label{eq:spherical_cap}
		\sigma(A_r)\ll_n |E|r^{n-1}.
	\end{equation} 	
	
	Let $T=T^{\SO(n)}_{E,u}$ be the (finite) set whose cardinality appears on the left-hand side of \eqref{eq:real_orthogonal_discrete_BL}. For $U\in T$, define 
	\[B_r(U):=\{V\in \SO(n)\ |\ \lVert V-U\rVert_{\text{HS}}<r\},\]
	where $\lVert\cdot\rVert_{\text{HS}}$ is the Hilbert--Schmidt norm on matrices, induced from the Euclidean norm on $\R^{n^2}$. For sufficiently small $r>0$, the balls $\{B_r(U)\}_{U\in T}$ are disjoint and 
	\[\bigcup_{U\in T}B_r(U)\subset \{V\in\SO(n)\ |\ Vu_i\in A_r\ \text{for each $i$}\}.\]
	Each $\mu(B_r(U))\gg_n r^{\dim\SO(n)}=r^{n(n-1)/2}$. Consequently
	\begin{align*}
		|T|r^{n(n-1)/2} \ll_n\mu\left(\bigcup_{U\in T}B_r(U)\right) &\leq \mu \{V\in\SO(n)\ |\ Vu_i\in A_r\ \text{for each $i$}\}\\
		& \leq \sigma(A_r)^{n/2} &&\text{by \eqref{BCELM_measure_version}}\\ 
		&\ll_n (|E|r^{n-1})^{n/2} && \text{by \eqref{eq:spherical_cap}}\\
		&=|E|^{n/2}r^{n(n-1)/2}.
	\end{align*} 
	After canceling $r^{n(n-1)/2}$ on both sides, we obtain \eqref{eq:real_orthogonal_discrete_BL}. 
\end{proof}

\subsection{Alternative versions of the Brascamp--Lieb conjectural inequality}
\label{Sec:BL_Alternative_versions}

One way to eliminate the dependence on $k$ of the implied constant in \eqref{eq:bound_cor_orthogonal} is to state the following stronger version of Conjecture \ref{Conj:Brascamp_Lieb}. 

We say that an affine variety $X\subset\A^r$ has complexity $\leq M$ if $r\leq M$ and $X$ can be defined in $\A^r$ by $\leq M$ equations, each of degree $\leq M$. We say that a morphism $X\to Y$ of affine varieties $X\subset\A^r$ and $Y\subset \A^s$ (each of which has complexity $\leq M$) has complexity $\leq M$ if its graph 
in $\A^r\times\A^s$ has complexity $\leq M$. 

\begin{conjecture}
\label{Conj:BL_complexity}	
	Given $M>0$ and $p_1,\dots,p_m\in (0,1]$, there exists a $C>0$ with the following property. Let $k$ be a field. Let $X, Y_1, \dots, Y_m$ be  affine varieties over $k$ of complexity $\leq M$, and let $\pi_j: X\to Y_j$ be morphisms of complexity $\leq M$ (for $j=1,\dots,m$).
	Suppose that for every irreducible subvariety $Z\subset X_{\overline{k}}$,
	\begin{equation}
		\dim Z\leq \sum_{j=1}^m p_j\dim \overline{(\pi_{j})_{\overline{k}}(Z)}.
	\end{equation}
	Let $A_j\subset Y_j(k)$ be finite subsets. Then
	\begin{equation}
		\#\{x\in X(k)\ |\ \pi_j(x)\in A_j\text{ for all }j\}
		\leq C\prod_{j=1}^m |A_j|^{p_j}.
	\end{equation}
\end{conjecture}

This strong version (together with Proposition \ref{Prop:OO_satisfies_Brascamp_Lieb}) directly implies Corollary \ref{Cor:General_case_orthogonal} with $\ll_n$ in the estimate 
\eqref{eq:bound_cor_orthogonal}
and without the need for the argument in Section \ref{Sec:BL_family}.

Alternatively, Corollary \ref{Cor:General_case_orthogonal} would follow with $\ll_n$ in place of $\ll_{n,k}$ from the following version. 

\begin{conjecture}
	\label{Conj_BL_family}
Let $R_0$ be a ring, let $X, Y_1, \dots, Y_m$ be affine schemes of finite type over $R_0$, and let $\pi_j: X\to Y_j$ be morphisms ($j=1,\dots,m$). Let $p_1,\dots,p_m\in (0,1]$. Suppose that
for every field $k$ with a map $R_0\to k$,
\begin{equation*}
	\dim Z\leq \sum_{j=1}^m p_j\dim \overline{(\pi_{j})_{\overline{k}}(Z)}\quad\text{for every irreducible subvariety $Z\subset X_{\overline{k}}$}.
\end{equation*}
Then there exists a $C>0$ such that for every field $k$ with a map $R_0\to k$ and all finite subsets $A_j\subset Y_j(k)$, the following holds:
\begin{equation*}
	\#\{x\in X(k)\ |\ \pi_j(x)\in A_j\text{ for all }j\}
	\leq C\prod_{j=1}^m |A_j|^{p_j}.
\end{equation*}
\end{conjecture}

The reason this version implies Corollary \ref{Cor:General_case_orthogonal} with $\ll_n$ in \eqref{eq:bound_cor_orthogonal} is as follows. In the argument from Section \ref{Sec:BL_family}, take the standard form $B_0$.
The morphisms $\Pi_j$ are defined over $\mathbb Z$, hence in particular over $\mathbb Z[1/2]$. Taking $R_0=\mathbb Z[1/2]$, the Claim in the proof of Corollary \ref{Cor:General_case_orthogonal}
(based on Proposition \ref{Prop:OO_satisfies_Brascamp_Lieb}) verifies the dimension hypothesis for every field $k$ with a map $R_0\to k$.

However, we choose the version in Conjecture \ref{Conj:Brascamp_Lieb} because it is most intuitive and in line with the analogous statements in the literature in other categories. 

\section{Setwise stabilizers in characteristic zero}
\label{Sec:char_0}

We now prove Theorem \ref{Thm:bound_char_0_rank} by the following steps. 

\subsection{Reduction to the abelian case}

We apply Jordan's theorem: see \cite[Theorem~0.1]{LarsenPink} for a modern reference or \cite[p.~114]{Jordan1878} for the original source. 

\begin{theorem}[Jordan's theorem]
	\label{thm:jordan}
	For every positive integer $n$, there is a constant $J(n)$ such that if
	$k$ is a field of characteristic $0$ and $H\subset \GL_n(k)$ is finite,
	then $H$ contains an abelian normal subgroup $A$ with $[H:A]\leq J(n)$.
\end{theorem}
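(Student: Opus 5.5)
Since this is a classical result, cited from \cite[Theorem~0.1]{LarsenPink}, I would follow the standard Frobenius--Bieberbach route: reduce to finite subgroups of the unitary group $U(n)$, then take the subgroup generated by elements close to the identity.

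\emph{Reduction to $\C$ and to $U(n)$.} Since $H$ is finite, its matrix entries generate a finitely generated field $k_0\subset k$ of characteristic $0$. Such a field embeds into $\C$, so we may assume $k=\C$. Averaging any Hermitian inner product over $H$ gives an $H$-invariant one. After a change of basis this places $H\subset U(n)$. The constant $J(n)$ will depend only on $n$, via compactness of $U(n)$.

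\emph{The near-identity subgroup.} Use the operator norm $\|\cdot\|$; it is invariant under unitary conjugation and multiplication. Fix a small $\varepsilon$ (for instance $\varepsilon=1/4$) and set $N:=\{h\in H : \|h-1\|<\varepsilon\}$. Let $A:=\langle N\rangle$. Since $N$ is stable under conjugation by $H$, the subgroup $A$ is normal in $H$.

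\emph{Bounded index.} Cover $U(n)$ by $c(n)$ balls of radius $\varepsilon/2$. If $[H:A]>c(n)$, then two coset representatives $h,h'$ in different cosets of $A$ lie in the same ball. Then $\|h^{-1}h'-1\|<\varepsilon$, so $h^{-1}h'\in N\subset A$, a contradiction. Thus $[H:A]\le c(n)=:J(n)$.

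\emph{Commutativity (the main step).} First I would prove the commutator estimate $\|[a,b]-1\|\le 2\|a-1\|\,\|b-1\|$ for unitary $a,b$. This follows by writing $ab-ba=(a-1)(b-1)-(b-1)(a-1)$ and using that multiplication by unitaries is an isometry.

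Take $a,b\in N$ and define $b_0=b$, $b_{i+1}=[a,b_i]$. Then $\|b_i-1\|\le (2\varepsilon)^i\|b-1\|\to 0$. These $b_i$ lie in the finite group $H$, and the distances from $H\setminus\{1\}$ to $1$ are bounded below. Hence $b_m=1$ for some $m$.

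From $b_m=1$, I would show that $a$ commutes with $b$. Set $b_{m-1}=c$, so $c$ commutes with $a$. Diagonalize $a$ unitarily; its eigenvalues lie in a short arc, because $\|a-1\|<\varepsilon$. The relation $a b_{m-2} a^{-1}=c\, b_{m-2}$ with $c\in Z(a)$ then forces $c=1$. The point is that $c$ acts on each eigenspace of $a$, and conjugation by $a$ can only produce unitary factors close to $1$ compatible with eigenvalue ratios near $1$. Concretely, one compares the eigenvalues of $a$ and $c a$, which are similar matrices. Descending induction then gives $[a,b]=1$.

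This last step is where the bookkeeping is most delicate, and I expect it to be the main obstacle. The fallback is the cleaner classical argument: first show $A$ is nilpotent via the contraction above, which is Zassenhaus's lemma. Then use that a finite nilpotent unitary group whose generators have eigenvalues in a short arc is abelian, since each Sylow piece is simultaneously diagonalizable up to index bounded in $n$. That again yields an abelian normal subgroup of index bounded in terms of $n$ alone.
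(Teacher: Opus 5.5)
The paper gives no proof of this statement; it cites \cite[Theorem~0.1]{LarsenPink} and Jordan's memoir. Your Frobenius--Bieberbach outline is the right classical route. The reduction to $U(n)$, the normality of $A=\langle N\rangle$, the covering argument for $[H:A]\le c(n)$, the commutator estimate, and the conclusion $b_m=1$ are all fine.

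\textbf{The gap.} The step from $b_m=1$ to $[a,b]=1$ does not work as proposed. You try to force $c=1$ from three facts: $a$ has eigenvalues in a short arc, $c\in Z(a)$, and $a$ is similar to $c^{\pm1}a$. (From $c=a\,b_{m-2}a^{-1}b_{m-2}^{-1}$ one actually gets $c^{-1}a=b_{m-2}ab_{m-2}^{-1}$, not $ca\sim a$.) These facts do not imply $c=1$. Take $a=\diag(\lambda,\mu)$ with $\lambda\neq\mu$ both arbitrarily close to $1$, and $b=\begin{pmatrix}0&1\\1&0\end{pmatrix}$. Then $c=[a,b]=\diag(\lambda\mu^{-1},\mu\lambda^{-1})$ commutes with $a$ and is itself close to $1$. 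Also $c^{-1}a=\diag(\mu,\lambda)$ is similar to $a$. Yet $c\neq1$. So no argument that uses only the closeness of $a$ (and of $c$) to $1$ can succeed. What fails in this example is that the conjugating element $b$ is far from $1$.

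\textbf{The missing ingredient.} You need closeness of $b_{m-2}$ to $1$. You have it, since $\|b_i-1\|\le\|b-1\|<\varepsilon$, but you never use it. The needed lemma: if $a,b\in U(n)$, $\|b-1\|<\sqrt2$, and $a$ commutes with $[a,b]$, then $ab=ba$.

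\emph{Proof of the lemma.} Since $a$ commutes with $aba^{-1}b^{-1}$, it commutes with $bab^{-1}$. Hence $bab^{-1}$ preserves each eigenspace $V_\lambda$ of $a$. Because $bV_\mu$ is the $\mu$-eigenspace of $bab^{-1}$, we get $V_\lambda=\bigoplus_\mu(V_\lambda\cap bV_\mu)$. Suppose $0\neq w\in V_\mu$ with $bw\in V_\lambda$ and $\lambda\neq\mu$. The eigenspaces of the unitary $a$ are orthogonal, so $bw\perp w$. Then $\|bw-w\|=\sqrt2\,\|w\|$, contradicting $\|b-1\|<\sqrt2$. Thus $V_\lambda\subset bV_\lambda$, with equality by dimension, so $b$ commutes with $a$.

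\emph{Applying it.} Choose $m$ minimal with $b_m=1$. If $m\ge2$, apply the lemma to $b_{m-2}$; this gives $b_{m-1}=1$, contradicting minimality. Hence $[a,b]=1$, and $A=\langle N\rangle$ is abelian.

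Your fallback does not close the gap either. The claim that each Sylow subgroup is ``simultaneously diagonalizable up to index bounded in $n$'' is essentially Jordan's theorem for $p$-groups, stated without proof. Even granted, it would only give an abelian subgroup of bounded index, not that $A$ itself is abelian.
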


By \eqref{eq:comparison_R_E_T_E}, $R_E^G$ is finite. So Jordan's theorem gives an abelian $A\subset R_E^G$ whose index in $R_E^G$ is at most $J(n)$. Then 
\[|R_E^G|=[R_E^G:A]|A|\ll_n |A|.\]

Thus, to establish Theorem \ref{Thm:bound_char_0_rank}, it suffices to prove that $|A|\ll_n |E|^{\rank G}$ whenever $A$ is a finite abelian subgroup of $G(k)$ that preserves $E$ setwise. 

\subsection{Further reductions and preparations}
\label{Sec:char_0_preparations}

The case $G=\GL_n$ follows from the trivial bound \eqref{eq:trivial_bound} and the comparison \eqref{eq:comparison_R_E_T_E}. 

We may and shall assume that $k$ is algebraically closed. Since the cases with $n=1$ are trivial, we assume $n\geq 2$.

Let $\widehat{A}:=\Hom(A,k^*)$ be the character group of $A$. 

Our tool for giving a bound for $|A|$ is the following lemma. 

\begin{lemma}
	\label{lem:orbit_counting}
	Let $A$ be a finite abelian group acting linearly on a vector space $V$, and let
	$E\subset V$ be a finite subset which spans $V$. Suppose that $A$ stabilizes $E$ setwise.  Suppose there are
	characters $\lambda_1,\dots,\lambda_s\in\widehat{A}$ and nonzero linear forms
	$\ell_1,\dots,\ell_s\in V^*$ which satisfy
	\vspace*{-0.5cm}
	\[
	\ell_i(av)=\lambda_i(a)\ell_i(v)
	\qquad \text{for all $a\in A$, $v\in V$, and $1\leq i\leq s$.}
	\hfill\hspace*{3cm}
	\vcenter{\hbox{\Large$\xymatrix@=1.2em{
				V \ar[d]_{a} \ar[r]^{\ell_i} & k \ar[d]^{\lambda_i(a)} \\
				V \ar[r]^{\ell_i} & k
			}$}}
	\]
	\vspace*{-0.5cm}
	
	Let $K:=\bigcap\ker\lambda_i$. Then $|A|\leq |K||E|^s$. 
\end{lemma}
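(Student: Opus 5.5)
The plan is to study the map $\Lambda: A\to (k^*)^s$, $a\mapsto(\lambda_1(a),\dots,\lambda_s(a))$. Its kernel is exactly $K=\bigcap_i\ker\lambda_i$, so $|A|=|K|\cdot|\Lambda(A)|$. It therefore suffices to show that $|\Lambda(A)|\leq |E|^s$, and for this it is enough to bound each coordinate separately: $|\lambda_i(A)|\leq |E|$ for every $i$. Then $|\Lambda(A)|\leq\prod_i|\lambda_i(A)|\leq |E|^s$.

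To bound $\lambda_i(A)$, use the hypotheses on $\ell_i$ and $E$. Since $\ell_i\neq 0$ and $E$ spans $V$, there is some $v_i\in E$ with $\ell_i(v_i)\neq 0$. Consider the map $\lambda_i(A)\to k$ given by $\lambda_i(a)\mapsto \lambda_i(a)\ell_i(v_i)$. It is well defined, and it is injective because $\ell_i(v_i)\neq 0$. By the equivariance relation, its value equals $\ell_i(av_i)$. Since $A$ stabilizes $E$, we have $av_i\in E$, so every value lies in the set $\ell_i(E)$. Hence $|\lambda_i(A)|\leq|\ell_i(E)|\leq|E|$.

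Combining the two steps gives $|A|=|K|\,|\Lambda(A)|\leq |K|\,|E|^s$. The only point needing care is the choice of $v_i\in E$ with $\ell_i(v_i)\neq0$, which is exactly where the spanning hypothesis enters; there is no real obstacle beyond that.
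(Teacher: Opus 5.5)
Your proof is correct and is essentially the paper's argument. The paper also picks $v_i\in E$ with $\ell_i(v_i)\neq 0$, but it counts via the orbit map $a\mapsto(av_1,\dots,av_s)\in E^s$ and shows that its fibers have size at most $|K|$; you compose this map with the $\ell_i$, landing on $\Lambda(a)=(\lambda_i(a))_i$, so the fiber bound becomes the exact identity $|A|=|K|\,|\Lambda(A)|$ and you get the marginally sharper bound $|A|\leq |K|\prod_i|\ell_i(E)|$.
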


\begin{remark}
The case $K=\{1\}$ of Lemma \ref{lem:orbit_counting} stems from the standard base-counting argument from permutation group theory. 
Let a finite group $G$ act faithfully on a finite set $E$. As in \cite{Duyan_Halasi_Maroti_Adv}, a \defi{base} is a tuple
	$(e_1,\dots,e_s)$ of elements of $E$ such that
$\bigcap_{i=1}^s\operatorname{Stab}_G(e_i)=\{1\}$. The map $G\to E^s$, $g\mapsto (ge_1,\dots,ge_s)$ is injective, hence $|G|\leq |E|^s$.  More generally, one can
	consider a tuple $(e_1,\dots,e_s)$ such that $\bigcap_{i=1}^s\operatorname{Stab}_G(e_i)\subset K$, where $K$ is a fixed subgroup of $G$; see, for example,
	\cite[Section~2]{Duyan_Halasi_Maroti_Adv}. Then the orbit--stabilizer theorem applied to the diagonal action of $G$ on $E^s$ and the orbit of $(e_1,\dots,e_s)$ gives $|G|\leq |K|\,|E|^s$. The case of Lemma \ref{lem:orbit_counting} allowing a nontrivial $K$ is based on this idea. 
\end{remark}

\begin{proof}[Proof of Lemma \ref{lem:orbit_counting}]
	Since $E$ spans $V$ and $\ell_i\neq 0$, the linear form $\ell_i$ cannot
	vanish on every element of $E$. Hence, for each $i$, we can choose
	$v_i\in E$ with $\ell_i(v_i)\neq 0$.
	
	Consider the map
	\[
	\Phi:A\longrightarrow Av_1\times\cdots\times Av_s,
	\qquad
	\Phi(a)=(av_1,\dots,av_s).
	\]
	We prove that each fiber of $\Phi$ has size at most $|K|$. Suppose that $a,b\in A$ have
	the same image. Then
	$av_i=bv_i$
	for every $i$, so $c:=b^{-1}a$ fixes every $v_i$. Applying $\ell_i$ to
	$cv_i=v_i$, we get
	\[
	\lambda_i(c)\ell_i(v_i)=\ell_i(cv_i)=\ell_i(v_i).
	\]
	Because $\ell_i(v_i)\neq 0$, this implies $\lambda_i(c)=1$ for all $i$, hence $c\in K$.
	
	Each orbit $Av_i$ is contained in $E$, because $A$ acts on $E$ and $v_i\in E$.
	Therefore
	\[
	|A|\leq |K||\Phi(A)|\leq |K||Av_1|\cdots |Av_s|\leq |K||E|^s.\qedhere
	\]
\end{proof}

Since $A$ is a finite abelian group acting on the finite-dimensional vector space $V$ over an algebraically closed field of characteristic zero, $A$ is simultaneously diagonalizable. Let $e_1,\dots,e_n$ be a basis of $V$ consisting of simultaneous eigenvectors. Let $i\in\{1,\dots,n\}$. For each $a\in A$, we can write
\[ae_i=\chi_i(a)e_i\]
with a scalar $\chi_i(a)$. It is immediate to verify that $\chi_i:A\to k^*$ is a character of $A$.

\subsection{The case of $\SL_n$}

\begin{lemma}
	\label{lem:sl_bound}
	Let $k$ be an algebraically closed field of characteristic $0$, and let $V=k^n$. Let
	$A\subset \SL_n(k)$ be a finite abelian subgroup, and let 
	$E\subset V$ be
	a finite subset that spans $V$. Suppose that $E$ is preserved by $A$. Then
	\[
	|A|\leq |E|^{n-1}.
	\]
\end{lemma}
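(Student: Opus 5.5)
We plan to apply Lemma \ref{lem:orbit_counting} with $s=n-1$ characters and the trivial kernel $K=\{1\}$. As set up at the end of Section \ref{Sec:char_0_preparations}, $A$ is simultaneously diagonalizable. So we fix a basis $e_1,\dots,e_n$ of simultaneous eigenvectors, with characters $\chi_1,\dots,\chi_n\in\widehat A$ defined by $ae_i=\chi_i(a)e_i$. Let $e_1^*,\dots,e_n^*$ be the dual basis of $V^*$. Then for $a\in A$ and $v\in V$,
\[
e_i^*(av)=\chi_i(a)\,e_i^*(v).
\]
So the pairs $(\lambda_i,\ell_i)=(\chi_i,e_i^*)$ satisfy the hypothesis of Lemma \ref{lem:orbit_counting}, and each $\ell_i=e_i^*$ is nonzero.

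We take only the first $n-1$ of these pairs, $i=1,\dots,n-1$, and then compute the kernel
\[
K=\bigcap_{i=1}^{n-1}\ker\chi_i .
\]
Suppose $a\in K$. Then $\chi_1(a)=\dots=\chi_{n-1}(a)=1$. Since $a\in\SL_n(k)$ is diagonal in the basis $e_1,\dots,e_n$, we have
\[
1=\det a=\chi_1(a)\cdots\chi_n(a)=\chi_n(a).
\]
Hence all $\chi_i(a)=1$, so $a$ acts as the identity on $V$. Since $A\subset\SL_n(k)$ acts faithfully on $V$, this gives $a=1$, and therefore $K=\{1\}$.

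Lemma \ref{lem:orbit_counting}, with $s=n-1$, then gives $|A|\leq |K|\,|E|^{n-1}=|E|^{n-1}$. Its remaining hypotheses are that $E$ spans $V$ and that $A$ stabilizes $E$, and both are part of the assumptions of Lemma \ref{lem:sl_bound}.

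There is no substantive obstacle. The only point needing care is the determinant argument, which uses one fewer character than the $n$ needed for $\GL_n$: the determinant-one condition forces the last eigenvalue once the other $n-1$ are trivial, and this is exactly what saves one power of $|E|$.
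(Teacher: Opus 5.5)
Your proposal is correct and follows the paper's proof: you apply Lemma~\ref{lem:orbit_counting} to $\chi_1,\dots,\chi_{n-1}$ and the dual forms $e_1^*,\dots,e_{n-1}^*$, and you use $\det a=1$ to force $\chi_n(a)=1$, so that $K=\{1\}$. The paper reaches the same conclusion by computing $\det(ae_1,\dots,ae_n)=\chi_n(a)\det(e_1,\dots,e_n)$.
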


\begin{proof}[Proof of Lemma \ref{lem:sl_bound}]
	Consider a basis of simultaneous eigenvectors $e_1,\dots,e_n$ for $A$ and the corresponding characters $\chi_1,\dots,\chi_n\in\widehat{A}$ as in the end of Section \ref{Sec:char_0_preparations}. Let $e_1^*,\dots,e_n^*$ be the basis of $V^*$ dual to $e_1,\dots,e_n$. 
	
	We apply Lemma \ref{lem:orbit_counting} to the characters $\chi_1,\dots,\chi_{n-1}\in\widehat{A}$ and the linear forms $e_1^*,\dots,e_{n-1}^*\in V^*$. The statement of the lemma will follow once we check that $\bigcap_{i=1}^{n-1}\ker\chi_i=\{1\}$. Suppose that $a\in A$ fixes $e_1,\dots,e_{n-1}$. Then the determinant condition $\det(a)=1$ will force $a=1$:
	\[\det(e_1,\dots,e_n)=\det(ae_1,\dots, ae_n)=\det(e_1,\dots,e_{n-1},\chi_n(a)e_n)=\chi_n(a)\det(e_1,\dots,e_n),\]
	giving $\chi_n(a)=1$, hence $a$ fixes $e_n$ as well. Thus $a=1$. 	
\end{proof}

The preceding argument is simple because every finite abelian subgroup of
$\SL_n(k)$ is contained in a maximal torus. However, finite abelian subgroups of $\OO(V,B)$ or $\SO(V,B)$
need not be contained in a maximal torus. For example, in $\SO_3(k)$, the subgroup
of diagonal sign matrices of determinant $1$ is isomorphic to
$(\mathbb Z/2\mathbb Z)^2$, whereas a maximal torus in $\SO_3(k)$ has only one
nontrivial element of order $2$.
The next section develops a replacement argument using the pairing between
character spaces induced by the bilinear form.

\subsection{Character spaces and bilinear forms}

Let $k$ be an algebraically closed field of characteristic zero. 
Let $A$ be a finite abelian group acting on a finite-dimensional vector space $V$ over $k$. 
For a character $\chi\in\widehat{A}$, define the $\chi$-eigenspace as
\[
V_\chi=
\{v\in V\ |\ av=\chi(a)v \text{ for every } a\in A\}.
\]
It is an $A$-invariant subspace. Let $X=\{\chi\in\widehat{A}\ |\ V_\chi\neq \{0\}\}$. We have a direct sum decomposition
\begin{equation}
	V=\bigoplus_{\chi\in\widehat{A}} V_\chi=\bigoplus_{\chi\in X} V_\chi
	\label{eq:char_space_decomposition}	
\end{equation}

(this is well-known; see, for example, Theorem 9.3 in \cite{Conrad_FiniteAbelianCharacters}). The following elementary lemma is standard --- see, for example, the argument in \cite[Lemma 7.17]{MagaardMalleTiep} --- but we include a short proof for completeness.  

\begin{lemma}
	\label{lem:characters_forms}
	Let $A$ be a finite abelian group acting linearly on $V$, and let
	$\beta$ be a bilinear form on $V$ preserved by $A$. Let $X=\{\chi\in\widehat{A}\ |\ V_\chi\neq\{0\}\}.$
	\begin{itemize}
		\item[a)] If $u\in V_\chi$ and $v\in V_\psi$ with $\chi,\psi\in X$, then $\beta(u,v)=0$ unless $\chi\psi=1$. 
		\item[b)] Suppose that $\beta$ is non-degenerate. Let $\chi\in\widehat{A}$. Then $\beta$ restricts to a non-degenerate pairing
		$V_\chi\times V_{\chi^{-1}}\to k.$
		In particular, $\chi\in X$ implies $\chi^{-1}\in X$, and $\dim V_\chi=\dim V_{\chi^{-1}}$.
	\end{itemize}
\end{lemma}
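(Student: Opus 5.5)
For part a), I will use $A$-invariance of $\beta$ directly. For $u\in V_\chi$, $v\in V_\psi$ and every $a\in A$ we have
\[
\beta(u,v)=\beta(au,av)=\chi(a)\psi(a)\beta(u,v).
\]
If $\chi\psi\neq 1$, pick $a$ with $(\chi\psi)(a)\neq 1$; then $(1-\chi(a)\psi(a))\beta(u,v)=0$ forces $\beta(u,v)=0$. Nothing beyond the definitions is needed here, and the argument works for any $\chi,\psi\in\widehat{A}$, not just those in $X$.

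For part b), I will combine a) with the decomposition \eqref{eq:char_space_decomposition}. Fix $\chi\in\widehat{A}$ and a vector $u\in V_\chi$ with $\beta(u,w)=0$ for all $w\in V_{\chi^{-1}}$. Any $v\in V$ can be written as $v=\sum_{\psi}v_\psi$ with $v_\psi\in V_\psi$. By a), $\beta(u,v_\psi)=0$ whenever $\psi\neq\chi^{-1}$, and by assumption also for $\psi=\chi^{-1}$. Hence $\beta(u,v)=0$ for all $v\in V$, and non-degeneracy of $\beta$ (in the first slot) gives $u=0$. The symmetric argument, using non-degeneracy in the second slot, shows that the right kernel of the restricted pairing $V_\chi\times V_{\chi^{-1}}\to k$ is also trivial.

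A pairing between finite-dimensional spaces with both kernels trivial gives injections $V_\chi\hookrightarrow V_{\chi^{-1}}^*$ and $V_{\chi^{-1}}\hookrightarrow V_\chi^*$. Therefore $\dim V_\chi=\dim V_{\chi^{-1}}$, and the pairing is perfect. In particular, if $\chi\in X$ then $V_\chi\neq 0$, so $V_{\chi^{-1}}\neq 0$ and $\chi^{-1}\in X$.

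The only point needing care is that $\beta$ is not assumed symmetric, so non-degeneracy must be applied in each slot separately. For a non-degenerate bilinear form on a finite-dimensional space the left and right kernels are simultaneously trivial, so this causes no difficulty. I do not expect a genuine obstacle; the essential input is the eigenspace decomposition \eqref{eq:char_space_decomposition}, which relies on $k$ being algebraically closed of characteristic zero.
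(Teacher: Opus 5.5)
Your proposal is correct and follows the paper's proof essentially verbatim: part a) comes from $\beta(u,v)=\beta(au,av)=\chi(a)\psi(a)\beta(u,v)$, and part b) combines a) with the eigenspace decomposition \eqref{eq:char_space_decomposition} and the non-degeneracy of $\beta$. Your additional remarks spell out what the paper leaves implicit: that a) holds for all characters, that non-degeneracy is applied in each slot separately, and that the dual-injection argument gives $\dim V_\chi=\dim V_{\chi^{-1}}$.
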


\begin{proof}
	{\it a)} Let $u\in V_\chi$ and $v\in V_\psi$. For every $a\in A$, we have
	\[\beta(u,v)=\beta(au,av)=\beta(\chi(a)u,\psi(a)v)=\chi(a)\psi(a)\beta(u,v).\]
	
	Thus either $\beta(u,v)=0$, or else $\chi(a)\psi(a)=1$ for every $a\in A$, giving $\chi\psi=1$. 
	
	{\it b)} Suppose that $u\in V_\chi$ pairs
	trivially with every vector of $V_{\chi^{-1}}$. For every $\psi\neq\chi^{-1}$, part a) implies that $u$ pairs trivially with all vectors in $V_\psi$. But then the decomposition \eqref{eq:char_space_decomposition} implies that $u$ pairs trivially with all vectors of $V$. Since $\beta$ is non-degenerate, $u=0$. Similarly, only the vector $v=0$ in $V_{\chi^{-1}}$ pairs trivially with all vectors in $V_\chi$. 
\end{proof}

\subsection{The orthogonal and symplectic cases}

We treat the orthogonal and symplectic cases by a uniform argument.

\begin{proposition}
	\label{prop:form_preserving_bound}
	Let $k$ be an algebraically closed field of characteristic $0$. Let $V$ be
	an $n$-dimensional $k$-vector space equipped with a non-degenerate bilinear
	form $\beta$. Let $A\subset \GL(V)$ be a finite abelian subgroup which preserves $\beta$. Let $E\subset V$ be a finite subset that spans $V$. Suppose that $A$ preserves $E$ setwise. Then
	\[
	|A|\leq 2^n |E|^{\lfloor n/2\rfloor};\qquad\text{in particular,}\quad
	|A|\ll_n |E|^{\lfloor n/2\rfloor}.
	\]
\end{proposition}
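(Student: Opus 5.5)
We decompose $V=\bigoplus_{\chi\in X}V_\chi$ as in \eqref{eq:char_space_decomposition} and apply Lemma \ref{lem:orbit_counting} with a well-chosen family of characters. By Lemma \ref{lem:characters_forms}(b), the set $X$ is closed under $\chi\mapsto\chi^{-1}$, and $\dim V_\chi=\dim V_{\chi^{-1}}$. Split $X$ into the self-inverse characters $X_0=\{\chi\in X\ |\ \chi^2=1\}$ and the pairs $\{\chi,\chi^{-1}\}$ with $\chi\neq\chi^{-1}$. Choose a set $X_+$ of representatives, one from each such pair. Then
\[n=\sum_{\chi\in X_0}\dim V_\chi+2\sum_{\chi\in X_+}\dim V_\chi\geq |X_0|+2|X_+|.\]
In particular, $|X_+|\leq\lfloor n/2\rfloor$.

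Now apply Lemma \ref{lem:orbit_counting} with $s=|X_+|$. For each $\chi\in X_+$, pick a nonzero linear form $\ell_\chi\in V^*$ that is a coordinate on $V_\chi$ and vanishes on all other $V_\psi$. Such a form is $A$-equivariant with character $\chi$, since $a$ acts on $V_\chi$ by the scalar $\chi(a)$ and preserves every eigenspace. This yields $|A|\leq |K|\,|E|^{|X_+|}\leq |K|\,|E|^{\lfloor n/2\rfloor}$, where $K=\bigcap_{\chi\in X_+}\ker\chi$. Note that $|E|\geq 1$ because $E$ spans $V$ and $n\geq 1$.

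It remains to bound $|K|\leq 2^n$; this is the only real point. Take $c\in K$. Then $\chi(c)=1$ for $\chi\in X_+$, hence also $\chi^{-1}(c)=1$, so $c$ acts trivially on $V_\chi\oplus V_{\chi^{-1}}$ for every nonself-inverse pair. For $\chi\in X_0$ we have $\chi(c)^2=1$, so $\chi(c)=\pm1$. Since $A$ acts faithfully on $V=\bigoplus_{\chi\in X}V_\chi$, the element $c$ is determined by the tuple $(\chi(c))_{\chi\in X_0}\in\{\pm1\}^{X_0}$. The map $c\mapsto(\chi(c))_{\chi\in X_0}$ is therefore an injective homomorphism $K\to\{\pm1\}^{X_0}$, giving $|K|\leq 2^{|X_0|}\leq 2^n$.

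Combining these, $|A|\leq 2^n|E|^{\lfloor n/2\rfloor}$. The main subtlety is the one highlighted in the $\SO_3$ remark: the self-inverse characters cannot be absorbed into the orbit count without increasing the exponent. They are instead handled through the $2$-torsion bound on $K$, which costs only a constant depending on $n$. The argument never uses symmetry or alternation of $\beta$, only its non-degeneracy and $A$-invariance, so it covers the orthogonal and symplectic cases uniformly.
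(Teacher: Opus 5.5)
Your proposal is correct and follows the paper's proof essentially step for step. Your $X_+$ and $X_0$ are the paper's representatives $\chi_1,\dots,\chi_s$ of the non-self-inverse pairs and its set $Y$ of self-inverse characters, your forms $\ell_\chi$ are the paper's $\ell_i\circ\pi_{\chi_i}$, and your bound $|K|\le 2^{|X_0|}\le 2^n$ via the injective map $K\to\{\pm1\}^{X_0}$ is exactly the paper's Claim.
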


\begin{proof}
	Let $X=\{\chi\in\widehat{A}\ |\ V_\chi\neq \{0\}\}$. 
	Consider the direct sum
	decomposition
	\begin{equation}\label{eq:char_0_direct_sum_V_proof_orth_sympl_gps}
		V=\bigoplus_{\chi\in X} V_\chi.
	\end{equation}
	Lemma \ref{lem:characters_forms} implies that the set $Y_0:=\{\chi\in X\ |\ \chi^2\neq 1\}$ splits into pairs $\{\chi,\chi^{-1}\}$, and $\dim V_\chi=\dim V_{\chi^{-1}}$ in each pair. Let $\chi_1,\dots,\chi_s$ be a list consisting of one representative from each pair. Since $V_{\chi_i}\neq\{0\}$ for each $i$, we have
	$s\leq \sum_{i=1}^s\dim V_{\chi_i}$. The splitting of $Y_0$ into pairs as above yields
	\[2s\leq \sum_{\chi\in Y_0}\dim V_{\chi}=\dim\left(\bigoplus_{\chi\in Y_0} V_\chi\right) \leq n,\]
	giving $s\leq \lfloor n/2\rfloor$. 
	
	For each $i$, let
	$\pi_{\chi_i}:V\to V_{\chi_i}$
	be the projection coming from the character-space 
decomposition.
	Choose a nonzero linear form
	$\ell_i:V_{\chi_i}\to k$.
	The diagram below commutes for each $a\in A$:
	\[
	\xymatrix{
		V \ar[d]^{a} \ar[r]^{\pi_{\chi_i}} & V_{\chi_i} \ar[d]^{a} \ar[r]^{\ell_i} & k \ar[d]^{\chi_i(a)} \\
		V \ar[r]^{\pi_{\chi_i}} & V_{\chi_i} \ar[r]^{\ell_i} & k
	}
	\]
	Therefore, we can apply Lemma \ref{lem:orbit_counting} for the characters $(\chi_i)_{i=1}^s$ and the linear forms $(\ell_i\circ \pi_{\chi_i})_{i=1}^s$: letting
	\[
	K:=\bigcap_{i=1}^s \ker \chi_i,
	\]
	it remains to prove that $|K|\leq 2^n$. 
	
{\bf Claim.} Let $Y:=\{\chi\in X\ |\ \chi^2=1\}$. Then the map $K\rightarrow \{\pm 1\}^{|Y|}$, $a\mapsto (\chi(a))_{\chi\in Y}$ is injective.

\begin{proof}[Proof of Claim]
Let $a\in K$ be such that $\chi(a)=1$ for every $\chi\in Y$. Then $a$ acts trivially on
\[
\bigoplus_{\chi\in Y} V_\chi.
\]
Moreover, since $a\in K$, by definition, $a$ acts trivially on each $V_{\chi_i}$. Then $a$ also acts trivially on $V_{\chi_i^{-1}}$, since $\chi_i^{-1}(a)=\chi_i(a)^{-1}=1$. So $a$ acts trivially on every $V_\chi$ with $\chi\in Y_0$. Then $a$ acts trivially on every summand
in the character-space decomposition \eqref{eq:char_0_direct_sum_V_proof_orth_sympl_gps} of $V$, and hence on all of $V$, forcing $a=1$.
\end{proof}	

The claim implies
	\[|K|\leq 2^{|Y|}\leq 2^{|X|}\leq 2^n\]
	(the bound $|X|\leq n$ on the last step follows from \eqref{eq:char_0_direct_sum_V_proof_orth_sympl_gps} and $V_\chi\neq \{0\}$ for every $\chi\in X$).
\end{proof}

Combining the reduction to the abelian case with Lemma \ref{lem:sl_bound}
and Proposition \ref{prop:form_preserving_bound} completes the proof of
Theorem \ref{Thm:bound_char_0_rank}.

\section*{AI Disclosure} 
This work was developed through extensive prompt interactions with 
OpenAI's ChatGPT 5.5 Pro model. 
The proofs in the manuscript originated from model-generated proposals. The model also suggested the statement of Proposition \ref{prop:weighted-general} and identified the Brascamp--Lieb analogy that led to the formulation of 
Conjectures \ref{Conj:Brascamp_Lieb}, \ref{Conj:BL_complexity}, and \ref{Conj_BL_family}. 
In this workflow, the author designed the project, formulated the statements to be proved (including the major results and, in some cases, smaller intermediate steps), 
directed the model through many rounds of prompting,
directed literature searches, assessed approaches, encouraged promising ideas, discarded unpromising ones, corrected mistakes, simplified arguments, and wrote up the final text with feedback from the model.  

\section*{Acknowledgements} 

I am grateful to Johannes Schmitt (ETH) for introducing me to AI and for sharing practical tips on effective prompting. I also thank Thang Pham and Le Quang-Hung for discussions related to the stabilizer problem in $\SL_n$ and our previous work \cite{Vietnam_BLMS}.


\begin{thebibliography}{99}
	
\bibitem{BCCT}
J. Bennett, A. Carbery, M. Christ, and T. Tao,
\emph{The Brascamp--Lieb inequalities: finiteness, structure and extremals},
Geom. Funct. Anal. \textbf{17} (2008), no.~5, 1343--1415.

\bibitem{BCCT2}
J. Bennett, A. Carbery, M. Christ, and T. Tao,
\emph{Finite bounds for H\"older--Brascamp--Lieb multilinear inequalities},
Math. Res. Lett. \textbf{17} (2010), no.~4, 647--666.

\bibitem{mult_linear_Brascamp_Lieb}
M. Christ, J. Demmel, N. Knight, T. Scanlon, and K. Yelick,
``On multilinear inequalities of H\"older--Brascamp--Lieb type for torsion-free discrete Abelian groups,''
\textit{Journal of Logic and Analysis} \textbf{16} (2024), Article 4.

\bibitem{Duncan}
J.~Duncan,
\emph{An algebraic Brascamp--Lieb inequality},
J. Geom. Anal. \textbf{31} (2021), 10136--10163.

\bibitem{Johnsrude}
B.~Johnsrude,
\emph{Discrete nonlinear H\"older--Brascamp--Lieb inequalities:
	a local approach},
arXiv:2607.21864, 2026.

\bibitem{KST}
T. K\H{o}v\'ari, V.~T. S\'os, and P. Tur\'an, 
On a problem of K. Zarankiewicz.
\textit{Colloquium Mathematicum}, \textbf{3} (1954), 50--57.

\bibitem{Yufei_Zhao}
Y.~Zhao,
\emph{Graph Theory and Additive Combinatorics:
	Exploring Structure and Randomness},
Cambridge University Press, Cambridge, 2023.
An author's version is available at
\url{https://yufeizhao.com/gtacbook/gtacbook.pdf}.

\bibitem{Do}
T. T. Do,
\emph{Zarankiewicz's problem for semi-algebraic hypergraphs},
J. Combin. Theory Ser. A \textbf{158} (2018), 621--642.
%doi:10.1016/j.jcta.2018.04.007.

	\bibitem{LoomisWhitney1949} 
L.~H. Loomis and H.~Whitney,
\emph{An inequality related to the isoperimetric inequality},
Bulletin of the American Mathematical Society \textbf{55} (1949), 961--962.

\bibitem{Finner1992}
H.~Finner,
\emph{A generalization of H\"older's inequality and some probability inequalities},
Ann. Probab. \textbf{20} (1992), no.~4, 1893--1901.

\bibitem{BCELM}
F.~Barthe, D.~Cordero-Erausquin, M.~Ledoux, and B.~Maurey,
\emph{Correlation and Brascamp--Lieb inequalities for Markov semigroups},
Int. Math. Res. Not. {\bf 2011} (2011), no.~10, 2177--2216.

\bibitem{pinned_distances}
B.~Murphy, G.~Petridis, T.~Pham, M.~Rudnev, and S.~Stevens,
\emph{On the pinned distances problem in positive characteristic},
J. Lond. Math. Soc. \textbf{105} (2022), no.~1, 469--499.

\bibitem{Vietnam_BLMS}
T. Pham, Le Quang-Hung, and K. Slavov, \emph{Sets preserved by a large subgroup of the special linear group}, Bull. London Math. Soc., {\bf 58} (2026): e70298.
%\url{https://doi.org/10.1112/blms.70298}

\bibitem{Pendavingh}
R.~Pendavingh,
\emph{Bounds on the number of cells and the dimension of the Dressian},
arXiv:2408.09466, 2024.

\bibitem{Jordan1878} 
C. Jordan,
\emph{M\'emoire sur les \'equations diff\'erentielles lin\'eaires \`a
	int\'egrale alg\'ebrique},
Journal f\"ur die reine und angewandte Mathematik \textbf{84} (1878),
89--215.

\bibitem{LarsenPink}
M.~J. Larsen and R.~Pink,
\emph{Finite subgroups of algebraic groups},
J. Amer. Math. Soc. \textbf{24} (2011), no.~4, 1105--1158.

\bibitem{Duyan_Halasi_Maroti_Adv}
H. Duyan, Z. Halasi and A. Maróti,
\emph{A proof of Pyber's base size conjecture},
Adv. Math. \textbf{331} (2018), 720--747.

\bibitem{Conrad_FiniteAbelianCharacters}
Keith Conrad,
\emph{Characters of finite abelian groups},
Theorem 9.3.
Available at \url{https://kconrad.math.uconn.edu/blurbs/grouptheory/charthy.pdf}.

\bibitem{MagaardMalleTiep}
K.~Magaard, G.~Malle, and P.~H.~Tiep,
\emph{Irreducibility of tensor squares, symmetric squares and alternating
	squares},
Pacific J. Math. \textbf{202} (2002), no.~2, 379--427.


\bibitem{Moonen}
B.~Moonen,
\emph{Introduction to Algebraic Geometry},
lecture notes based on the Mastermath course Algebraic Geometry, Spring 2013,
version of February 16, 2015.
Available at \url{https://www.math.ru.nl/~bmoonen/Lecturenotes/alggeom.pdf}.

\bibitem{StollLinearAlgebraII}
M.~Stoll,
\emph{Linear Algebra II},
course notes, with additions by R.~van Luijk.
Available at \url{https://pub.math.leidenuniv.nl/~luijkrmvan/ps/LinearAlgebra2.pdf}.
See Theorem~8.31.

\bibitem{CameronClassicalGroups}
P.~J. Cameron,
\emph{Classical Groups},
Chapter~6: Orthogonal groups.
Available at \url{https://webspace.maths.qmul.ac.uk/p.j.cameron/class_gps/ch6.pdf}.



\end{thebibliography}
\end{document}